\newcommand{\Z}{{\mathbb{Z}}}
\newcommand{\C}{{\mathbb{C}}}
\newcommand{\R}{{\mathbb{R}}}
\newcommand{\ird}{ I\times\R^d}
\newcommand{\eps}{{\varepsilon}}
\newcommand{\gnj}{g_n^j}
\newcommand{\tnj}{t_n^j}
\newcommand{\wnJ}{w_n^J}
\theoremstyle{plain}
\newtheorem{theorem}{Theorem}
\newtheorem{proposition}[theorem]{Proposition}
\newtheorem{lemma}[theorem]{Lemma}
\newtheorem{corollary}[theorem]{Corollary}
\newtheorem{conjecture}[theorem]{Conjecture}
\theoremstyle{definition}
\newtheorem{definition}[theorem]{Definition}
\newtheorem{remark}[theorem]{Remark}
\newtheorem*{remarks}{Remarks}
\newcounter{smalllist}
\newenvironment{CI}{\begin{list}{{\ $\bullet$\ }}{%
\setlength{\topsep}{0mm}\setlength{\parsep}{0mm}\setlength{\itemsep}{0mm}%
\setlength{\labelwidth}{0mm}\setlength{\itemindent}{0mm}\setlength{\leftmargin}{0mm}%
\setlength{\labelsep}{0mm}}}{\end{list}}
\numberwithin{equation}{section} \numberwithin{theorem}{section}
\begin{document}

\title[Energy-supercritical NLS: critical $\dot H^s$-bounds imply scattering]{Energy-supercritical NLS: \\critical $\dot H^s$-bounds imply scattering}
\author{Rowan Killip}
\address{University of California, Los Angeles, CA}
\author{Monica Visan}
\address{University of Chicago, Chicago, IL}

\begin{abstract}
We consider two classes of defocusing energy-supercritical nonlinear Schr\"odinger equations in dimensions $d\geq 5$.
We prove that if the solution $u$ is apriorily bounded in the critical Sobolev space, that is, $u\in L_t^\infty \dot H^{s_c}_x$,
then $u$ is global and scatters.
\end{abstract}

\maketitle

\section{Introduction}
We consider the initial-value problem for the defocusing nonlinear Schr\"odinger equation in dimension $d\geq 5$,
\begin{equation}\label{nls}
\begin{cases}
\ iu_t=-\Delta u+F(u)\\
\ u(t=0,x)=u_0(x),
\end{cases}
\end{equation}
where the nonlinearity $F(u)= |u|^pu$ is energy-supercritical, that is, $p>\tfrac4{d-2}$.

The class of solutions to \eqref{nls} is left invariant by the scaling
\begin{equation}\label{scaling}
u(t,x)\mapsto \lambda^\frac2{p} u(\lambda^2 t,\lambda x).
\end{equation}
This defines a notion of \emph{criticality}.  More precisely, a quick computation shows that the only homogeneous $L_x^2$-based Sobolev space
left invariant by the scaling is $\dot H^{s_c}_x(\R^d)$, where the \emph{critical regularity} is $s_c:=\frac d2-\frac 2p$.
If the regularity of the initial data to \eqref{nls} is higher/lower than the critical regularity $s_c$, we call the problem
\emph{subcritical/supercritical}.

We consider \eqref{nls} for initial data belonging to the critical homogeneous Sobolev space, that is, $u_0\in\dot H^{s_c}_x(\R^d)$, in two regimes
where $s_c>1$ and $d\geq 5$.  We prove that any maximal-lifespan solution that is uniformly bounded (throughout its lifespan) in $\dot H^{s_c}_x(\R^d)$
must be global and scatter.  We were prompted to consider this problem by a recent preprint of Kenig and Merle \cite{kenig-merle:wave sup} which
proves similar results for radial solutions to the nonlinear wave equation in $\R^3$.

Let us start by making the notion of a solution more precise.

\begin{definition}[Solution]\label{D:solution} A function $u: I \times \R^d \to \C$ on a non-empty time interval $0\in I \subset \R$
is a \emph{solution} (more precisely, a strong $\dot H^{s_c}_x(\R^d)$ solution) to \eqref{nls} if it lies in the class $C^0_t \dot
H^{s_c}_x(K \times \R^d) \cap L^{p(d+2)/2}_{t,x}(K \times \R^d)$ for all compact $K \subset I$, and obeys the Duhamel formula
\begin{align}\label{old duhamel}
u(t) = e^{it\Delta} u(0) - i \int_{0}^{t} e^{i(t-s)\Delta} F(u(s))\, ds
\end{align}
for all $t \in I$.  We refer to the interval $I$ as the \emph{lifespan} of $u$. We say that $u$ is a \emph{maximal-lifespan
solution} if the solution cannot be extended to any strictly larger interval. We say that $u$ is a \emph{global solution} if $I= \R$.
\end{definition}

We define the \emph{scattering size} of a solution to \eqref{nls} on a time interval $I$ by
$$
S_I(u):= \int_I \int_{\R^d} |u(t,x)|^{\frac{p(d+2)}2}\, dx \,dt.
$$

Associated to the notion of solution is a corresponding notion of blowup.  As we will see in Theorem~\ref{T:local} below, this
precisely corresponds to the impossibility of continuing the solution.

\begin{definition}[Blowup]\label{D:blowup}
We say that a solution $u$ to \eqref{nls} \emph{blows up forward in time} if there exists a time $t_1 \in I$ such that
$$ S_{[t_1, \sup I)}(u) = \infty$$
and that $u$ \emph{blows up backward in time} if there exists a time $t_1 \in I$ such that
$$ S_{(\inf I, t_1]}(u) = \infty.$$
\end{definition}

We subscribe to the following conjecture.

\begin{conjecture}\label{C:conj}
Let $d\geq 1$, $p\geq \frac 4d$, and let $u:I\times\R^d\to\C$ be a maximal-lifespan solution to \eqref{nls} such that
$u\in L_t^\infty \dot H^{s_c}_x$.  Then $u$ is global and moreover,
\begin{equation}\label{scattering finite}
S_\R(u)\leq C\bigl(\|u\|_{L_t^\infty \dot H^{s_c}_x}\bigr),
\end{equation}
for some function $C:[0,\infty)\to [0,\infty)$.
\end{conjecture}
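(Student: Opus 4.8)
The strategy I would follow is the concentration-compactness--rigidity paradigm of Kenig and Merle, with the twist, forced by the energy-supercritical scaling, that neither the mass nor the energy lives at the critical regularity $\dot H^{s_c}_x$, so that a substitute for a coercive conservation law must be manufactured by hand. As preparation I would first develop a local well-posedness and stability theory at regularity $\dot H^{s_c}_x$ (see Theorem~\ref{T:local}): small-data global existence and scattering, the equivalence of non-continuability with $S_I(u)=\infty$, and a long-time perturbation lemma controlling a true solution by an approximate one with small error in critical Strichartz spaces. It is here that the structure of $F(u)=|u|^pu$ matters: carrying $s_c$ (fractional) derivatives through the nonlinearity via chain- and product-rule estimates requires $p$ not to be too small relative to $s_c$, and this is exactly what confines the argument to the two classes of exponents appearing in the statement.

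Assuming the conclusion fails, there is a critical value of $\|u\|_{L^\infty_t\dot H^{s_c}_x}$, attained in the limit by a sequence of solutions whose scattering size diverges. Applying a linear profile decomposition for $e^{it\Delta}$ in $\dot H^{s_c}_x$ to (time-translated) initial data, and using the stability theory to build the nonlinear flow from the profiles and to discard all but the worst one, I would extract a maximal-lifespan solution $u\not\equiv 0$ that blows up in both time directions, has $\inf_t\|u(t)\|_{\dot H^{s_c}_x}>0$, and whose orbit is precompact in $\dot H^{s_c}_x$ modulo the scaling and translation symmetries; concretely there exist $N(t)>0$ and $x(t)\in\R^d$ with the associated frequency-and-space localization. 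Using the local constancy of $N(t)$ together with the scaling symmetry I would then reduce to two scenarios: a \emph{frequency cascade}, in which $N(t)\le 1$ for all $t$ and $\liminf_{t}N(t)=0$, and a \emph{soliton-like} solution, in which $N(t)\sim 1$; in particular finite-time blowup, which would force $N(t)\to\infty$, is excluded.

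The central --- and, I expect, hardest --- step is to upgrade $u$ from merely $\dot H^{s_c}_x$ to something carrying useful conserved or monotone quantities. Since $u$ is global in the relevant direction and almost periodic, the free-evolution term in Duhamel's formula may be discarded in a suitable weak sense (the ``reduced'' Duhamel formula). Feeding this into a bootstrap built from Littlewood--Paley theory, frequency-localized Strichartz and bilinear estimates, and the bound $N(t)\le 1$, I would show that $u\in L^\infty_t\dot H^s_x$ for every $s$ ranging from $s_c$ down to (indeed slightly past) $0$. Interpolating with the given $\dot H^{s_c}_x$ bound then yields, in particular, finite conserved mass and --- via Sobolev embedding, using $p>\frac4{d-2}$ --- finite energy, as well as $u\in L^\infty_t\dot H^{1/2}_x$.

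With this regularity in hand the two enemies fall. In the cascade case $u\in L^2_x$ has finite conserved mass, yet along a sequence $t_n$ with $N(t_n)\to 0$ the $\dot H^{s_c}_x$-content of $u(t_n)$, which stays bounded below, sits at frequencies $\sim N(t_n)$, forcing $\|u(t_n)\|_{L^2_x}^2\gtrsim N(t_n)^{-2s_c}\to\infty$ --- a contradiction. In the soliton-like case $u$ is a global, finite-mass, $\dot H^{1/2}_x$ solution that does not disperse; invoking a Morawetz-type monotonicity inequality (the Lin--Strauss inequality, after showing $|x(t)|=o(|t|)$, or the translation-invariant interaction Morawetz estimate available in the dimensions $d\ge5$ at issue) produces a spacetime norm that is finite by the a priori bounds but, by almost periodicity, grows at least linearly in the length of the lifespan --- again a contradiction. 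This closes the argument.
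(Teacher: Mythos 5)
You are attempting to prove what the paper labels a \emph{Conjecture} --- the authors explicitly do not claim it in the stated generality ($d\geq 1$, $p\geq 4/d$). They prove only the two restricted cases announced as Theorems~\ref{T:main cubic} and~\ref{T:main} (cubic nonlinearity, $d\geq5$; and general $p$ with $d\geq5$ and $s_c$ confined by~\eqref{sc}). Your own phrase ``the two classes of exponents appearing in the statement'' suggests you are conflating the conjecture with those theorems; in any case, no argument along these lines closes the conjecture for, say, $d\leq 4$, where the double Duhamel trick fails, or for $s_c$ violating the smoothness condition $s_c<1+p$, where the local theory breaks. A review of a proof ``of the conjecture'' must therefore begin by noting that there is nothing in the paper to compare it to.

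Even read as a sketch of the theorems the paper does prove, your reduction to scenarios is wrong in a way that wrecks the rigidity step. You normalize to $N(t)\leq 1$ and propose a high-to-low cascade ($\liminf N(t)=0$) and a soliton, claiming finite-time blowup ``is excluded'' because it forces $N(t)\to\infty$. That is not how the reduction works. The paper (following \cite{Berbec}, Theorem~\ref{T:enemies}) normalizes to $\inf_t N(t)\geq 1$ and keeps \emph{three} enemies: finite-time blowup, a soliton, and a \emph{low-to-high} cascade ($\limsup N(t)=\infty$). Finite-time blowup is not discarded by the reduction; it is ruled out separately in Section~\ref{S:self-similar} by a Strichartz/Duhamel argument exploiting the finite blowup time. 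The choice $N(t)\geq 1$ is forced by the structure of the problem: in the energy-supercritical regime all known conserved quantities (mass, energy) live at regularities $s<s_c$, so the only useful direction to move is \emph{down} in regularity (i.e.\ improved spatial decay), which is what the negative-regularity argument of Section~\ref{S:neg} delivers --- and which requires $\inf_t N(t)\geq 1$ to choose the frequency cutoff $N_0$ in \eqref{Hs small} uniformly in $t$. Your high-to-low cascade with $N(t_n)\to 0$ would instead demand control at regularity \emph{above} $s_c$; there is no conserved quantity there, and the double-Duhamel bootstrap you invoke cannot be started because the almost-periodicity bound fails to give a $t$-uniform low-frequency cutoff. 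Thus your contradiction ``$\|u(t_n)\|_{L^2}^2\gtrsim N(t_n)^{-2s_c}\to\infty$'' presupposes exactly the $u\in L^\infty_t L^2_x$ regularity that your scenario makes unprovable.

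The remaining architecture of your proposal --- critical-regularity local and stability theory, linear profile decomposition, extraction of a minimal almost-periodic blowup solution, reduced Duhamel, and Morawetz-type monotonicity to kill the soliton --- does match Sections~\ref{S:Local theory}--\ref{S:Soliton}. But with the wrong scenario reduction the rigidity step does not close, and with the conjecture's full range of $(d,p)$ the local theory and the double Duhamel trick are not available.
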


Our primary goal in this paper is to demonstrate how techniques developed to treat the energy-critical NLS can be applied to
Conjecture~\ref{C:conj} in the regime $s_c\geq 1$, although some of the arguments we will use were developed first in the mass-critical setting.
As we will describe, the appearance of the $L_t^\infty \dot H^{s_c}_x$ norm on the right-hand side of \eqref{scattering finite} renders
illusory the supercriticality of the equation.  The famed supercriticality of Navier--Stokes is the fact that the problem is supercritical with respect
to all quantities controlled by (known) conservation/monotonicity laws; see, for instance, the discussion in \cite{Tao:NS}.  In the context
of Conjecture~\ref{C:conj}, the assumption that the solution is uniformly bounded in $\dot H^{s_c}_x$ plays the role of the missing
critical conservation law.  It is not surprising therefore that techniques developed to treat problems with true critical conservation laws
should be applicable in this setting.  Next we review some of this work before describing the particular contribution of this paper.

Mass and energy are the only known coercive conserved quantities for NLS; hence, the corresponding critical NLS equations have received the most
attention. In the mass-critical case, the critical regularity is $s_c=0$ (i.e. $p=\frac 4d$) and the scaling \eqref{scaling} leaves the mass invariant,
that is, the conserved quantity
$$
M(u):=\int_{\R^d} |u(t,x)|^2\, dx.
$$
Similarly, in the energy-critical case, the critical regularity is $s_c=1$ (i.e. $p=\frac4{d-2}$, $d\geq 3$) and the scaling \eqref{scaling} leaves
invariant the energy,
$$
E(u):=\int_{\R^d} \bigl[|\nabla u(t,x)|^2 + \tfrac1{2+p}|u(t,x)|^{2+p} \bigr]\, dx,
$$
which is also a conserved quantity for \eqref{nls}.

In the defocusing energy-critical case, it is known that all $\dot H^1_x$ initial data lead to global solutions with finite scattering size.
Indeed, this was proved by Bourgain \cite{borg:scatter}, Grillakis \cite{grillakis}, and Tao \cite{tao: gwp radial} for spherically-symmetric
initial data, and by Colliander--Keel--Staffilani--Takaoka--Tao \cite{ckstt:gwp}, Ryckman--Visan \cite{RV}, and Visan \cite{thesis:art, Monica:thesis}
for arbitrary initial data.  For results in the focusing case see \cite{kenig-merle, Berbec}.

Unlike for the nonlinear wave equation (NLW), all known monotonicity formulae for NLS (that is, Morawetz-type inequalities) scale differently than the
energy.  Ultimately, the ingenious induction on energy technique of Bourgain (and the concomitant identification of bubbles) introduces a length
scale to the problem which makes it possible to use non-invariantly scaling monotonicity formulae.  All subsequent work has built upon this
foundational insight.

The Lin--Strauss Morawetz inequality used by Bourgain has $\dot H^{1/2}_x$-scaling and is best adapted to the spherically-symmetric problem.
To treat the non-radial problem, Colliander, Keel, Staffilani, Takaoka, and Tao introduced an interaction Morawetz inequality; this has
$\dot H^{1/4}_x$-scaling, which is even further from the $\dot H^1_x$-scaling of the energy-critical problem.  As the reader may notice, in both cases
the regularity associated to the monotonicity formulae is lower than the critical regularity of the equation.  This is the philosophical
basis of our belief that the techniques developed for treating the energy-critical problem should be broadly applicable to Conjecture~\ref{C:conj}
whenever $s_c\geq 1/2$.  In this paper, we will by no means complete this program, but rather have chosen to present some selected results
that give the flavor of our main thesis without becoming swamped with technicalities.

We turn our attention now to the defocusing mass-critical NLS.  In this case, Conjecture~\ref{C:conj} has been proved for spherically-symmetric
$L_x^2$ initial data in all dimensions $d\geq 2$; see \cite{KTV, KVZ, TVZ:sloth}.  For a proof of the corresponding conjecture in the focusing case
(for spherically-symmetric $L_x^2$ data with mass less than that of the ground state and $d\geq 2$) see \cite{KTV, KVZ}.  At present, we do not
know how to deal with the Galilean symmetry possessed by this equation, except through suppressing it by assuming spherical symmetry.
We also note that in this case, one needs to prove additional regularity (rather than decay) to gain access to the known monotonicity formulae.

The first instance of Conjecture~\ref{C:conj} to fall at non-conserved critical regularity was the case $s_c=1/2$ in dimension $d=3$.
This was achieved by Kenig and Merle, \cite{kenig-merle:1/2}.  They used the concentration-compactness technique in the manner they pioneered in
\cite{kenig-merle} together with the Lin--Strauss Morawetz inequality.

The present paper is motivated by a recent preprint of Kenig and Merle, \cite{kenig-merle:wave sup}, who consider spherically-symmetric solutions
to a class of defocusing energy-supercritical nonlinear wave equations in three dimensions.  They prove that if the solution is known to be
uniformly bounded in the critical $\dot H^s_x$-space throughout its lifetime, then the solution must be global and it must scatter;
this is precisely the NLW analogue of Conjecture~\ref{C:conj}.

Earlier, we drew a parallel to the Navier--Stokes equation.  The most natural analogue of Conjecture~\ref{C:conj} in that setting is to show
that boundedness of a critical norm implies global regularity.  Such results are known; see \cite{Escauriaza} and the references therein.

In this paper we prove Conjecture~\ref{C:conj} in several instances of defocusing energy-supercritical nonlinear
Schr\"odinger equations in dimensions $d\geq 5$ for arbitrary (not necessarily spherically-symmetric) initial data.
The approach we take is modeled after \cite{Berbec}, which considers the energy-critical problem in dimensions $d\geq 5$ in both
the defocusing and focusing cases.

We will consider two different settings.  In the first one, the nonlinearity is cubic, that is, $F(u)=|u|^2u$, and hence the
critical regularity is $s_c=\frac{d-2}2$.  Note that in dimension $d\geq 5$, this problem is energy-supercritical.

The second problem we consider is that of general (not necessarily polynomial) energy-supercritical nonlinearities in dimension $d\geq 5$,
that is, $p>\frac4{d-2}$.  In this case, we also impose some additional constraints on the power $p$.  First, we ask that the nonlinearity
obeys a certain smoothness condition; more precisely, we ask that $s_c<1+p$, which is equivalent to $2p^2-p(d-2)+4>0$.  The role of this
constraint is to allow us to take $s_c$-many derivatives of the nonlinearity $F(u)$; this is important in the development of the local theory.
Moreover, in Section~\ref{S:neg}, we require that $s_c$ and $p$ obey some further constraints.  Together, these amount to
\begin{equation}\label{sc}
\begin{cases}
\ 1< s_c< \tfrac32 \quad &\text{for } d=5,6\\
\ 1< s_c<\tfrac{d+2-\sqrt{(d-2)^2-16}}4 \quad &\text{for } d\geq 7.
\end{cases}
\end{equation}
One should not view \eqref{sc} as a major constraint on the size of the critical regularity $s_c$.  Indeed, we claim that an interpolation
of the techniques we present to treat the two problems outlined above can be used to treat any defocusing energy-supercritical NLS (with
the solution apriorily bounded in $\dot H^{s_c}_x$) in dimensions $d\geq 5$, without any additional constraint on $s_c$ if the power
$p$ is an even integer and requiring merely the smoothness condition $s_c<1+p$ for arbitrary powers $p$.  However, for the sake of readability, we chose
not to work in this greater generality.

Our main results are the following:

\begin{theorem}[Spacetime bounds -- the cubic]\label{T:main cubic}
Let $d\geq 5$ and $F(u)=|u|^2u$.  Let $u:I\times\R^d\to\C$ be a maximal-lifespan solution to \eqref{nls}
such that $u\in L_t^\infty \dot H^{\frac{d-2}2}_x(I\times\R^d)$.  Then $u$ is global and moreover,
$$
S_\R(u)\leq C\bigl(\|u\|_{L_t^\infty \dot H^{\frac{d-2}2}_x}\bigr).
$$
\end{theorem}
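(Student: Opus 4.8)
The proof follows the now-standard concentration-compactness-plus-rigidity paradigm, and I would organize it exactly along the lines of \cite{Berbec}. The plan is to argue by contradiction: if Theorem~\ref{T:main cubic} fails, then there is a threshold value
$$
E_c := \sup \bigl\{ S_I(u) : u \text{ a solution to \eqref{nls} with } \|u\|_{L_t^\infty \dot H^{\frac{d-2}2}_x} \le E \bigr\}
$$
that first becomes infinite at some finite level $E$. First I would develop the requisite local theory: local well-posedness, stability/perturbation theory, and a small-data global theory in $\dot H^{s_c}_x$, all in suitable Strichartz-type spaces adapted to the $s_c$-many derivatives of the nonlinearity (here the cubic power makes the chain rule and fractional calculus manageable, since $s_c = \frac{d-2}{2}$ and one needs $s_c < 1+p = 3$, which holds for $d \le 7$; for $d \ge 8$ one works with the equation for a suitable derivative or uses the exotic Strichartz spaces of \cite{Berbec}). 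Then a linear profile decomposition adapted to $\dot H^{s_c}_x$ and the Schr\"odinger propagator, combined with the stability theory, upgrades the failure of the theorem to the existence of an \emph{almost periodic} (modulo the scaling and translation symmetries) minimal blowup solution: a solution $u$ with $\|u\|_{L_t^\infty \dot H^{s_c}_x}$ at the critical level, $S_{[0,\sup I)}(u) = S_{(\inf I,0]}(u) = \infty$, and such that the orbit $\{ \lambda(t)^{-2/p} u(t, \lambda(t)^{-1}(\cdot - x(t))) \}$ is precompact in $\dot H^{s_c}_x$ for some frequency scale function $N(t) = \lambda(t)^{-1}$ and spatial center $x(t)$.

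Next I would run the standard reductions on this minimal blowup solution. Using the local theory one shows $N(t)$ cannot change too quickly on intervals where it does not oscillate wildly, and one can normalize on each "characteristic" subinterval. The heart of the matter is then to rule out such a solution, and this is where two enemies must be eliminated. The first enemy is a solution with $N(t) \to \infty$ (or $N(t)$ staying large near a finite endpoint): this is a rapid-frequency-cascade or self-similar-type scenario, and I would exclude it by exploiting the $\dot H^{s_c}_x$ bound together with a reduced-Duhamel / no-waste argument to extract additional regularity or additional decay, showing the solution must actually lie in $\dot H^s_x$ for a range of $s$ that forces the mass (or some conserved quantity) to vanish, hence $u \equiv 0$. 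The second enemy is the quasi-soliton, where $N(t)$ stays comparable to a fixed constant (say $N(t) \equiv 1$) on a half-line; here one invokes a long-time Morawetz or interaction Morawetz estimate. Since the available Morawetz inequality scales like $\dot H^{1/2}_x$ and the problem is at regularity $s_c > 1$, the key is that the $L_t^\infty \dot H^{s_c}_x$ a priori bound sits \emph{above} the Morawetz scaling, so the Morawetz quantity is controlled and its boundedness contradicts the lower bound coming from the localized-in-space mass of a quasi-soliton. This is the point made in the introduction: the $\dot H^{s_c}_x$ assumption "renders illusory the supercriticality."

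The main obstacle, in my view, is the frequency-cascade / regularity step rather than the Morawetz step. In the energy-critical case one has a genuine conserved energy to play off against; here one only has the \emph{a priori assumed} $\dot H^{s_c}_x$ bound, with no conservation law at that regularity and no \emph{a priori} control at any lower regularity. So the negative-regularity argument of Section~\ref{S:neg} must be done carefully: one bootstraps, via the compactness of the orbit and a frequency-localized Duhamel expansion, to show the solution enjoys control in $\dot H^s_x$ for $s$ slightly below $s_c$, iterates down, and ultimately reaches $L^2_x$ or another coercive quantity that must then be zero. The technical constraint \eqref{sc} on $s_c$ is precisely what makes this iteration close, and verifying the nonlinear estimates (fractional chain rule, paraproduct bounds) along the way, while routine in spirit, is where most of the work lies. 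Once the minimal blowup solution is shown to be trivial, the contradiction is complete and global spacetime bounds with the asserted dependence on $\|u\|_{L_t^\infty \dot H^{(d-2)/2}_x}$ follow; scattering is then a standard consequence of the global spacetime bound via the Duhamel formula and Strichartz estimates.
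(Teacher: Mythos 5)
Your overall strategy is the right one and matches the paper's closely: concentration-compactness plus rigidity, reduction to an almost periodic minimal blowup solution, and then rigidity via extra decay and a Morawetz-type estimate. But a few details of how you have organized the rigidity step are off in ways that matter.

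First, the enemy trichotomy. The paper separates the rigidity step into \emph{three} scenarios, not two: finite-time blowup, soliton with $N(t)\equiv 1$, and low-to-high cascade with $\inf N(t)\geq 1$ and $\limsup N(t)=\infty$. You merge the first and third into one ``rapid-frequency-cascade or self-similar'' enemy. The reason the paper keeps them apart is that the finite-time blowup case does \emph{not} require negative regularity; it is dispatched directly by a short Strichartz/Duhamel argument exploiting the finiteness of the blowup time (one shows $\||\nabla|^{s_c-1}u(t)\|_2\lesssim_u (T-t)^{1/2}$ and iterates down, then invokes conservation of energy or mass). Negative regularity via the double-Duhamel trick is what handles the two \emph{global} scenarios.

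Second, and more importantly, your description of the soliton step contains a genuine error. You write that because the interaction Morawetz inequality scales below $\dot H^{s_c}_x$ and the a priori bound sits above that scaling, ``the Morawetz quantity is controlled'' by the a priori bound. That is false, and it is precisely the subtlety the negative-regularity section is built to address. The interaction Morawetz quantity is controlled by $\|u\|_{L^\infty_t L^2_x}^3\|\nabla u\|_{L^\infty_t L^2_x}$, which requires \emph{decay}, i.e.\ $u\in L^\infty_t L^2_x$ (and $\dot H^1_x$). A bound in $\dot H^{s_c}_x$ with $s_c>1$ gives you high regularity but says nothing about low-frequency decay, so it does not control any Morawetz-type functional. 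The paper explicitly uses the negative regularity $u\in L^\infty_t \dot H^{-\eps}_x$ (hence $u\in L^\infty_t H^1_x$ after interpolation) as a \emph{prerequisite} for the soliton exclusion, not just for the cascade. Your proposal treats negative regularity and Morawetz as parallel tools for two different enemies; in fact negative regularity must come first and feeds into both the cascade \emph{and} the soliton arguments.

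Third, two more minor points. (i) For the cubic $F(u)=|u|^2u$ the power $p=2$ is an even integer, so $F$ is a polynomial in $(u,\bar u)$ and the smoothness constraint $s_c<1+p$ is simply not needed; the paper's Corollary \ref{C:s deriv} applies for all $s\geq 0$ when $p$ is even. Your remark that the constraint fails for $d\geq 8$ and requires ``exotic Strichartz'' is a red herring for the cubic theorem --- the constraint \eqref{sc} is imposed only for the non-polynomial Theorem~\ref{T:main}, not Theorem~\ref{T:main cubic}. (ii) The negative-regularity bootstrap does not go directly ``$\dot H^{s_c}_x \to \dot H^{s}_x$, $s<s_c$.'' It proceeds in two stages: first one breaks scaling in an $L^q_x$ \emph{Lebesgue} space ($2<q<dp/2$) via the recurrence/Gronwall argument, and only then uses a double Duhamel trick (which is where $d\geq 5$ is genuinely used to make the time integrals converge) to upgrade that to $L^\infty_t\dot H^{s_c-s_0+}_x$, iterating from there. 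Your sketch elides the Lebesgue-space step and the double Duhamel device entirely, and these are the technical heart of Section~\ref{S:neg}.
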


\begin{theorem}[Spacetime bounds]\label{T:main}
Let $d\geq 5$ and assume the critical regularity $s_c$ satisfies \eqref{sc}.  Let $u:I\times\R^d\to\C$ be a maximal-lifespan solution to \eqref{nls}
such that $u\in L_t^\infty \dot H^{s_c}_x(I\times\R^d)$.  Then $u$ is global and moreover,
$$
S_\R(u)\leq C\bigl(\|u\|_{L_t^\infty \dot H^{s_c}_x}\bigr).
$$
\end{theorem}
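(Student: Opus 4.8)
\emph{Outline of the proof.} The plan is to run the concentration-compactness-and-rigidity method of Kenig and Merle, in the form it has taken for the energy-critical NLS in high dimensions and modeled after \cite{Berbec}, with the hypothesis $u\in L_t^\infty\dot H^{s_c}_x$ standing in for the missing critical conservation law. The first task is the local theory: one should establish local well-posedness in $\dot H^{s_c}_x$, small-data global well-posedness and scattering, and --- most importantly --- a long-time perturbation (stability) theory measured in the scattering norm $L^{p(d+2)/2}_{t,x}$. These rest on Strichartz estimates together with estimates for $|\nabla|^{s_c}F(u)$ obtained from the fractional chain and product rules; the hypothesis $s_c<1+p$ built into \eqref{sc} is precisely what lets one distribute $s_c$ derivatives across the nonlinearity. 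Granting this, set $L(E):=\sup\{S_I(u):\ u\text{ a maximal-lifespan solution with }\|u\|_{L_t^\infty\dot H^{s_c}_x}\le E\}$; the small-data theory gives $L(E)<\infty$ for small $E$, and if Theorem~\ref{T:main} were false there would be a critical threshold $E_c\in(0,\infty)$ with $L(E)<\infty$ exactly for $E<E_c$. Feeding a linear profile decomposition in $\dot H^{s_c}_x$ (adapted to the scaling \eqref{scaling} and to translations) into the stability theory yields a Palais--Smale condition for minimizing sequences, and hence a minimal counterexample: a maximal-lifespan solution $u$ with $\|u\|_{L_t^\infty\dot H^{s_c}_x}=E_c$, with $S$ infinite in at least one time direction, whose orbit is precompact in $\dot H^{s_c}_x$ modulo the symmetries; that is, there are $N:I\to(0,\infty)$ and $x:I\to\R^d$ for which $\{N(t)^{-2/p}u(t,\,x(t)+N(t)^{-1}\cdot):t\in I\}$ is precompact in $\dot H^{s_c}_x$. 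A further reduction of the frequency-scale function then leaves three scenarios to preclude: a finite-time blowup solution; a global, ``soliton-like'' solution with $N(t)\equiv1$; and a low-to-high frequency cascade, along which $N(t_n)\to0$ for a sequence of times.

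The crux --- and the step I expect to be the main obstacle --- is to upgrade the \emph{a priori} information dramatically, showing that any such almost periodic solution in fact lies in $L^2_x$, uniformly in time. The engine is the double Duhamel trick: expanding $u(t)$ through the Duhamel formula \eqref{old duhamel} both forward from $\inf I$ and backward from $\sup I$, one uses almost periodicity to see that the free evolutions converge weakly to zero, so $u(t)$ is represented purely by its nonlinear self-interaction; pairing the two representations and estimating the resulting expression --- bilinear in the Duhamel integral --- in negative-order Sobolev spaces, with the \emph{a priori} $\dot H^{s_c}_x$ bound controlling the high frequencies, one gains a fixed increment of regularity below $s_c$, and then iterates this gain down to $L^2_x$. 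Getting the iteration to close is exactly what is behind the remaining restrictions in \eqref{sc}: at each stage the Lebesgue exponents produced must stay Strichartz-admissible and the nonlinearity estimates must lose only a controlled number of derivatives, and this breaks down once $s_c$ exceeds the stated thresholds (for $d\ge7$ that threshold is the smaller root of $2s^2-(d+2)s+(d+2)=0$, which decreases to $1$ as $d\to\infty$). Once $u\in L^2_x$ uniformly, the mass $M(u)$ is finite and conserved, and interpolating with $L_t^\infty\dot H^{s_c}_x$ (here $s_c>1$) places $u$ in $L_t^\infty\dot H^s_x$ for every $0\le s\le s_c$; since $p>\tfrac4{d-2}$ forces the embedding $\dot H^s_x\hookrightarrow L^{2+p}_x$ for some $s<s_c$, the energy $E(u)$ is finite and conserved as well.

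It remains to preclude the three scenarios. If $u$ is a low-to-high frequency cascade, pick $t_n$ with $N(t_n)\to0$; passing to a subsequence, the rescaled orbit converges strongly in $\dot H^{s_c}_x$ to a nonzero profile $v_\infty$, while the rescaled $L^2_x$ norms are $N(t_n)^{2s_c}M(u)\to0$, so $v_\infty$ must vanish --- a contradiction unless $M(u)=0$, i.e.\ $u\equiv0$. If $u$ is a finite-time blowup solution, then $N(t)\to\infty$ as $t$ approaches the blowup time; the argument now standard in this circle of ideas --- exploiting conservation of the (finite) mass together with the concentration of $u(t)$ at a single point forced by almost periodicity --- again yields $u\equiv0$. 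Finally, if $u$ is soliton-like, it is global with $N(t)\equiv1$ and of finite energy; one inserts $u$ into an interaction Morawetz inequality of $\dot H^{1/4}_x$-scaling, whose right-hand side is finite by the uniform $\dot H^s_x$ bounds just obtained, but whose left-hand side --- being translation invariant, so that no control of $x(t)$ is needed --- is forced by almost periodicity with $N(t)\equiv1$ to grow at least linearly in $|I|=\infty$. All three scenarios being impossible, no minimal counterexample exists; hence $L(E)<\infty$ for every $E$, which is the asserted bound $S_\R(u)\le C(\|u\|_{L_t^\infty\dot H^{s_c}_x})$, and the local theory promotes this to globality of $u$. The proof of Theorem~\ref{T:main cubic} follows the same plan, with the fractional-calculus estimates for the nonlinearity simplified by its algebraic form, so that the extra constraints in \eqref{sc} are not needed.
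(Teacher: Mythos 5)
Your outline matches the paper's architecture in the large: local theory and long-time stability in $\dot H^{s_c}_x$, reduction via the Palais--Smale condition and profile decomposition to an almost periodic minimal counterexample, further reduction of $N(t)$ to three scenarios, negative regularity as the crux, and preclusion arguments. But two of your three preclusion arguments, as written, do not go through.

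First, your plan treats the finite-time blowup case by the same ``$u\in L^2_x$, then mass conservation'' route as the other two, but the double-Duhamel argument that produces $u\in L^2_x$ needs the solution to be \emph{global}. After pairing the forward and backward Duhamel representations one must integrate something like $\int_0^{\infty}\!\!\int_{-\infty}^0 \min\{|t-\tau|^{-1},N^2\}^{d/r-d/2}\,dt\,d\tau$; the gain $N^{2s_0}$ with $s_0=\tfrac{d}{r}-\tfrac{d+4}{2}>0$ comes precisely from the decay as $|t-\tau|\to\infty$, which is unavailable over a bounded interval. The paper's negative regularity theorem accordingly carries the hypothesis that $u$ is global with $\inf_t N(t)\geq1$. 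Finite-time blowup is dispatched by a different and more elementary argument: Duhamel forward on $[t,T)$, Strichartz, and H\"older give $\||\nabla|^{s_c-1}u(t)\|_2\lesssim_u(T-t)^{1/2}$; interpolating with the $\dot H^{s_c}_x$ bound (using $1<s_c<2$) forces $E(u(t))\to0$ as $t\to T$, and energy conservation then kills the solution. In the cubic case the same step is iterated down to mass or energy. You should replace your finite-time argument by this one, since the one you sketch presupposes a fact you cannot prove in that scenario.

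Second, your cascade argument has the frequency scale going the wrong way. The low-to-high cascade of Theorem~\ref{T:enemies} has $\inf_{t\in\R}N(t)\geq1$ and $\limsup_{t\to+\infty}N(t)=\infty$; there is no sequence with $N(t_n)\to0$. With the correct direction $N(t_n)\to\infty$, the rescaled $L^2_x$ norms $N(t_n)^{2s_c}M(u)$ blow up rather than vanish, and since a strong $\dot H^{s_c}_x$ limit need not lie in $L^2_x$, no contradiction follows from your reasoning. The paper instead argues on the Fourier side: negative regularity ($u\in\dot H^{-\eps}_x$) together with the compactness modulus controls the low frequencies uniformly, while the $\dot H^{s_c}_x$ bound controls the high frequencies, yielding $M(u)\lesssim_u c(\eta)^{-2s_c}N(t)^{-2s_c}+\eta^{\eps/(s_c+\eps)}$; sending $N(t_n)\to\infty$ and then $\eta\to0$ gives $M(u)=0$. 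Finally, a smaller omission: in the negative-regularity step itself you jump directly to the double Duhamel pairing, but the argument first needs a Gronwall-type iteration (the recurrence for $A(N)$) to break scaling in a Lebesgue space, $u\in L^\infty_tL^q_x$ for some $q<\tfrac{dp}{2}$, so that $|\nabla|^sF(u)\in L^\infty_tL^r_x$ with $r<\tfrac{2d}{d+4}$; without that Lebesgue-space gain the double-Duhamel integral does not converge. The soliton step and the surrounding structure are otherwise consistent with the paper.
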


As we already mentioned, the proofs of Theorems~\ref{T:main cubic} and \ref{T:main} follow closely the approach taken in
\cite{Berbec} to study the energy-critical problem.  We outline the argument in subsection~\ref{SS:outline} below.  Our decision
to work in dimensions $d\geq 5$ was motivated by the fact that it allows us to employ some techniques used in \cite{Berbec}, in
particular, the double Duhamel trick.  The natural approach in lower dimensions would be to use the frequency localized
interaction Morawetz inequality in the spirit of \cite{ckstt:gwp}. While subsequent developments (some of which are reviewed in
this paper) lead to simplifications of \cite{ckstt:gwp}, this would still be a significant undertaking and we do not pursue it
here.

The arguments presented here apply mutis mutandis to the corresponding Hartree equations; indeed, the fact that the nonlinearity depends
polynomially on $u$ for that equation means that it resembles the simpler cubic case treated here.  We also believe that the arguments
adapt to the corresponding energy-supercritical wave equations in dimensions $d\geq 6$; however, we have not worked through the details.

To study the global theory for \eqref{nls}, we must first develop a local theory for this equation.  To this end, we revisit arguments
by Cazenave and Weissler, \cite{cwI}, who treated the case $0\leq s_c\leq 1$, as well as more sophisticated stability results in the
spirit of \cite{Nakanishi, TV}.

\begin{theorem}[Local well-posedness]\label{T:local}
Let $d$ and $s_c$ be as in Theorem~\ref{T:main cubic} or \ref{T:main}.  Then, given $u_0\in\dot
H_x^{s_c}(\R^d)$ and $t_0\in \R$, there exists a unique maximal-lifespan solution $u:\ird \rightarrow \C$ to \eqref{nls} with
initial data $u(t_0)=u_0$.  This solution also has the following properties:
\begin{CI}
\item (Local existence) $I$ is an open neighbourhood of $t_0$.
\item (Blowup criterion) If $\sup I$ is finite, then $u$ blows up forward in time (in the sense of Definition~\ref{D:blowup}).  If $\inf I$ is finite,
then $u$ blows up backward in time.
\item (Scattering) If $\sup I=+\infty$ and $u$ does not blow up forward in time, then $u$ scatters forward in time, that is,
there exists a unique $u_+ \in \dot H^{s_c}_x(\R^d)$ such that
\begin{equation}\label{like u+}
\lim_{t \to +\infty} \| u(t)-e^{it\Delta} u_+ \|_{\dot H^{s_c}_x(\R^d)} = 0.
\end{equation}
Conversely, given $u_+ \in \dot H^{s_c}_x(\R^d)$ there is a unique solution to \eqref{nls} in a neighbourhood of infinity so that
\eqref{like u+} holds.
\item (Small data global existence) If $\bigl\||\nabla|^{s_c} u_0\|_2$ is sufficiently small (depending on $d,p$), then $u$ is a global solution
which does not blow up either forward or backward in time.  Indeed, in this case $S_\R(u)\lesssim \bigl\| |\nabla|^{s_c} u_0\bigr\|_2^{p(d+2)/2}$.
\end{CI}
\end{theorem}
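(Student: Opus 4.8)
The plan is to run a standard contraction-mapping argument on the Duhamel map $\Phi u(t)=e^{i(t-t_0)\Delta}u_0-i\int_{t_0}^{t}e^{i(t-s)\Delta}F(u(s))\,ds$, and then to promote the fixed point to a maximal-lifespan solution with the stated properties by the usual continuation and blow-up-criterion arguments. First I would fix the function spaces: work in the Strichartz space $\dot S^{s_c}(I)$ obtained by placing $|\nabla|^{s_c}u$ in $L^q_tL^r_x(I\times\R^d)$ over all Schr\"odinger-admissible pairs $(q,r)$ (the endpoint is available since $d\ge 5$), and note that the scattering exponent $L^{p(d+2)/2}_{t,x}$ is controlled by $\dot S^{s_c}(I)$ through Sobolev embedding at a particular admissible pair --- a short computation using $s_c=\frac d2-\frac 2p$ shows the exponents match. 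On a time interval $I\ni t_0$ on which the relevant spacetime norms of $|\nabla|^{s_c}e^{i(t-t_0)\Delta}u_0$ are small (such $I$ exists by dominated convergence for arbitrary $u_0\in\dot H^{s_c}_x$, since $e^{it\Delta}u_0$ lies in these spaces globally by Strichartz, and one may take $I=\R$ once $\||\nabla|^{s_c}u_0\|_2$ is small), one checks that $\Phi$ maps a small ball into itself and is a contraction there.

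The engine of the contraction --- and, I expect, the main obstacle and technical heart of the proof --- is the nonlinear estimate
$$\bigl\||\nabla|^{s_c}F(u)\bigr\|_{N(I)}\lesssim \|u\|_{L^{p(d+2)/2}_{t,x}(I)}^{p}\,\bigl\||\nabla|^{s_c}u\bigr\|_{\dot S^{s_c}(I)}$$
(with $N(I)$ the dual Strichartz space), together with the companion difference estimate bounding $|\nabla|^{s_c}\bigl(F(u)-F(v)\bigr)$ via $|F(u)-F(v)|\lesssim(|u|^{p}+|v|^{p})|u-v|$. Since $s_c>1$ throughout, I would write $|\nabla|^{s_c}=|\nabla|^{s_c-1}\nabla$, use $\nabla F(u)=F_z(u)\nabla u+F_{\bar z}(u)\overline{\nabla u}$ with $F_z,F_{\bar z}=O(|u|^{p})$, and then combine the fractional product rule for $|\nabla|^{s_c-1}$ with the fractional chain rule applied to $F_z,F_{\bar z}$, finishing with H\"older in spacetime to distribute the output exponents across the admissible and Sobolev pairs. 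In the setting of Theorem~\ref{T:main} the quantity $s_c-1$ is $<\tfrac12$ by \eqref{sc}, so only first-order information on $F$ is used, and the smoothness hypothesis $s_c<1+p$ (equivalently $2p^2-p(d-2)+4>0$) is exactly what makes $F_z,F_{\bar z}$ H\"older continuous of the order required by the fractional chain rule; in the cubic case $F(u)=|u|^2u$ is a polynomial in $u$ and $\bar u$, so there is no smoothness constraint and one instead distributes the $s_c$ derivatives among the three factors by the (fractional) Leibniz rule.

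With the fixed point in hand, local existence on an open $I\ni t_0$ and continuous dependence on the data are immediate, while unconditional uniqueness in the full solution class of Definition~\ref{D:solution} follows by a further interval-subdivision argument comparing two solutions on short subintervals through the difference estimate; a standard continuation argument then produces the maximal-lifespan solution. For the blow-up criterion, suppose $\sup I<\infty$ but $S_{[t_0,\sup I)}(u)<\infty$; feeding this finiteness back into the nonlinear estimate and Strichartz gives $\bigl\||\nabla|^{s_c}u\bigr\|_{\dot S^{s_c}([t_0,\sup I))}<\infty$, whence $e^{-it\Delta}u(t)=e^{-it_0\Delta}u_0-i\int_{t_0}^{t}e^{-is\Delta}F(u(s))\,ds$ is Cauchy in $\dot H^{s_c}_x$ as $t\uparrow\sup I$, so $u$ extends past $\sup I$ by the local theory --- contradicting maximality; the backward statement is symmetric.

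For scattering, when $\sup I=+\infty$ and $u$ does not blow up forward, the same argument shows $\int_{t_0}^{\infty}e^{-is\Delta}F(u(s))\,ds$ converges in $\dot H^{s_c}_x$, so $u_+:=e^{-it_0\Delta}u_0-i\int_{t_0}^{\infty}e^{-is\Delta}F(u(s))\,ds$ is the desired asymptotic state and \eqref{like u+} follows by applying Strichartz and the nonlinear estimate on $[t,\infty)$. The converse --- solving \eqref{nls} backward from a prescribed asymptote --- is another contraction, run on $[T,\infty)$ for the integral equation $u(t)=e^{it\Delta}u_+ + i\int_{t}^{\infty}e^{i(t-s)\Delta}F(u(s))\,ds$ with $T$ large enough that $e^{it\Delta}u_+$ is small in the relevant norms on $[T,\infty)$. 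Finally, small-data global existence with $S_\R(u)\lesssim\||\nabla|^{s_c}u_0\|_2^{p(d+2)/2}$ is exactly the case $I=\R$ of the contraction above, using $\|e^{it\Delta}u_0\|_{L^{p(d+2)/2}_{t,x}(\R\times\R^d)}\lesssim\||\nabla|^{s_c}u_0\|_2$ from Strichartz and Sobolev embedding.
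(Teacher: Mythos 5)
Your plan deviates from the paper's route in a way that hides a real difficulty. The paper does not run the contraction directly in the critical space $\dot H^{s_c}_x$; instead, in Theorem~\ref{T:standard lwp} it temporarily assumes the \emph{extra} hypothesis $u_0\in H^{s_c}_x$ so that it can contract in the $L^2$-scaling metric $d(u,v)=\|u-v\|_{L^{p+2}_tL^{\frac{2d(p+2)}{2(d-2)+dp}}_x}$, where the difference estimate for the nonlinearity is genuinely Lipschitz --- one only ever needs $\|F(u)-F(v)\|$ with \emph{no} derivatives, for which the pointwise bound $|F(u)-F(v)|\lesssim(|u|^p+|v|^p)|u-v|$ suffices. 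The $\dot H^{s_c}$ bounds are obtained a posteriori by Strichartz. The extra $L^2_x$ assumption on $u_0$ is then removed by appealing to the stability theorems (Theorems~\ref{T:stab cubic} and~\ref{T:stab}), which are developed in the specially tailored spaces $X$, $Y$ carrying only $p/2$ derivatives, and Theorem~\ref{T:local} is then derived as a corollary of all of this.

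The step you wave at --- ``the companion difference estimate bounding $|\nabla|^{s_c}\bigl(F(u)-F(v)\bigr)$'' --- is precisely where your direct approach breaks down for general $p$. When $p$ is not an even integer, $F'$ is only H\"older continuous of order $\min\{p,1\}$, and the natural estimate for $|\nabla|^{s_c}[F(u+v)-F(u)]$ in terms of $|\nabla|^{s_c}v$ produces \emph{fractional} powers of $\||\nabla|^{s_c}v\|_{S^0}$; compare with \eqref{LT:nonlin sc} in the paper, which is obtained only after substantial machinery (Lemma~\ref{L:frac deriv of diff}, the Strichartz square-function $\mathcal D_s$, and maximal-function pointwise bounds). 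A map whose difference bound is only H\"older in the difference is not a contraction in the classical sense; $\sum \eps^{\theta^n}$ does not converge for $\theta<1$. So a single-norm fixed-point argument at the $\dot H^{s_c}$ level does not close for Theorem~\ref{T:main}. (For the cubic case, where $F$ is a polynomial in $u,\bar u$, the Leibniz rule gives a Lipschitz difference estimate and your argument is essentially fine.) The paper sidesteps this either by contracting at the $L^2$ level (where $F$ is honestly Lipschitz) or, in the stability theorems, by running a two-norm bootstrap in which the low-derivative $X$-norm of the difference \emph{does} close in a Lipschitz way and the $\dot H^{s_c}$-level bound is recovered from it. Your remaining steps (blowup criterion via Cauchy-in-$\dot H^{s_c}$, forward scattering via convergence of the Duhamel integral, the wave-operator contraction near $+\infty$, small-data global existence with the stated bound) match the standard arguments and are consistent with the paper, once the well-posedness engine is repaired.
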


In Section~\ref{S:Local theory} we establish this theorem as a corollary of our stability results Theorems~\ref{T:stab cubic} and \ref{T:stab}.
These stability results are essential to the arguments we present, more specifically, to the proof of Theorem~\ref{T:reduct}.

\subsection{Outline of the proofs of Theorems~\ref{T:main cubic} and \ref{T:main}}\label{SS:outline}

We argue by contradiction.  We show that if either Theorem~\ref{T:main cubic} or Theorem~\ref{T:main} failed, this would imply the existence
of a very special type of counterexample.  Such counterexamples are then shown to have a wealth of properties not immediately
apparent from their construction, so many properties, in fact, that they cannot exist.

While we will make some further reductions later, the main property of the special counterexamples is almost periodicity modulo
symmetries:

\begin{definition}[Almost periodicity modulo symmetries]\label{D:ap}
Suppose $s_c>0$.  A solution $u$ to \eqref{nls} with lifespan $I$ is said to be \emph{almost periodic modulo symmetries} if there
exist functions $N: I \to \R^+$, $x:I\to \R^d$, and $C: \R^+ \to \R^+$ such that for all $t \in I$ and $\eta > 0$,
$$ \int_{|x-x(t)| \geq C(\eta)/N(t)} \bigl||\nabla|^{s_c} u(t,x)\bigr|^2\, dx
    +\int_{|\xi| \geq C(\eta) N(t)} |\xi|^{2s_c}\, | \hat u(t,\xi)|^2\, d\xi\leq \eta.
$$
We refer to the function $N$ as the \emph{frequency scale function} for the solution $u$, $x$ the \emph{spatial center
function}, and to $C$ as the \emph{compactness modulus function}.
\end{definition}

\begin{remark}
The parameter $N(t)$ measures the frequency scale of the solution at time $t$, while $1/N(t)$ measures the spatial scale.
It is possible to multiply $N(t)$ by any function of $t$ that is bounded both above and below, provided that we also modify
the compactness modulus function $C$ accordingly.
\end{remark}

\begin{remark}
When $s_c=0$ the equation admits a new symmetry, namely, Galilei invariance.  This introduces a \emph{frequency center function} $\xi(t)$
in the definition of almost periodicity modulo symmetries; see \cite{keraani-l2, tvz:cc} for further discussion.
\end{remark}

\begin{remark}\label{R:pot energy}
By the Ascoli--Arzela Theorem, a family of functions is precompact in $\dot H^{s_c}_x(\R^d)$ if and only if it is norm-bounded and
there exists a compactness modulus function $C$ so that
$$
\int_{|x| \geq C(\eta)} \bigl||\nabla|^{s_c} f(x)\bigr|^2\ dx + \int_{|\xi| \geq C(\eta)} |\xi|^{2s_c} \, |\hat f(\xi)|^2\ d\xi \leq \eta
$$
for all functions $f$ in the family.  Thus, an equivalent formulation of Definition~\ref{D:ap} is as follows: $u$ is almost
periodic modulo symmetries if and only if
$$
\{ u(t): t \in I \} \subseteq \{ \lambda^{\frac2p} f(\lambda (x+x_0)) : \, \lambda\in(0,\infty), \ x_0\in \R^d, \text{ and
}f \in K \}
$$
for some compact subset $K$ of $\dot H^{s_c}_x(\R^d)$.
\end{remark}

\begin{remark}\label{R:c small}
A further consequence of compactness modulo symmetries is the existence of a function $c: \R^+\to \R^+$ so that
$$
\int_{|x-x(t)| \leq c(\eta)/N(t)} \bigl||\nabla|^{s_c} u(t,x)\bigr|^2\, dx
    + \int_{|\xi| \leq c(\eta) N(t)} |\xi|^{2s_c}\, | \hat u(t,\xi)|^2\, d\xi \leq \eta
$$
for all $t \in I$ and $\eta > 0$.
\end{remark}

With these preliminaries out of the way, we can now describe the first major milestone in the proof of Theorems~\ref{T:main cubic} and \ref{T:main}.

\begin{theorem}[Reduction to almost periodic solutions]\label{T:reduct}\quad
Suppose that Theorem~\ref{T:main cubic} (or Theorem~\ref{T:main}) failed.  Then there exists a maximal-lifespan
solution $u:\ird\to \C$ to \eqref{nls} such that $u\in L_t^\infty \dot H^{s_c}_x(I\times\R^d)$, $u$ is almost periodic modulo symmetries,
and $u$ blows up both forward and backward in time. Moreover, $u$ has minimal $L_t^\infty \dot H^{s_c}_x$-norm among all blowup solutions,
that is,
$$
\sup_{t\in I} \bigl\| |\nabla|^{s_c} u(t)\bigr\|_2 \leq \sup_{t\in J}\bigl\| |\nabla|^{s_c} v(t)\bigr\|_2
$$
for all maximal-lifespan solutions $v:J\times\R^d \to \C$ that blow up in at least one time direction.
\end{theorem}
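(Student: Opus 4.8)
The plan is to follow the now-standard concentration-compactness/rigidity roadmap: contradiction hypothesis $\Rightarrow$ failure of a spacetime bound at some critical threshold $\Rightarrow$ extraction of a minimal blowup solution via a linear profile decomposition $\Rightarrow$ almost periodicity of this minimal object. Concretely, define
\begin{equation*}
L(E) := \sup \bigl\{ S_I(u) : u \text{ maximal-lifespan solution to \eqref{nls}}, \ \sup_{t\in I}\bigl\||\nabla|^{s_c}u(t)\bigr\|_2^2 \leq E \bigr\}.
\end{equation*}
The small-data theory in Theorem~\ref{T:local} gives $L(E)<\infty$ for $E$ small, while the negation of Theorem~\ref{T:main cubic} (resp.\ Theorem~\ref{T:main}) means $L(E)=\infty$ for some finite $E$; hence there is a critical threshold $E_c\in(0,\infty)$ with $L(E)<\infty$ for $E<E_c$ and $L(E)=\infty$ for $E>E_c$. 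By choosing a sequence of solutions $u_n$ with $\sup_t\||\nabla|^{s_c}u_n(t)\|_2^2\to E_c$ and $S_{I_n}(u_n)\to\infty$ (split symmetrically so that each blows up in both time directions), one aims to produce, in the limit, a single solution at exactly the threshold that still blows up in both directions and has minimal $L_t^\infty\dot H^{s_c}_x$-norm among all blowup solutions.

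The engine that makes this work is a linear profile decomposition adapted to $\dot H^{s_c}_x$ and the propagator $e^{it\Delta}$: after passing to a subsequence, $|\nabla|^{s_c}u_n(0)$ (or the appropriate rescaled/translated data) decomposes as a sum of asymptotically orthogonal profiles $g_n^j\phi^j$ plus a small remainder, with the orthogonality of the parameters $(\lambda_n^j,x_n^j,t_n^j)$ giving a Pythagorean decoupling of the $\dot H^{s_c}_x$-norms. One then evolves each profile nonlinearly — using the local/stability theory (Theorems~\ref{T:stab cubic}, \ref{T:stab}) to pass from linear to nonlinear profiles and to handle the cases $t_n^j\to\pm\infty$ via the scattering statement in Theorem~\ref{T:local} — and uses the stability theorem once more to glue the nonlinear profiles into an approximate solution for $u_n$. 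If two or more profiles were nontrivial, each would have strictly smaller $\dot H^{s_c}_x$-norm than $E_c$, hence finite scattering size by definition of $E_c$, and stability would force $S_{I_n}(u_n)$ to stay bounded — a contradiction. Therefore exactly one profile survives; this profile is the desired minimal blowup solution $u$, and one checks it blows up in both time directions (otherwise it would scatter in one direction, again via Theorem~\ref{T:local}, contradicting non-triviality of the scattering size) and attains the infimum of the $\dot H^{s_c}_x$-norm over all blowup solutions. Almost periodicity modulo symmetries then follows: the orbit $\{u(t):t\in I\}$ must be precompact in $\dot H^{s_c}_x$ modulo scaling and translation, since any non-precompact piece could be developed into a second profile in a profile decomposition of a sequence $u(t_n)$, contradicting minimality; the frequency scale function $N(t)$ and spatial center $x(t)$ are read off from this compactness, as in Remark~\ref{R:pot energy}.

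The main obstacle is the linear profile decomposition in $\dot H^{s_c}_x$ with $s_c$ possibly large (and, in the general case, the nonlinearity not polynomial): one needs an inverse Strichartz/refined Sobolev inequality at this regularity, together with careful bookkeeping of the scaling, translation, and time-translation parameters and their asymptotic orthogonality. A secondary technical point is that the approximation/stability argument requires controlling $\dot H^{s_c}_x$-derivatives of the nonlinearity, which is exactly where the smoothness hypothesis $s_c<1+p$ enters; one must verify that the relevant fractional chain-rule and product estimates survive the gluing of profiles whose error terms have been made small only in Strichartz norms. The splitting that forces blowup in both time directions (rather than just one), and the verification that the surviving profile genuinely inherits minimality rather than merely $\dot H^{s_c}_x$-norm $\leq E_c$, are routine once the profile decomposition and stability theory are in place, so the bulk of the work — and the only place where anything could go wrong — is the harmonic-analytic input of the profile decomposition and its interaction with the non-polynomial nonlinearity.
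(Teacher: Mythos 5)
Your proposal follows the same concentration-compactness route the paper uses: the critical threshold $E_c$ for the function $L$, the linear profile decomposition of Lemma~\ref{L:cc} (imported from \cite{Shao:profile}), and the stability theorems to pass from linear to nonlinear profiles and to glue. The paper organizes this around an explicit Palais--Smale condition modulo symmetries (Proposition~\ref{P:palais-smale}) before deducing Theorem~\ref{T:reduct}; for the one genuinely new technical point you correctly flag but leave open — decoupling of nonlinear profiles when the nonlinearity is non-polynomial and $s_c$ is a non-integer — the paper supplies a concrete pointwise substitute \eqref{E:pointwise}, built from Strichartz's $\mathcal{D}_s$ operator and the Hardy--Littlewood maximal function, in place of the usual energy-critical pointwise gradient inequality.
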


The reduction to almost periodic solutions was first realized by Keraani \cite{keraani-l2} in the context of the mass-critical NLS.  This was adapted
to the energy-critical case by Kenig and Merle \cite{kenig-merle}.

Theorem~\ref{T:reduct} provides no information on the modulation parameters $x(t)$ and $N(t)$.  In order to treat the mass-critical NLS
in two dimensions, a further reduction was introduced in \cite{KTV} where the behaviour of $N(t)$ is heavily constrained.  This argument
wa adapted to the energy-critical case in \cite{Berbec}.  This latter argument is directly applicable to the setting of this paper and yields

\begin{theorem}[Three special scenarios for blowup]\label{T:enemies}
Suppose that Theorem~\ref{T:main cubic} (or Theorem~\ref{T:main}) failed.  Then there exists a maximal-lifespan solution $u:I\times\R^d\to \C$,
which obeys $u\in L_t^\infty \dot H^{s_c}_x(I\times\R^d)$, is almost periodic modulo symmetries, and $S_I(u)=\infty$.
Moreover, we can also ensure that the lifespan $I$ and the frequency scale function $N:I\to\R^+$ match one of the following three scenarios:
\begin{itemize}
\item[I.] (Finite-time blowup) We have that either $\sup I<\infty $ or $|\inf I|<\infty$.
\item[II.] (Soliton-like solution) We have $I = \R$ and
\begin{equation*}
 N(t) = 1 \quad \text{for all} \quad t \in \R.
\end{equation*}
\item[III.] (Low-to-high frequency cascade) We have $I = \R$,
\begin{equation*}
\inf_{t \in \R} N(t) \geq 1, \quad \text{and} \quad \limsup_{t \to +\infty} N(t) = \infty.
\end{equation*}
\end{itemize}
\end{theorem}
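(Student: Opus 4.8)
The plan is to start from the almost periodic solution produced by Theorem~\ref{T:reduct} and run the now-standard argument, adapted from \cite{KTV} and \cite{Berbec}, that rules out rapid oscillation of the frequency scale function $N(t)$ and thereby forces one of the three listed scenarios. First I would dispose of the finite-time blowup case: if the solution of Theorem~\ref{T:reduct} has $\sup I<\infty$ or $|\inf I|<\infty$, we are already in Scenario~I and there is nothing to prove. So assume $I=\R$. Since $N$ is only defined up to multiplication by a bounded function of $t$, I would invoke the usual local-constancy lemma for $N(t)$: using the local theory (Theorem~\ref{T:local}) and the fact that a solution with small scattering size on a time interval cannot have its frequency scale change by more than a bounded factor, one shows there is a constant $\delta>0$ (depending on the compactness data of $u$) such that $N(t)\sim N(t')$ whenever $|t-t'|\le \delta/N(t)^2$. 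After passing to this normalized $N$, one splits $\R$ into the characteristic subintervals $J_k$ on which $N$ is essentially constant and $\int_{J_k} N(t)^2\,dt\sim 1$.

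Next I would perform the dichotomy on the global behaviour of $N$. Either $\liminf_{t\to\pm\infty} N(t) > 0$ in both time directions, or not. The key point, established exactly as in \cite{KTV,Berbec}, is that $N(t)$ cannot converge to zero along a sequence of times going to $+\infty$ (or $-\infty$): if it did, one could rescale the solution at those times and extract, via the almost periodicity and a compactness argument, a nonzero solution at the scaling-critical regularity with zero $\dot H^{s_c}_x$ norm in the limit — or more precisely, use the fact that $N(t_n)\to 0$ together with the compactness of $\{u(t)\}$ modulo symmetries to contradict the double-Duhamel / energy-type estimates that prevent the solution from escaping to arbitrarily low frequency. (This is the ``no low-frequency escape'' step; in practice it is phrased as showing that if $\inf_t N(t)=0$ then a suitable limiting object violates the minimality in Theorem~\ref{T:reduct} or the fact that the solution blows up in both directions.) Granting this, after rescaling so that $\inf_{t\in\R} N(t)\ge 1$, there are two cases: if $N(t)$ is also bounded above, then $N(t)\sim 1$ for all $t$ and, absorbing the bounded factor, we may take $N(t)\equiv 1$, which is Scenario~II; if $N(t)$ is unbounded, then $\limsup_{t\to+\infty} N(t)=\infty$ (after possibly reflecting time so that the unboundedness occurs in the forward direction), which is Scenario~III. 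Throughout, $S_I(u)=\infty$ is preserved because rescaling and time translation leave the scattering size invariant, and the almost periodicity is preserved because all the operations used (scaling, time translation, modifying $C$) are symmetries in the sense of Definition~\ref{D:ap}.

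The main obstacle is the ``no low-frequency escape'' step — showing $\inf_{t}N(t)>0$ cannot fail. This is where one genuinely uses that the solution blows up in \emph{both} time directions together with the minimality of its $\dot H^{s_c}_x$ norm: a sequence of times along which $N\to 0$ would, after rescaling, concentrate the solution at frequency $\sim 1$ in a way that makes its $\dot H^{s_c}_x$ profile converge (along a subsequence, by almost periodicity) to that of a new almost periodic solution; one then argues either that this new solution is a blowup solution of strictly smaller norm (contradicting Theorem~\ref{T:reduct}) or that it is identically zero, which is incompatible with the quantitative lower bounds on $\|\,|\nabla|^{s_c}u(t)\|_2$ furnished by Remark~\ref{R:c small}. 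All the remaining pieces — local constancy of $N$, the characteristic interval decomposition, and the final trichotomy on boundedness of $N$ — are routine once the local and stability theory is in hand, and I would cite \cite{KTV,Berbec} for the details rather than reproduce them.
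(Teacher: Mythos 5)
The paper offers no proof of this theorem at all: immediately before the statement it simply asserts that the reduction introduced in \cite{KTV} (and adapted in \cite{Berbec}) ``is directly applicable to the setting of this paper and yields'' Theorem~\ref{T:enemies}. Your ultimate strategy --- start from the almost periodic solution of Theorem~\ref{T:reduct}, establish local constancy of $N$, and then cite \cite{KTV,Berbec} --- matches the paper's intent. But the sketch you interpolate contains a conceptual error at exactly the point you flag as ``the main obstacle.'' There is no ``no low-frequency escape'' preclusion: it is false that $N(t)$ cannot tend to zero along a sequence of times for the solution produced by Theorem~\ref{T:reduct}, and no such step appears in \cite{KTV} or \cite{Berbec}. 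Nothing prevents the Theorem~\ref{T:reduct} solution from having $\inf_t N(t)=0$; what the argument actually does is run a dichotomy on the \emph{oscillation} $\osc(T)$ of $N$ over characteristic intervals, and in each branch perform a \emph{compactness extraction}: choose a sequence of times $t_n$ adapted to that branch, rescale by $N(t_n)$, pass to a subsequential limit (this uses the precompactness of the orbit modulo symmetries together with the stability theory, Theorems~\ref{T:stab cubic}/\ref{T:stab}), and verify that the limit is a new almost periodic maximal-lifespan solution, still blowing up, whose frequency scale function is bounded below by $1$ by construction. The lower bound $\inf N\geq1$ in scenarios~II and~III is a property of the \emph{extracted} solution, not a property one establishes for the original one, and no contradiction with minimality is sought.

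Two smaller issues compound this. First, your dichotomy ``$\liminf_{t\to\pm\infty}N(t)>0$ or not'' is not the one used; the correct split (from \cite{KTV}) is on whether $\liminf_{T\to\infty}\osc(T)$ is finite, with a secondary split on $\inf_{t_0}a(t_0)$ for $a(t_0)=\frac{N(t_0)}{\sup_{t\leq t_0}N(t)}+\frac{N(t_0)}{\sup_{t\geq t_0}N(t)}$, and this is what produces the clean trichotomy with $\inf N\geq1$. Second, your closing remark that ``almost periodicity is preserved because all the operations used \ldots are symmetries'' glosses over the passage to a limit: after applying symmetries you still have a \emph{sequence}, and the existence, lifespan, blowup, and almost periodicity of the limiting object all require the stability theorem and a fresh compactness argument, not merely invariance of Definition~\ref{D:ap} under $G$. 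Since you defer to \cite{KTV,Berbec} for details your final answer is serviceable, but the sketch as written misidentifies the key mechanism.
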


Therefore, in order to prove Theorems~\ref{T:main cubic} and \ref{T:main} it suffices to preclude the existence of solutions that satisfy
the criteria in Theorem~\ref{T:enemies}.  Following \cite{Berbec}, the key step in all three scenarios above is to prove negative regularity,
that is, the solution $u$ lies in $L_x^2$ or better.  In scenarios~II and~III, the proof that $u\in L^2_x$ requires $d\geq5$; note that this expresses
better decay of the solution at spatial infinity.  Similar in spirit to \cite{KTV, Berbec, KVZ}, negative regularity is deduced
(via almost periodicity) from the minimality of the solution considered; recall that $u$ has minimal $L_t^\infty \dot H^{s_c}_x$ norm among
all blowup solutions.

A further manifestation of this minimality is the absence of a scattered wave at the endpoints of the lifespan $I$; more formally,
we have the following Duhamel formulae, which play an important role in proving negative regularity.  For a proof, see \cite[Section~6]{tvz:cc}
or \cite[Section~5]{Notes}.

\begin{lemma}[No waste Duhamel formulae]\label{L:duhamel}
Let $u$ be an almost periodic solution to \eqref{nls} on its maximal-lifespan $I$.  Then, for all $t\in I$,
\begin{equation}\label{Duhamel}
\begin{aligned}
u(t)&=\lim_{T\nearrow\,\sup I}i\int_t^T e^{i(t-t')\Delta} F(u(t'))\,dt'\\
&=-\lim_{T\searrow\,\inf I}i\int_T^t e^{i(t-t')\Delta} F(u(t'))\,dt',
\end{aligned}
\end{equation}
as weak limits in $\dot H^{s_c}_x$.
\end{lemma}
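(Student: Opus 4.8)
The plan is to show that the free evolution of $u(t)$ dissipates completely as one runs the time variable out to either endpoint of the lifespan, and that what remains is precisely the Duhamel integral. First I would write, for any $t' \in I$ between $t$ and the endpoint $T$, the exact Duhamel identity
\begin{equation*}
u(t) = e^{i(t-T)\Delta} u(T) + i\int_t^T e^{i(t-t')\Delta} F(u(t'))\,dt',
\end{equation*}
which is just \eqref{old duhamel} with the base point moved to $T$ (valid since $u$ is a strong solution on every compact subinterval). So the claim reduces to showing that $e^{i(t-T)\Delta}u(T) \rightharpoonup 0$ weakly in $\dot H^{s_c}_x$ as $T \nearrow \sup I$ (and the symmetric statement for $\inf I$), which then forces the integral term to converge weakly to $u(t)$.

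The key step is the weak convergence of $e^{i(t-T)\Delta}u(T)$ to zero. Here I would use almost periodicity modulo symmetries together with the dispersive decay of the free propagator. Since $\{u(t') : t' \in I\}$ is precompact in $\dot H^{s_c}_x$ modulo the scaling and translation symmetries (Remark~\ref{R:pot energy}), one treats two cases according to the behaviour of the frequency scale $N(t')$ as $t' \to \sup I$. If $N(t')$ does not go to zero along the relevant sequence, then a subsequence of $u(t')$, after undoing symmetries, converges strongly in $\dot H^{s_c}_x$ to some fixed $f \in K$; applying $e^{i(t-t')\Delta}$ and using that $\|e^{is\Delta} g\|$ in a suitable dispersive (dual Strichartz or $L^q_x$) norm tends to $0$ as $|s| \to \infty$ for fixed $g$, one gets that the free evolution tested against any fixed Schwartz function tends to zero; a density argument in $\dot H^{-s_c}_x$ (the dual space) then upgrades this to weak convergence to zero in $\dot H^{s_c}_x$. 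If instead $N(t') \to 0$, the solution is escaping to low frequencies and the corresponding piece of $u(t')$ already tends weakly to zero directly from the frequency-localization bound in Definition~\ref{D:ap} (the mass at frequencies $\gtrsim C(\eta)N(t')$ is at most $\eta$, so all the $\dot H^{s_c}_x$ norm concentrates at frequencies tending to $0$, which is weakly null). In all cases $e^{i(t-T)\Delta}u(T) \rightharpoonup 0$.

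Finally I would assemble the pieces: from the displayed identity, for every fixed test function $\phi \in \dot H^{-s_c}_x$ we have $\langle u(t),\phi\rangle = \langle e^{i(t-T)\Delta}u(T),\phi\rangle + \langle i\int_t^T e^{i(t-t')\Delta}F(u(t'))\,dt', \phi\rangle$; letting $T \nearrow \sup I$ and using the previous step kills the first term, so the integral converges weakly in $\dot H^{s_c}_x$ to $u(t)$, which is the first line of \eqref{Duhamel}. The second line follows by the same argument with time reversed. The main obstacle is the weak-convergence-to-zero step: one must handle the $N(t') \to 0$ scenario (and the general interplay between the symmetry parameters and the dispersive decay) carefully rather than appealing to strong compactness, since precompactness holds only modulo symmetries and the symmetry parameters may run off to the boundary of their ranges. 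Since this is standard, I would simply cite \cite[Section~6]{tvz:cc} or \cite[Section~5]{Notes} for the details.
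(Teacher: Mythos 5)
Your proposal takes the same route as the paper, which does not prove the lemma in-line but refers to \cite[Section~6]{tvz:cc} and \cite[Section~5]{Notes}; the sketch you give --- move the base point in the Duhamel identity to $T$ and show that the free piece $e^{i(t-T)\Delta}u(T)$ converges weakly to zero in $\dot H^{s_c}_x$ using almost periodicity modulo symmetries --- is indeed the argument carried out in those references.

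The one place your dichotomy is too coarse is the finite-time blowup case. If $\sup I<\infty$, then $|t-T|$ stays bounded as $T\nearrow\sup I$, so the dispersive-decay mechanism you invoke in the branch ``$N(T)$ does not go to zero'' is not directly available: after undoing the symmetries you need the rescaled time $N(T)^2|t-T|$ to go to infinity. This does happen, but only because $N(T)\to\infty$ as $T\nearrow\sup I<\infty$, a fact that is not part of Definition~\ref{D:ap} and must be extracted from the local theory (the blowup criterion in Theorem~\ref{T:local}). Once one has $N(T)\to\infty$, the dispersive estimate becomes unnecessary in this branch: $u(T)$ concentrates at frequencies $\gtrsim c(\eta)N(T)\to\infty$, a fixed test function has decaying $\dot H^{s_c}_x$-tail at high frequencies, and the propagator is a Fourier multiplier that does not move mass between frequencies, so $e^{i(t-T)\Delta}u(T)\rightharpoonup 0$ by the same frequency-localization argument you already use in the $N\to 0$ branch, with the roles of high and low frequencies reversed. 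Thus the cleaner split is by whether the frequency scale degenerates ($N\to 0$ or $N\to\infty$), where frequency concentration alone suffices, versus whether $N$ stays comparable to a constant, where one must first send $|t-T|\to\infty$ and then combine dispersive decay with the uniformly bounded concentration radius $C(\eta)/N(T)$ from almost periodicity.
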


We preclude the finite-time blowup scenario in Section~\ref{S:self-similar}.  In order to achieve this, we use an argument based on the Strichartz
inequality and we take advantage of the finiteness of the blowup time.  We prove that in this case, the solution must have zero mass/energy.
This contradicts the fact that the solution blows up.

To preclude the remaining two scenarios, we follow closely the strategy in \cite{Berbec}.  As described above, the main point is to prove
additional decay/negative regularity; this is achieved in two steps.  First, we prove that the solution belongs to $L^\infty_t L^q_x$
for certain values of $q$ less than $dp/2$, which is the exponent obtained by applying Sobolev embedding to $\dot H^{s_c}_x$.  Just as in \cite{Berbec}, the proof
of this first step involves a bootstrap argument built off the Duhamel formulae \eqref{Duhamel}.  In order to disentangle
frequency interactions, we make use of an `acausal' Gronwall inequality, Lemma~\ref{L:Gronwall}. In the second step, we upgrade
this breach of scaling to negative regularity in $L^2_x$-based spaces.  To do this, we take advantage of the global existence
together with a double Duhamel trick in the spirit of \cite{Berbec, tao:attractor}.  In order to make the associated time
integrals converge, we need both $d\geq 5$ and the decay proved in step one.

In Section~\ref{S:cascade}, we use the negative regularity proved in Section~\ref{S:neg} together with the conservation of mass
to preclude the low-to-high frequency cascade.

In Section~\ref{S:Soliton}, we use the interaction Morawetz inequality to preclude the soliton.  In order to do this,
we need the negative regularity proved in Section~\ref{S:neg}.

\subsection*{Acknowledgements}
The first author was supported by NSF grant DMS-0701085.

%
%
%
%

\section{Notations and useful lemmas}\label{S:Notations}

\subsection{Some notation}
We write $X \lesssim Y$ or $Y \gtrsim X$ whenever $X \leq CY$ for some constant $C>0$.  We use $O(Y)$ to denote any quantity $X$
such that $|X| \lesssim Y$.  We use the notation $X \sim Y$ to mean $X \lesssim Y \lesssim X$.  The fact that these constants
depend upon the dimension $d$ or the power $p$ will be suppressed.  If $C$ depends upon some additional parameters, we will indicate this with
subscripts; for example, $X \lesssim_u Y$ denotes the assertion that $X \leq C_u Y$ for some $C_u$ depending on $u$; similarly
for $X \sim_u Y$, $X = O_u(Y)$, etc.  We denote by $X\pm$ any quantity of the form $X\pm\eps$ for any $\eps>0$.

For any spacetime slab $I\times \R^d$, we use $L_t^qL_x^r(I\times \R^d)$ to denote the Banach space of functions $u: I\times
\R^d\to \mathbb C$ whose norm is
$$
\|u\|_{L_t^qL_x^r(I\times\R^d)}:=\Bigl(\int_I\|u(t)\|_{L^r_x}^q \, dt\Bigr)^{\frac 1q}<\infty,
$$
with the usual modifications when $q$ or $r$ is equal to infinity.  When $q=r$ we abbreviate $L^q_t L^q_x$ as $L^q_{t,x}$.

We define the Fourier transform on $\R^d$ by
$$
\hat f(\xi):= (2\pi)^{-d/2} \int_{\R^d} e^{-ix\xi}f(x)\,dx.
$$
For $s\in \R$, we define the fractional differentiation/integral operator
$$
\widehat{|\nabla|^s f}(\xi):=|\xi|^s\hat f(\xi),
$$
which in turn defines the homogeneous Sobolev norm
$$
\|f\|_{\dot H_x^s(\R^d)}:=\bigl\||\nabla|^s f\bigr\|_{L_x^2(\R^d)}.
$$

Finally, we use the notation
$$
\nabla F(u(x)) := \nabla u(x) \cdot F'(u(x)):= \nabla u(x) F_z(u(x)) + \overline{\nabla u(x)} F_{\bar z}(u(x)),
$$
where $F_z$, $F_{\bar{z}}$ denote the usual complex derivatives
$$
F_z := \frac{1}{2}\Bigl(\frac{\partial F}{\partial x} - i\frac{\partial F}{\partial y}\Bigr), \quad F_{\bar{z}} :=
\frac{1}{2}\Bigl(\frac{\partial F}{\partial x} + i\frac{\partial F}{\partial y}\Bigr).
$$

\subsection{Strichartz estimates}
Let $e^{it\Delta}$ be the free Schr\"odinger evolution.  From the explicit formula
$$ e^{it\Delta} f(x) = \frac{1}{(4\pi i t)^{d/2}} \int_{\R^d} e^{i|x-y|^2/4t} f(y)\, dy,$$
one easily obtains the standard dispersive inequality
\begin{equation}\label{dispersive}
\| e^{it\Delta} f \|_{L_x^\infty(\R^d)} \lesssim|t|^{-\frac d2} \| f \|_{L_x^1(\R^d)}
\end{equation}
for all $t\neq 0$.  In particular, as the free propagator conserves the $L_x^2$-norm,
\begin{equation}\label{dispersive-p}
\| e^{it\Delta} f \|_{L_x^p(\R^d)} \lesssim|t|^{d(\frac 1p - \frac12)} \| f \|_{L_x^{p'}(\R^d)}
\end{equation}
for all $t\neq 0$ and $2\leq p\leq \infty$, where $\tfrac1p+\tfrac 1{p'}=1$.

\begin{definition}[Admissible pairs]
For $d\geq3$, we say that a pair of exponents $(q,r)$ is \emph{Schr\"odinger-admissible} if
\begin{equation}\label{admissible}
\frac 2q+\frac dr=\frac d2 \quad  \text{and} \quad  2\le q, r \le\infty.
\end{equation}
For a fixed spacetime slab $I\times \R^d$, we define the \emph{Strichartz norm}
$$
\|u\|_{S^0(I)}:=\sup_{(q,r)\ \text{admissible}} \|u\|_{L_t^qL_x^r(\ird)}.
$$
We write $S^0(I)$ for the closure of all test functions under this norm and denote by $N^0(I)$ the dual of $S^0(I)$.
\end{definition}

A simple application of Sobolev embedding yields
\begin{align*}
\bigl\||\nabla|^{s_c} u\bigr\|_{L_t^\infty L_x^2(\ird)} + \bigl\||\nabla|^{s_c} u\bigr\|_{L_{t,x}^{\frac{2(d+2)}d}(\ird)}
   & +\|u\|_{L_t^\infty L_x^{\frac {dp}2}(\ird)} + \|u\|_{L_{t,x}^{\frac{p(d+2)}2}(\ird)} \\
   &\lesssim \bigl\||\nabla|^{s_c} u\bigr\|_{S^0(I)}
\end{align*}
for all $d\geq 3$.

As a consequence of the dispersive estimate \eqref{dispersive}, we have the following standard Strichartz estimate.

\begin{lemma}[Strichartz]\label{L:Strichartz}
Let $s\geq 0$, let $I$ be a compact time interval, and let $u: I\times\R^d \to \mathbb C$ be a solution to the forced
Schr\"odinger equation
$$
iu_t+\Delta u=F.
$$
Then,
$$
\bigl\||\nabla|^s u\bigr\|_{ S^0(I)}\lesssim \bigl\||\nabla|^s u(t_0)\bigr\|_{L_x^2}+\bigl\||\nabla|^s F\bigr\|_{N^0(I)}
$$
for any $t_0\in I$.
\end{lemma}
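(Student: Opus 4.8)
The plan is to reduce immediately to the case $s=0$ and then run the classical $TT^*$ argument. Since $|\nabla|^s$ is a Fourier multiplier it commutes with the free propagator $e^{it\Delta}$, so applying it to the Duhamel representation
$$ u(t) = e^{i(t-t_0)\Delta} u(t_0) - i\int_{t_0}^t e^{i(t-t')\Delta} F(t')\, dt', $$
which holds since $u$ solves $iu_t+\Delta u=F$, shows that $|\nabla|^s u$ solves the same forced equation with data $|\nabla|^s u(t_0)$ and forcing $|\nabla|^s F$. Thus it suffices to prove $\|u\|_{S^0(I)}\lesssim \|u(t_0)\|_{L^2_x}+\|F\|_{N^0(I)}$, and by definition of $N^0(I)$ as the dual of $S^0(I)$ it is enough to establish, for every pair of admissible exponents $(q,r)$ and $(\tilde q,\tilde r)$,
$$ \|u\|_{L_t^qL_x^r(\ird)}\lesssim \|u(t_0)\|_{L^2_x}+\|F\|_{L_t^{\tilde q'}L_x^{\tilde r'}(\ird)}. $$
By the triangle inequality this splits into the homogeneous estimate $\|e^{it\Delta}f\|_{L_t^qL_x^r}\lesssim\|f\|_{L^2_x}$ and the retarded (inhomogeneous) estimate for $\int_{t_0}^t e^{i(t-t')\Delta}F(t')\,dt'$; working on $\R$ and extending $F$ by zero outside $I$ reduces the interval version to these.

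For the homogeneous estimate, set $Tf:=e^{it\Delta}f$, so that $TT^*G(t)=\int_\R e^{i(t-t')\Delta}G(t')\,dt'$. By duality, $\|Tf\|_{L_t^qL_x^r}\lesssim\|f\|_{L^2_x}$ is equivalent to $\|TT^*G\|_{L_t^qL_x^r}\lesssim\|G\|_{L_t^{q'}L_x^{r'}}$. The latter follows from the dispersive estimate \eqref{dispersive-p}, which gives $\|e^{i(t-t')\Delta}G(t')\|_{L^r_x}\lesssim|t-t'|^{-d(\frac12-\frac1r)}\|G(t')\|_{L^{r'}_x}$, followed by the Hardy--Littlewood--Sobolev inequality in the time variable; here the admissibility relation $\frac 2q+\frac dr=\frac d2$, i.e. $d(\tfrac12-\tfrac1r)=\tfrac2q$, together with $\tfrac1{q'}=1-\tfrac1q$, is exactly what makes the Hardy--Littlewood--Sobolev exponents balance. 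This argument is valid when $q>2$; the endpoint $q=2$, $r=\tfrac{2d}{d-2}$ (available since $d\geq3$) requires the finer bilinear real-interpolation argument of Keel--Tao.

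For the retarded estimate, combining the homogeneous estimate with its dual yields the \emph{untruncated} bound $\bigl\|\int_\R e^{i(t-t')\Delta}F(t')\,dt'\bigr\|_{L_t^qL_x^r}\lesssim\|F\|_{L_t^{\tilde q'}L_x^{\tilde r'}}$ for any two admissible pairs. To insert the causal cutoff $\mathbf 1_{\{t'<t\}}$ one invokes the Christ--Kiselev lemma, which applies whenever $q>\tilde q'$; since $q\leq\infty$ and $\tilde q'\leq2$ this covers all cases except the double endpoint $q=\tilde q'=2$, again handled directly as in Keel--Tao. Taking the supremum over admissible $(q,r)$ and splitting the forcing according to the definition of $N^0(I)$ finishes the proof. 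The main technical obstacle is the endpoint $q=2$ estimate, both homogeneous and especially retarded, where the Christ--Kiselev lemma is unavailable; these are classical but genuinely nontrivial, and for the purposes of this paper they may simply be cited.
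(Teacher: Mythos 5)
Your proof is correct and is exactly the standard argument that the paper, rather than reproducing, simply cites to Ginibre--Velo, Strichartz, and Keel--Tao: commute $|\nabla|^s$ through the propagator to reduce to $s=0$, prove the homogeneous estimate by $TT^*$ plus dispersive decay and Hardy--Littlewood--Sobolev, obtain the retarded estimate via Christ--Kiselev (with the double endpoint $q=\tilde q'=2$ handled by the Keel--Tao bilinear interpolation), and conclude by duality and the decomposition structure of $N^0(I)$. Since the paper gives no proof of its own, there is nothing to compare beyond noting that your write-up faithfully reconstructs the cited classical argument.
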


\begin{proof}
See, for example, \cite{gv:strichartz, strichartz}.  For the endpoint $(q,r)=\bigl(2,\frac{2d}{d-2}\bigr)$ in dimensions $d\geq 3$, see \cite{tao:keel}.
\end{proof}

\subsection{Basic harmonic analysis}\label{ss:basic}
Let $\varphi(\xi)$ be a radial bump function supported in the ball $\{ \xi \in \R^d: |\xi| \leq \tfrac {11}{10} \}$ and equal to
$1$ on the ball $\{ \xi \in \R^d: |\xi| \leq 1 \}$.  For each number $N > 0$, we define the Fourier multipliers
\begin{align*}
\widehat{P_{\leq N} f}(\xi) &:=\widehat{f_{\leq N}}(\xi):= \varphi(\xi/N) \hat f(\xi)\\
\widehat{P_{> N} f}(\xi) &:=\widehat{f_{> N}}(\xi):= (1 - \varphi(\xi/N)) \hat f(\xi)\\
\widehat{P_N f}(\xi) &:= \widehat{f_{ N}}(\xi):= (\varphi(\xi/N) - \varphi(2\xi/N)) \hat f(\xi)
\end{align*}
and similarly $P_{<N}$ and $P_{\geq N}$.  We also define
$$ P_{M < \cdot \leq N} := P_{\leq N} - P_{\leq M} = \sum_{M < N' \leq N} P_{N'}$$
whenever $M < N$.  We will usually use these multipliers when $M$ and $N$ are \emph{dyadic numbers} (that is, of the form $2^n$
for some integer $n$); in particular, all summations over $N$ or $M$ are understood to be over dyadic numbers.  Nevertheless, it
will occasionally be convenient to allow $M$ and $N$ to not be a power of $2$.

Like all Fourier multipliers, the Littlewood-Paley operators commute with the propagator $e^{it\Delta}$, as well as with
differential operators such as $i\partial_t + \Delta$. We will use basic properties of these operators many many times,
including

\begin{lemma}[Bernstein estimates]\label{Bernstein}
 For $1 \leq r \leq q \leq \infty$,
\begin{align*}
\bigl\| |\nabla|^{\pm s} P_N f\bigr\|_{L^r_x(\R^d)} &\sim N^{\pm s} \| P_N f \|_{L^r_x(\R^d)},\\
\|P_{\leq N} f\|_{L^q_x(\R^d)} &\lesssim N^{\frac{d}{r}-\frac{d}{q}} \|P_{\leq N} f\|_{L^r_x(\R^d)},\\
\|P_N f\|_{L^q_x(\R^d)} &\lesssim N^{\frac{d}{r}-\frac{d}{q}} \| P_N f\|_{L^r_x(\R^d)}.
\end{align*}
\end{lemma}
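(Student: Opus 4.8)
The statement to prove is the Bernstein estimates (Lemma~\ref{Bernstein}). Here is how I would approach it.

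\medskip

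The plan is to reduce everything to Young's inequality for convolutions by writing each Littlewood--Paley operator as convolution with an $L^1$-normalized rescaled kernel. First I would record that for a fixed Schwartz function $\psi$ with $\hat\psi$ supported in an annulus, the operator $P_N$ acts as $P_N f = f * (N^d \psi(N\,\cdot))$ — this follows since multiplication by $\varphi(\xi/N)-\varphi(2\xi/N)$ on the Fourier side corresponds to convolution with the inverse Fourier transform of that symbol, which by scaling is $N^d\psi(N\,\cdot)$ for a fixed Schwartz $\psi$. The key quantitative facts are that $\|N^d\psi(N\,\cdot)\|_{L^1_x} = \|\psi\|_{L^1_x} \lesssim 1$ uniformly in $N$, and, for the mixed-norm estimates, $\|N^d\psi(N\,\cdot)\|_{L^t_x} \sim N^{d/t'}$ where $1/t + 1/t' = 1$; the same applies to $P_{\leq N}$ with $\varphi(\xi/N)$ in place of the annular symbol. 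Young's inequality $\|f*g\|_{L^q_x} \le \|f\|_{L^r_x}\|g\|_{L^t_x}$ with $1 + 1/q = 1/r + 1/t$ then yields the second and third displayed inequalities immediately: choosing $g = N^d\psi(N\,\cdot)$ gives $1/t = 1 - (1/r - 1/q)$, hence $1/t' = 1/r - 1/q$ and the gain is $N^{d/t'} = N^{d/r - d/q}$, as claimed. The $L^r \to L^r$ boundedness of $P_N$, $P_{\leq N}$ (the $r = q$ case, and needed also for the first estimate) is the special case $t = 1$.

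\medskip

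For the first displayed estimate, $\||\nabla|^{\pm s} P_N f\|_{L^r_x} \sim N^{\pm s}\|P_N f\|_{L^r_x}$, I would exploit that $|\nabla|^{\pm s} P_N$ has Fourier symbol $|\xi|^{\pm s}$ times an annular cutoff, which can be written as $N^{\pm s}$ times $m_\pm(\xi/N)$ where $m_\pm(\xi) = |\xi|^{\pm s}(\varphi(\xi) - \varphi(4\xi))$ is a fixed compactly supported smooth function away from the origin — so $m_\pm$ is Schwartz (its support avoids $\xi = 0$), its inverse Fourier transform is Schwartz and hence in $L^1_x$, and the associated convolution operator is bounded on every $L^r_x$ with a constant independent of $N$. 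This gives $\lesssim N^{\pm s}\|P_N f\|_{L^r_x}$. For the reverse inequality one inserts a fattened projection $\tilde P_N$ (with symbol $1$ on the support of the $P_N$ symbol) and writes $P_N f = N^{\mp s}(|\nabla|^{\pm s}\tilde P_N)\,(|\nabla|^{\mp s} P_N f)$... more cleanly: $\|P_N f\|_{L^r_x} = \||\nabla|^{-(\pm s)}|\nabla|^{\pm s} P_N f\|_{L^r_x} \lesssim N^{-(\pm s)}\||\nabla|^{\pm s} P_N f\|_{L^r_x}$ by applying the already-proved direction with the fattened projector absorbing the frequency localization. This establishes $\sim$.

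\medskip

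I do not expect a serious obstacle here; the only point requiring a little care is the mild abuse in the first estimate when the Littlewood--Paley pieces $P_N f$ are not a priori known to lie in the relevant space — this is handled by the standard density argument (the estimates are first proved for Schwartz $f$ and then extended), and by noting that $|\nabla|^{\pm s} P_N$ is a well-defined bounded operator on each $L^r_x$ by the kernel estimate above, so the identities above make sense. A second minor technical point is verifying the decay and $L^1$-bound of the kernels $N^d\psi(N\,\cdot)$ uniformly in $N$: this is immediate from the scaling invariance of the $L^1$ norm under $L^1$-normalized dilation, together with the fact that $\psi$, being the inverse Fourier transform of a Schwartz function, is itself Schwartz. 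With these kernel bounds in hand, every inequality in the lemma is a one-line consequence of Young's inequality.
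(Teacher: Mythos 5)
The paper does not prove this lemma; it is stated as a standard harmonic-analysis fact (textbook material, see e.g. the references cited in that subsection). Your proposal is the standard proof via rescaled convolution kernels and Young's inequality, and the overall structure is correct; there is no comparison to make against a paper proof. Two points need tightening for the argument to deliver exactly the stated inequalities rather than slightly different ones. First, as you have set it up, the Young's-inequality step gives $\|P_N f\|_{L^q_x} \lesssim N^{d/r-d/q}\|f\|_{L^r_x}$ with $f$, not $P_N f$, on the right-hand side; to get the lemma's statement you should insert a fattened projector $\tilde P_N$ (symbol $\tilde\varphi(\xi/N)$ with $\tilde\varphi$ smooth, compactly supported away from the origin, identically $1$ on $\supp(\varphi - \varphi(2\cdot))$) so that $P_N f = \tilde P_N(P_N f)$, and then apply Young with the kernel of $\tilde P_N$ acting on $P_N f$. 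You do invoke this device for the $|\nabla|^{\pm s}$ estimate, but it is needed for the other two as well. Second, the specific choice $\tilde\varphi(\xi)=\varphi(\xi)-\varphi(4\xi)$ is not identically $1$ on $\supp(\varphi-\varphi(2\cdot))$ (it fails near $|\xi|\sim 11/10$), so it does not actually absorb the localization; this is harmless because any admissible $\tilde\varphi$ works, but the one you wrote down is not one. Also note the intermediate display $P_N f = N^{\mp s}(|\nabla|^{\pm s}\tilde P_N)(|\nabla|^{\mp s}P_N f)$ has a stray factor of $N^{\mp s}$; the subsequent "more cleanly" version is the correct one. With those repairs the proof is complete and is the standard one.
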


\begin{lemma}[Product rule, \cite{ChW:fractional chain rule}]\label{L:product rule}
Let $s\in(0,1]$ and $1<r, r_1,r_2,q_1,q_2<\infty$ such that $\frac 1r =\frac 1{r_i}+\frac 1{q_i}$ for $i=1,2$.  Then,
$$
\bigl\||\nabla|^s(fg)\bigr\|_r
\lesssim \|f\|_{r_1}\bigl\||\nabla|^sg\bigr\|_{q_1} + \bigl\||\nabla|^sf\bigr\|_{r_2}\|g\|_{q_2}.
$$
\end{lemma}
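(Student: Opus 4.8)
The plan is to first dispose of the endpoint $s=1$ and then run a Littlewood--Paley/paraproduct argument for $0<s<1$. When $s=1$ the estimate is essentially classical: $\nabla(fg)=g\,\nabla f+f\,\nabla g$, and since the Riesz transforms are bounded on $L^r_x$ for $1<r<\infty$ we have $\||\nabla|(fg)\|_r\sim\|\nabla(fg)\|_r$, so two applications of H\"older with the exponent relations $\frac1r=\frac1{r_i}+\frac1{q_i}$ finish the job. So assume from now on that $0<s<1$. For this range I would decompose the product via the Bony paraproduct: writing $\widetilde P_N:=\sum_{|\log_2(M/N)|\le 3}P_M$,
$$
fg=\sum_N (P_{\le N/8}f)(P_Ng)\ +\ \sum_N (P_Nf)(P_{\le N/8}g)\ +\ \sum_N (P_Nf)(\widetilde P_Ng)=:\Pi_1+\Pi_2+\Pi_3,
$$
where the $N$-th summand of $\Pi_1$ and of $\Pi_2$ is Fourier-supported where $|\xi|\sim N$, while the $N$-th summand of the resonant part $\Pi_3$ is Fourier-supported where $|\xi|\lesssim N$. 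The standard tools I would invoke are: the square-function characterization $\||\nabla|^\sigma h\|_\rho\sim\|(\sum_N N^{2\sigma}|P_Nh|^2)^{1/2}\|_\rho$, valid for $1<\rho<\infty$ and $\sigma\ge 0$; the pointwise bound $\sup_N|P_{\le N}h|\lesssim Mh$; the $L^\rho_x$-boundedness of the Hardy--Littlewood maximal function $M$; and the Fefferman--Stein vector-valued maximal inequality on $\ell^2(L^\rho_x)$. Note that all relevant exponents ($r,r_1,r_2,q_1,q_2$) lie strictly between $1$ and $\infty$, so these tools apply.

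For $\Pi_1$, frequency localization makes $|\nabla|^s$ act like multiplication by $\sim N^s$ on the $N$-th summand, and since those summands are almost frequency-orthogonal, the square-function bound gives
$$
\||\nabla|^s\Pi_1\|_r\lesssim\Bigl\|\bigl(\textstyle\sum_N N^{2s}|P_{\le N/8}f|^2\,|P_Ng|^2\bigr)^{1/2}\Bigr\|_r\le\Bigl\|(Mf)\bigl(\textstyle\sum_N N^{2s}|P_Ng|^2\bigr)^{1/2}\Bigr\|_r,
$$
and H\"older with $\frac1r=\frac1{r_1}+\frac1{q_1}$ together with $\|Mf\|_{r_1}\lesssim\|f\|_{r_1}$ bounds this by $\|f\|_{r_1}\||\nabla|^sg\|_{q_1}$. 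The term $\Pi_2$ is handled symmetrically, with the derivative placed on $f$, yielding the bound $\||\nabla|^sf\|_{r_2}\|g\|_{q_2}$.

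The main obstacle is the resonant term $\Pi_3$, since there the output frequency can be much smaller than the inputs, so $|\nabla|^s$ is not simply $\sim N^s$ on the $N$-th block. Writing $T_N:=(P_Nf)(\widetilde P_Ng)$, which is Fourier-supported in $|\xi|\lesssim N$, one has $P_K\Pi_3=\sum_{N\gtrsim K}P_KT_N$ with $|P_KT_N|\lesssim MT_N$ uniformly in $K$; a Schur-type summation over dyadic $K\lesssim N$, crucially using $s>0$ to make the geometric series $\sum_{N\gtrsim K}(K/N)^{s}$ converge, then yields $\sum_K K^{2s}|P_K\Pi_3|^2\lesssim\sum_N N^{2s}(MT_N)^2$. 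Applying the square-function characterization, the Fefferman--Stein inequality, and the pointwise bound $|\widetilde P_Ng|\lesssim Mg$ gives $\||\nabla|^s\Pi_3\|_r\lesssim\|(Mg)(\sum_N N^{2s}|P_Nf|^2)^{1/2}\|_r$, and one last H\"older with $\frac1r=\frac1{r_2}+\frac1{q_2}$ produces $\||\nabla|^sf\|_{r_2}\|g\|_{q_2}$. Summing the three contributions proves the lemma. An alternative worth keeping in mind is to recast $\Pi_1,\Pi_2,\Pi_3$ as bilinear Coifman--Meyer multipliers (the symbols $|\xi+\eta|^s$ restricted to $|\xi|\gg|\eta|$, $|\eta|\gg|\xi|$, $|\xi|\sim|\eta|$, each suitably renormalized) and invoke the Coifman--Meyer multiplier theorem; this bypasses the square-function bookkeeping above at the cost of citing a heavier black box.
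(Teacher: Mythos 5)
The paper does not prove this lemma; it is cited as a black box from Christ--Weinstein (and the estimate is usually attributed to Kato--Ponce, provable via Coifman--Meyer). Your Littlewood--Paley/paraproduct argument is the standard textbook route and is essentially correct. The treatment of the resonant piece $\Pi_3$ is the right one: the Schur summation $\sum_{N\gtrsim K}(K/N)^{s}$ converges because $s>0$, and after the swap $\sum_K\sum_{N\gtrsim K}\to\sum_N\sum_{K\lesssim N}$ you land on $\sum_N N^{2s}(MT_N)^2$, which Fefferman--Stein and $|\widetilde P_N g|\lesssim Mg$ convert to the desired $\||\nabla|^s f\|_{r_2}\|g\|_{q_2}$.

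Two small remarks. First, for $\Pi_1$ and $\Pi_2$ your displayed chain jumps from $\||\nabla|^s\Pi_1\|_r$ directly to $\|(\sum_N N^{2s}|P_{\le N/8}f|^2|P_Ng|^2)^{1/2}\|_r$, which silently uses the same Fefferman--Stein step you invoke explicitly for $\Pi_3$: one should write $P_K\Pi_1=\sum_{N\sim K}P_K[(P_{\le N/8}f)(P_Ng)]$, bound $|P_K(\cdot)|\lesssim M(\cdot)$, and then apply the vector-valued maximal inequality before pulling out $Mf$. It would be cleaner to state that step once and say it is used in all three pieces. Second, the special-casing of $s=1$ is unnecessary: nothing in the paraproduct argument uses $s<1$ (only $s>0$ for the geometric series in $\Pi_3$), so the same proof gives the full range $s>0$, and in particular the endpoint $s=1$ stated in the lemma. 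This is worth noting because the paper's hypothesis $s\in(0,1]$ is merely the range it needs, not the range of validity.
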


We will also need the following fractional chain rule from \cite{ChW:fractional chain rule}.  For a textbook treatment, see
\cite[\S 2.4]{Taylor:book}.

\begin{lemma}[Fractional chain rule, \cite{ChW:fractional chain rule}]\label{L:Fract chain rule}
Suppose $G\in C^1(\mathbb C)$, $s \in (0,1]$, and $1<q,q_1,q_2<\infty$ are such that $\frac 1q=\frac 1{q_1}+\frac 1{q_2}$.   Then,
$$
\bigl\||\nabla|^sG(u)\bigr\|_q\lesssim \|G'(u)\|_{q_1}\bigl\||\nabla|^s u\bigr\|_{q_2}.
$$
\end{lemma}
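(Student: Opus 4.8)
The plan is to dispose of the endpoint $s=1$ by hand and to treat $0<s<1$ via a square-function characterization of the homogeneous $L^q$-Sobolev norm.

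When $s=1$ one writes $|\nabla| G(u) = -\sum_{j=1}^d R_j \,\partial_j G(u)$, where the $R_j$ are the Riesz transforms. Since $\partial_j G(u) = G_z(u)\,\partial_j u + G_{\bar z}(u)\,\overline{\partial_j u}$ by the ordinary chain rule (legitimate after the standard regularization of $u$ and $G$), and the $R_j$ are bounded on $L^q$ for $1<q<\infty$, Hölder's inequality with $\tfrac1q=\tfrac1{q_1}+\tfrac1{q_2}$ together with $\|\nabla u\|_{q_2}\sim\bigl\||\nabla| u\bigr\|_{q_2}$ gives the claim. So from now on assume $0<s<1$.

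For $0<s<1$ I would use the equivalence, valid for $1<q<\infty$,
$$
\bigl\||\nabla|^s f\bigr\|_q \sim \Bigl\| \Bigl( \int_{\R^d} \frac{|f(x)-f(x-y)|^2}{|y|^{d+2s}}\,dy\Bigr)^{1/2}\Bigr\|_{L^q_x},
$$
which identifies the Bessel-potential space with the Triebel--Lizorkin space $\dot F^s_{q,2}$; only the bound ``$\lesssim$'' is needed here and it may be taken as known (see \cite[\S 2.4]{Taylor:book}). Applying this with $f=G(u)$ and invoking the fundamental theorem of calculus in the complex form
$$
G(u(x))-G(u(x-y)) = \bigl(u(x)-u(x-y)\bigr)\!\int_0^1\! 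G_z(u_\theta)\,d\theta + \overline{\bigl(u(x)-u(x-y)\bigr)}\!\int_0^1\! G_{\bar z}(u_\theta)\,d\theta,
$$
with $u_\theta:=u(x-y)+\theta\bigl(u(x)-u(x-y)\bigr)$, yields the pointwise bound $|G(u(x))-G(u(x-y))|\le |u(x)-u(x-y)|\cdot\sup_{0\le\theta\le1}|G'(u_\theta)|$. Substituting this into the square function and then applying Hölder's inequality in $x$ with exponents $q_1,q_2$ reduces matters to the product of $\|G'(u)\|_{q_1}$ and $\bigl\|\bigl(\int_{\R^d} |u(x)-u(x-y)|^2|y|^{-d-2s}\,dy\bigr)^{1/2}\bigr\|_{L^{q_2}_x}\sim\bigl\||\nabla|^s u\bigr\|_{q_2}$, which is exactly the desired estimate.

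The one genuine subtlety --- and the step I expect to be the main obstacle --- is that $\sup_{\theta}|G'(u_\theta)|$ involves $G'$ along the entire segment joining $u(x-y)$ and $u(x)$, whereas the claimed bound only sees $|G'(u(x))|$. I would handle this by splitting the $y$-integral into the region where $|u(x)-u(x-y)|\le\tfrac12|u(x)|$ and its complement. On the first region $u_\theta$ stays in a fixed annulus about $u(x)$, and continuity of $G'$ (together, in our applications, with the homogeneity $|F'(z)|\sim|z|^p$) lets one dominate $\sup_\theta|G'(u_\theta)|$ by $C\,|G'(u(x))|$; on the second region $|u(x)-u(x-y)|$ is itself comparable to $\max(|u(x)|,|u(x-y)|)$, and the $G'$-factor is absorbed via the Hardy--Littlewood maximal function of a suitable power of $|G'(u)|$, which is harmless since $q_1,q_2>1$. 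This argument is due to Christ and Weinstein \cite{ChW:fractional chain rule}; the product rule of Lemma~\ref{L:product rule} is proved in the same spirit, with the bilinear identity $fg(x)-fg(x-y)=f(x)\bigl(g(x)-g(x-y)\bigr)+g(x-y)\bigl(f(x)-f(x-y)\bigr)$ replacing the chain rule, and is in fact somewhat easier.
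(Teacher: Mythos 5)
The paper does not prove this lemma; it cites it from Christ--Weinstein \cite{ChW:fractional chain rule}, with \cite[\S2.4]{Taylor:book} given as a textbook account, and both of those references proceed via a Littlewood--Paley paraproduct decomposition of $G(u)$ rather than a difference square function. So your route is genuinely different. Your $s=1$ case via Riesz transforms and the ordinary chain rule is fine.

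For $0<s<1$ there is a real gap, and you identify exactly where it is but do not close it. After the fundamental theorem of calculus the obstruction is $\sup_\theta|G'(u_\theta)|$, which on your region $E_2=\{y:|u(x)-u(x-y)|>\tfrac12|u(x)|\}$ is comparable to $|u(x-y)|^p$ (equivalently, to $|u(x)-u(x-y)|^p$), a $y$-dependent quantity that cannot be pulled out of the inner integral. The step ``absorbed via the Hardy--Littlewood maximal function'' is where the argument stops being a proof: the characterization you chose, $\bigl(\int_{\R^d}|f(x)-f(x-y)|^2|y|^{-d-2s}\,dy\bigr)^{1/2}$, offers no mechanism for this, because $|y|^{-d-2s}\,dy$ is not an average, so $\int_{E_2}|u(x-y)|^{2p}\,|u(x)-u(x-y)|^2\,|y|^{-d-2s}\,dy$ is not majorized by $M(|u|^{2p})(x)$ times anything you control. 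The maximal-function trick you want is available for the Strichartz operator $\mathcal D_s$ of \eqref{Ds def} --- the one the paper actually uses in the proof of Lemma~\ref{L:frac deriv of diff} --- precisely because there the inner integral is a genuine $L^1$ average over $\{|y|<1\}$; one then splits the outer $\int dr/r^{1+2s}$ at an $x$-dependent threshold $A(x)$ and optimizes, which is the correct realization of your $E_1/E_2$ decomposition. Two further caveats worth recording: your near-region estimate $\sup_\theta|G'(u_\theta)|\lesssim|G'(u(x))|$ uses the homogeneity $|G'(z)|\sim|z|^p$, so what you would prove is narrower than the stated result for general $G\in C^1$ (this is harmless for the paper's usage, which is always for $F(u)=|u|^pu$, but should be flagged); and Stein's difference characterization in the direction you need holds only for $q>\tfrac{2d}{d+2s}$, again harmless here but not free. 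To make the argument rigorous, either switch to $\mathcal D_s$ and carry out the $A(x)$-split exactly as in the paper's proof of Lemma~\ref{L:frac deriv of diff}, or follow the paraproduct proof in the cited references.
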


When the function $G$ is no longer $C^1$, but merely H\"older continuous, we have the following chain rule:

\begin{lemma}[Fractional chain rule for a H\"older continuous function, \cite{Monica:thesis}]\label{L:FDFP}
\qquad Let $G$ be a H\"older continuous function of order $0<p<1$.  Then, for every $0<s<p$, $1<q<\infty$,
and $\tfrac{s}p<\sigma<1$ we have
\begin{align}\label{fdfp2}
\bigl\| |\nabla|^s G(u)\bigr\|_q
\lesssim \bigl\||u|^{p-\frac{s}{\sigma}}\bigr\|_{q_1} \bigl\||\nabla|^\sigma u\bigr\|^{\frac{s}{\sigma}}_{\frac{s}{\sigma}q_2},
\end{align}
provided $\tfrac{1}{q}=\tfrac{1}{q_1} +\tfrac{1}{q_2}$ and $(1-\frac s{p \sigma})q_1>1$.
\end{lemma}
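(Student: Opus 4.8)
The plan is to reduce the inequality to a pointwise bound for a square function and then integrate. First note that the hypotheses force every fractional order in play to lie in $(0,1)$: $0<s<p<1$, while $s/p<\sigma<1$ forces $0<s/\sigma<p<1$. One may therefore use the standard square-function characterisation of the homogeneous Sobolev norm: for $0<\tau<1$ and $1<r<\infty$,
\begin{equation*}
\bigl\||\nabla|^\tau g\bigr\|_{r}\ \sim\ \bigl\|\mathcal{S}_\tau g\bigr\|_{r},\qquad \mathcal{S}_\tau g(x):=\Bigl(\int_{\R^d}\frac{|g(x+y)-g(x)|^2}{|y|^{d+2\tau}}\,dy\Bigr)^{1/2}.
\end{equation*}
Applying this with $\tau=s$, $r=q$ to $g=G(u)$ and invoking the H\"older continuity of $G$ of order $p$ (so that $|G(u(x+y))-G(u(x))|\lesssim|u(x+y)-u(x)|^{p}$ pointwise), the task reduces to bounding, in $L^q$, the quantity
\begin{equation*}
\mathcal{S}_{s,p}u(x):=\Bigl(\int_{\R^d}\frac{|u(x+y)-u(x)|^{2p}}{|y|^{d+2s}}\,dy\Bigr)^{1/2}
\end{equation*}
by the right-hand side of \eqref{fdfp2}.

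The heart of the argument is a pointwise inequality trading smoothness for size. For fixed $x$, I would split the $y$-integral at a radius $R=R(x)>0$ to be optimised at the end. On $\{|y|\le R\}$ I extract the level-$\sigma$ square function: writing $|u(x+y)-u(x)|^{2p}=(|u(x+y)-u(x)|^{2})^{p}$, applying H\"older's inequality in $y$ with exponents $(1/p,1/(1-p))$ and redistributing the weight $|y|^{-d-2s}$ yields the factor $\mathcal{S}_\sigma u(x)^{2p}$ times $\bigl(\int_{|y|\le R}|y|^{-\beta}\,dy\bigr)^{1-p}$, where $p(d+2\sigma)+(1-p)\beta=d+2s$. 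The hypothesis $\sigma>s/p$ is exactly what makes $\beta<d$, so this last integral is $\sim R^{2(p\sigma-s)/(1-p)}$ and the near contribution is $\lesssim\mathcal{S}_\sigma u(x)^{2p}R^{2(p\sigma-s)}$. On $\{|y|>R\}$ I control $u$ through its size: bounding a power of $|u(x+y)-u(x)|$ by the corresponding power of $|u(x+y)|+|u(x)|$, decomposing into dyadic annuli, and applying H\"older on each annulus followed by the Hardy--Littlewood maximal inequality, one obtains a far contribution $\lesssim\bigl(M(|u|^{\kappa})(x)\bigr)^{2p/\kappa}R^{-2s}$ for a suitable exponent $\kappa$, where $M$ is the Hardy--Littlewood maximal function. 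Choosing $R(x)$ to balance the two regimes then gives a pointwise bound
\begin{equation*}
\mathcal{S}_{s,p}u(x)\ \lesssim\ \bigl(M(|u|^{\kappa})(x)\bigr)^{\frac{p-s/\sigma}{\kappa}}\,\mathcal{S}_\sigma u(x)^{\,s/\sigma}.
\end{equation*}

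To conclude, one takes $L^q$ norms, applies H\"older with $\tfrac1q=\tfrac1{q_1}+\tfrac1{q_2}$, the maximal inequality on the first factor (which requires $\tfrac{p-s/\sigma}{\kappa}q_1>1$) and the square-function characterisation at order $\sigma$ on the second; the first factor then collapses to $\||u|^{\,p-s/\sigma}\|_{q_1}$ and the second to $\||\nabla|^\sigma u\|_{\frac s\sigma q_2}^{\,s/\sigma}$, as desired. The stated hypothesis $(1-\tfrac{s}{p\sigma})q_1>1$ — equivalently $(p-\tfrac s\sigma)q_1>p$ — is precisely the borderline case $\kappa=p$ of the maximal-inequality requirement. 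Here lies the main obstacle: the straightforward far-region estimate only delivers $\kappa=2p$ (a triangle inequality forces the maximal function onto $|u|^{2p}$), which would demand the strictly stronger $(1-\tfrac{s}{p\sigma})q_1>2$. To reach the stated range one must keep some of the $\dot H^\sigma$-smoothness in play on the far region rather than discarding it — e.g.\ by interpolating it back in, or by first performing a Littlewood--Paley decomposition of $u$ — together with a correspondingly delicate choice of the H\"older exponents on the annuli and of the balancing radius $R(x)$. A minor additional point is to verify that $\tfrac s\sigma q_2\in(1,\infty)$ so that the square-function characterisation is available there; this is ensured by the exponents occurring in the applications of the lemma.
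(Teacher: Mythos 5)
Your structural outline---reduce to a square function, split at a balancing radius, and let a Hardy--Littlewood maximal function absorb the far tail---is the correct strategy, and you have honestly diagnosed the weak point: with your square function
$\mathcal{S}_\tau g(x)=\bigl(\int|g(x+y)-g(x)|^2\,|y|^{-d-2\tau}\,dy\bigr)^{1/2}$,
the far region unavoidably delivers $M(|u|^{2p})$, which forces the stronger hypothesis $(1-\tfrac{s}{p\sigma})q_1>2$. The repair is not an interpolation or Littlewood--Paley refinement: the intended proof uses a \emph{different} but equally classical characterisation of $\||\nabla|^s\cdot\|_{L^q}$, due to Strichartz \cite{Strich} (and quoted later in the paper as \eqref{Ds def}), namely
\begin{align*}
\mathcal{D}_s(f)(x) := \Bigl( \int_0^\infty \Bigl|\int_{|y|<1} \bigl|f(x+ry)-f(x)\bigr|\,dy\Bigr|^2 \frac{dr}{r^{1+2s}}\Bigr)^{1/2},
\qquad \bigl\||\nabla|^s f\bigr\|_{L^q} \sim \bigl\|\mathcal{D}_s(f)\bigr\|_{L^q},
\end{align*}
for $0<s<1$ and $1<q<\infty$. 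The decisive feature is the $L^1$-average over the unit ball sitting \emph{inside} the square.

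Apply $|G(u(x+ry))-G(u(x))|\lesssim|u(x+ry)-u(x)|^p$ and split the $r$-integral at $A(x)$. For $r>A(x)$ one averages \emph{before} squaring:
\begin{align*}
\int_{|y|<1}\bigl(|u(x+ry)|^p+|u(x)|^p\bigr)\,dy\lesssim M\bigl(|u|^p\bigr)(x),
\end{align*}
so the far contribution is $M(|u|^p)(x)^2A(x)^{-2s}$---the maximal function lands on $|u|^p$, not $|u|^{2p}$, precisely because the averaging and the squaring happen in the opposite order from your $\mathcal{S}_{s,p}$. For $0<r<A(x)$, Jensen's inequality for the concave map $t\mapsto t^p$ gives $\int_{|y|<1}|u(x+ry)-u(x)|^p\,dy\lesssim\bigl(\int_{|y|<1}|u(x+ry)-u(x)|\,dy\bigr)^p$, and then H\"older in the $r$-variable with exponents $1/p,\ 1/(1-p)$ (redistributing the weight $r^{-1-2s}$; convergence near $r=0$ is exactly $\sigma>s/p$) yields $\mathcal{D}_\sigma(u)(x)^{2p}A(x)^{2(p\sigma-s)}$. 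Optimising $A(x)$ produces the pointwise bound
\begin{align*}
\mathcal{D}_s(G(u))(x)\lesssim M\bigl(|u|^p\bigr)(x)^{1-\frac{s}{p\sigma}}\,\mathcal{D}_\sigma(u)(x)^{\frac{s}{\sigma}},
\end{align*}
after which the H\"older/maximal/Strichartz chain you described closes the argument under exactly the stated condition $(1-\tfrac{s}{p\sigma})q_1>1$. In short, the two square-function characterisations are equivalent at the level of norms, but only $\mathcal{D}_s$ lets you trade the square for an average at the moment the H\"older bound on $G$ is invoked, and that trade is what weakens the required integrability of $q_1$ from the threshold $2$ in your argument to the threshold $1$ in the lemma.
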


As a direct consequence of the two fractional chain rules above and interpolation, we have the following

\begin{corollary}\label{C:s deriv}
Let $F(u)=|u|^p u$ and let $s\geq 0$ if $p$ is an even integer or $0\leq s<1+p$ otherwise.  Then, on any spacetime slab $I\times\R^d$ we have
\begin{align*}
\bigl\||\nabla|^{s} F(u)\bigr\|_{N^0(I)}
&\lesssim \bigl\||\nabla|^{s} u\bigr\|_{S^0(I)} \|u\|_{L_{t,x}^{\frac{p(d+2)}2}}^p
\end{align*}
and
\begin{align*}
\bigl\||\nabla|^{s} F(u)\bigr\|_{L_t^\infty L_x^{\frac{2d}{d+4}}}
&\lesssim \bigl\||\nabla|^{s} u\bigr\|_{L_t^\infty L_x^2} \|u\|_{L_t^\infty L_x^{dp/2}}^p.
\end{align*}
\end{corollary}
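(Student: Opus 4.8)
The plan is to prove Corollary~\ref{C:s deriv} by combining the Strichartz/Sobolev dual estimates with the fractional chain rules stated above, treating the two inequalities in parallel since they are structurally identical (one is a spacetime statement, the other a fixed-time statement).

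\textbf{Setup and the case $s\le 1$.} First I would reduce the nonlinear estimate to a purely Lebesgue-space bound. For the first inequality, recall that $N^0(I)$ contains $L_t^{q'}L_x^{r'}$ for any admissible $(q,r)$, and in particular the pair dual to $L_{t,x}^{2(d+2)/d}$. Thus it suffices to bound $\||\nabla|^s F(u)\|_{L_{t,x}^{2(d+2)/(d+4)}}$. Writing $\tfrac{d+4}{2(d+2)} = \tfrac d{2(d+2)} + \tfrac 2{d+2}$ and using H\"older in both time and space, I would apply the fractional chain rule (Lemma~\ref{L:Fract chain rule} when $s\in(0,1]$, with $G=F$, $G'(u)=O(|u|^p)$) to get
\begin{align*}
\bigl\||\nabla|^s F(u)\bigr\|_{L_{t,x}^{2(d+2)/(d+4)}}
\lesssim \bigl\||\nabla|^s u\bigr\|_{L_{t,x}^{2(d+2)/d}} \bigl\||u|^p\bigr\|_{L_{t,x}^{(d+2)/2}}
= \bigl\||\nabla|^s u\bigr\|_{L_{t,x}^{2(d+2)/d}} \|u\|_{L_{t,x}^{p(d+2)/2}}^p,
\end{align*}
and then absorb $\||\nabla|^s u\|_{L_{t,x}^{2(d+2)/d}}\lesssim \||\nabla|^s u\|_{S^0(I)}$ since $(2(d+2)/d, 2(d+2)/d)$ is admissible. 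The second inequality is the same computation without the time integration: $\tfrac{d+4}{2d}=\tfrac 12 + \tfrac 2d$, H\"older in $x$, and Lemma~\ref{L:Fract chain rule} give $\||\nabla|^s F(u)\|_{L_x^{2d/(d+4)}}\lesssim \||\nabla|^s u\|_{L_x^2}\|u\|_{L_x^{dp/2}}^p$ pointwise in $t$, then take the sup over $t$.

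\textbf{The case $s>1$.} For larger $s$ one must iterate: write $|\nabla|^s = |\nabla|^{s-\lfloor s\rfloor}\nabla^{\lfloor s\rfloor}$ (or peel off one derivative at a time), use the Leibniz/product rule (Lemma~\ref{L:product rule}) to distribute the classical derivatives $\nabla^{\lfloor s\rfloor}$ over the $\lfloor s\rfloor +1$ factors of $F(u)\sim u^{\lfloor s\rfloor+1}\cdot(\text{lower powers})$, arriving at terms of the schematic form $(\nabla^{a_1}u)\cdots(\nabla^{a_k}u)\cdot F^{(j)}(u)$ with $\sum a_i = \lfloor s\rfloor$ and $j$ factors of $u$ with no derivative; one then applies Lemma~\ref{L:product rule} once more to move the remaining fractional derivative onto a single factor, and Lemma~\ref{L:Fract chain rule} (or Lemma~\ref{L:FDFP}) to handle $|\nabla|^{s-\lfloor s\rfloor}F^{(j)}(u)$. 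The key point is the smoothness hypothesis $s<1+p$ (equivalently $2p^2-p(d-2)+4>0$), which guarantees $F$ is enough times differentiable — more precisely that $F^{(\lfloor s\rfloor)}$ is H\"older continuous of order $p-(\lfloor s\rfloor-1)$ with a positive exponent — so that the last chain rule applies; when $p$ is an even integer $F$ is a polynomial and no H\"older issue arises, which is why the hypothesis splits into two cases. Throughout one balances the H\"older exponents so that every factor $u$ with $a_i$ derivatives is measured in the Strichartz-admissible pair scaling like $|\nabla|^{s}u$ in $\dot H^{s}$-sense (interpolating between $L_t^\infty L_x^2$ and $L_{t,x}^{2(d+2)/d}$), and every bare factor of $u$ lands in $L_{t,x}^{p(d+2)/2}$ — a bookkeeping exercise in which the exponents are forced by scaling.

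\textbf{Main obstacle.} The only real difficulty is the combinatorial/bookkeeping step for $s>1$: organizing the Leibniz expansion of $|\nabla|^s(|u|^pu)$, verifying that each resulting product can be estimated with exactly the admissible exponents appearing on the right-hand side, and invoking the correct chain rule (smooth vs. H\"older-continuous) depending on how many classical derivatives have been stripped off. I expect the scaling to dictate the exponents uniquely at each step, so the estimate goes through mechanically once the expansion is set up; since the corollary is stated as a "direct consequence" of the two chain rules and interpolation, I would present the $s\le 1$ case in full and indicate the iteration for $s>1$, citing the hypothesis $s<1+p$ as precisely what makes the terminal chain rule legal. For the second (fixed-time) inequality the same expansion works verbatim with all $L_t$ norms replaced by $L_t^\infty$ and the only admissible space being $L_x^2$ paired with $L_x^{dp/2}$, so no separate argument is needed.
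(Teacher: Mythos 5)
Your proof is correct and takes essentially the same route the paper intends. The paper gives no explicit argument (it states the corollary as "a direct consequence of the two fractional chain rules above and interpolation"), but your outline matches exactly what the paper does when it spells out analogous estimates elsewhere: for $s\leq 1$ one reduces to the single dual Strichartz norm $L_{t,x}^{2(d+2)/(d+4)}$ (resp.\ the fixed-time $L_x^{2d/(d+4)}$) and applies Lemma~\ref{L:Fract chain rule}; for $1<s$ one writes $|\nabla|^s F(u)\sim|\nabla|^{s-1}\bigl(\nabla u\cdot F'(u)\bigr)$, uses the product rule Lemma~\ref{L:product rule}, applies Lemma~\ref{L:Fract chain rule} (if $p\geq1$) or Lemma~\ref{L:FDFP} (if $p<1$) to $|\nabla|^{s-1}F'(u)$, and then Sobolev embedding/interpolation to return all derivative norms to $\||\nabla|^s u\|_{S^0}$ and all underivatived factors to $\|u\|_{L_{t,x}^{p(d+2)/2}}$ — this is precisely the computation carried out, e.g., in the proof of Proposition~\ref{P:L^p breach}. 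Your identification of $s<1+p$ as the condition making the terminal chain rule legal (so that $F^{(\lfloor s\rfloor)}$ has H\"older exponent exceeding the remaining fractional order of differentiation) is the correct reading of the smoothness hypothesis, and the even-integer $p$ case indeed sidesteps it since $F$ is then a genuine polynomial.
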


Revisiting the proof of Lemma~\ref{L:FDFP}, we obtain the following lemma.  In actually, the result can de deduced directly from \eqref{fdfp2}
and Lemma~\ref{L:product rule}; however, the result \eqref{E:pointwise} appearing in the proof will be needed in Section~\ref{S:Reduct}.

\begin{lemma}\label{L:frac deriv of diff}
Let $G$ be a H\"older continuous function of order $0<p\leq 1$ and let $0<s<\sigma p<p$.  For $1<q,q_1,q_2,r_1,r_2,r_3<\infty$ such that
$\frac 1q=\frac 1{q_1}+\frac 1{q_2}=\frac 1{r_1}+\frac 1{r_2}+\frac1{r_3}$ we have
\begin{align*}
\bigl\| |\nabla|^{s} \bigl[w &\cdot \bigl(G(u+v) - G(u) \bigr)  \bigr]\bigr\|_q\\
&\lesssim \bigl\| |\nabla|^{s} w \bigr\|_{q_1} \|v\|_{pq_2}^p + \|w\|_{r_1} \|v\|_{(p-\frac s\sigma)r_2}^{p-\frac s\sigma}
    \bigl( \bigl\| |\nabla|^\sigma v\bigr\|_{\frac s\sigma r_3}+ \bigl\| |\nabla|^\sigma u\bigr\|_{\frac s\sigma r_3}\bigr)^{\frac s\sigma},
\end{align*}
provided $(1-p)r_1, (p-\frac s\sigma)r_2>1$.
\end{lemma}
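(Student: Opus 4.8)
The plan is to mimic the proof of Lemma~\ref{L:FDFP}, tracking how the difference $G(u+v)-G(u)$ is handled, but carrying along the extra factor $w$. The starting point is the Littlewood--Paley decomposition of the product $w\cdot[G(u+v)-G(u)]$. Write $h:=G(u+v)-G(u)$, so we must control $\||\nabla|^s(wh)\|_q$. Using the product rule (Lemma~\ref{L:product rule}) in the form $\||\nabla|^s(wh)\|_q \lesssim \||\nabla|^s w\|_{q_1}\|h\|_{q_2}+\|w\|_{r_1'}\||\nabla|^s h\|_{r_2'}$ for a suitable splitting of the exponents reduces matters to two tasks: (i) an $L^{q_2}$ bound on $h$ itself, and (ii) an $L^{r_2'}$ bound on $|\nabla|^s h$. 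Task~(i) is immediate from H\"older continuity of order $p$: pointwise $|h(x)|\lesssim |v(x)|^p$, hence $\|h\|_{pq_2}... $ — more precisely $\|h\|_{q_2}$ with $q_2$ chosen so that $\|v\|_{pq_2}^p$ appears, which is exactly the first term on the right-hand side after pairing with $\||\nabla|^s w\|_{q_1}$.

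The real content is Task~(ii): bounding $\||\nabla|^s h\|$ where $h=G(u+v)-G(u)$. Here I would reproduce the telescoping/Littlewood--Paley argument from the proof of Lemma~\ref{L:FDFP}. The key pointwise estimate that the excerpt flags as \eqref{E:pointwise} — ``the result \eqref{E:pointwise} appearing in the proof will be needed in Section~\ref{S:Reduct}'' — is presumably a bound of the shape
$$
\bigl||\nabla|^s h(x)\bigr| \lesssim \Bigl(|v(x)|^{p-\frac s\sigma} + \text{(lower order)}\Bigr)\cdot \bigl(M(|\nabla|^\sigma v)(x) + M(|\nabla|^\sigma u)(x)\bigr)^{\frac s\sigma},
$$
obtained by splitting $h$ into frequencies $\leq N$ and $>N$, using $\|P_{>N}h\|_\infty$-type control against $|\nabla|^\sigma$ of the high part and Bernstein plus the modulus of continuity against the low part, then optimizing in $N$ as in the Christ--Weinstein argument. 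Once such a pointwise bound is in hand, an application of H\"older with the exponent relation $\frac1{r_2'}=\frac1{r_2}+\frac1{r_3}$ (and the maximal function bounded on $L^{\frac s\sigma r_3}$, which needs $\frac s\sigma r_3>1$) yields
$$
\bigl\||\nabla|^s h\bigr\|_{r_2'} \lesssim \|v\|_{(p-\frac s\sigma)r_2}^{p-\frac s\sigma}\bigl(\||\nabla|^\sigma v\|_{\frac s\sigma r_3}+\||\nabla|^\sigma u\|_{\frac s\sigma r_3}\bigr)^{\frac s\sigma},
$$
and pairing this with $\|w\|_{r_1}$ gives the second term. The hypotheses $(1-p)r_1>1$ and $(p-\frac s\sigma)r_2>1$ are exactly what is needed for the H\"older exponents to be admissible (i.e., strictly between $1$ and $\infty$) in the relevant splittings, matching the role of $(1-\frac s{p\sigma})q_1>1$ in Lemma~\ref{L:FDFP}.

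The main obstacle I anticipate is getting the pointwise estimate \eqref{E:pointwise} in precisely the right form — in particular, ensuring that the difference structure $G(u+v)-G(u)$ (rather than a single $G(u)$) is exploited so that the ``amplitude'' factor is $|v|^{p-s/\sigma}$ and not $|u+v|^{p-s/\sigma}$ or $(|u|+|v|)^{p-s/\sigma}$. This requires care: one uses that $G$ is H\"older of order $p$ to control the \emph{low-frequency} part of $h$ by $|v|^p$ directly, while for the \emph{high-frequency} part one writes $h$ via the fundamental theorem of calculus only when $G$ is smoother, or — since $G$ is merely H\"older — one instead uses second-difference/finite-difference characterizations of $|\nabla|^s$ (Besov-type) and estimates $h(x+y)-h(x)$ by $\min\{|v(x)|^p + |v(x+y)|^p,\ \text{something involving } |\nabla|^\sigma\}$. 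Balancing these two regimes and interpolating is where the constraint $s<\sigma p<p$ enters and is the delicate step; everything downstream is a routine application of H\"older's inequality, Bernstein, and the Hardy--Littlewood maximal inequality. I would also double-check that the endpoint cases $p=1$ and $q_i,r_j$ near the forbidden values are correctly excluded by the stated strict inequalities, and that the decomposition of $\frac1q$ into three pieces is used consistently in both terms.
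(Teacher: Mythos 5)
Your approach is sound and, while recognizably built from the same toolkit (the Christ--Weinstein/Strichartz characterization of $|\nabla|^s$, a two-regime estimate balanced by optimizing in a cutoff scale, and the Hardy--Littlewood maximal function to close), it is organized differently than the paper's proof. The paper does \emph{not} peel off $w$ with the product rule first; instead it applies Strichartz's square-function characterization $\||\nabla|^s f\|_q\sim\|\mathcal D_s(f)\|_q$ directly to the full product $w\cdot[G(u+v)-G(u)]$ and proves a single pointwise inequality, reproduced as \eqref{E:pointwise}, in which $w$ enters through $\mathcal D_s(w)$ and the factor $\bigl[M(|w|^{1/(1-p)})\bigr]^{1-p}$. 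The triangle-inequality-on-differences step splits $\bigl|[wh](x+ry)-[wh](x)\bigr|$ into a piece with $\bigl|w(x+ry)-w(x)\bigr|\,|h(x)|$ (giving the $\mathcal D_s(w)|v|^p$ term) and a piece with $|w(x+ry)|\,|h(x+ry)-h(x)|$ (giving the maximal-function-weighted term). Your reorganization via Lemma~\ref{L:product rule} first, followed by a $w\equiv 1$ version of the same $\mathcal D_s$ argument applied to $G(u+v)-G(u)$ alone, is equally valid; in fact, it would dispense with the hypothesis $(1-p)r_1>1$, which in the paper's route is solely an artifact of running the $L^{1/(1-p)}$-maximal function of $w$ through the pointwise estimate. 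So your route is, if anything, a touch cleaner. Two cautions on the details you left sketchy: (a) the ``pointwise inequality'' you write down should be a bound on the square function $\mathcal D_s(h)$ (or a similar finite-difference functional), not on $|\nabla|^s h(x)$ itself, which is a nonlocal Fourier multiplier and admits no such pointwise domination; and (b) the hypotheses $(1-p)r_1>1$, $(p-\frac s\sigma)r_2>1$ are not mere H\"older-exponent bookkeeping --- they are exactly what makes the Hardy--Littlewood maximal operator bounded on the Lebesgue spaces $L^{(1-p)r_1}$ and $L^{(p-\frac s\sigma)r_2}$ into which $|w|^{1/(1-p)}$ and $|v|$ must respectively fall after the two-regime optimization. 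Getting the difference structure to surface $|v|^{p-s/\sigma}$ rather than $|u+v|^{p-s/\sigma}$ is achieved exactly as you suspect: use H\"older continuity of $G$ against $|v|^p$ for the tail of the $r$-integral and against increments of $u+v$ and of $u$ for the small-$r$ part, then optimize the cutoff.
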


\begin{proof}
In \cite{Strich}, Strichartz proved that for all Schwartz functions $f$, $1<q<\infty$, and $0<s<1$,
\begin{equation*}
\bigl\||\nabla|^s f \bigr\|_{L^q_x} \sim \bigl\| \mathcal{D}_s(f) \bigr\|_{L^q_x},
\end{equation*}
where
\begin{equation}\label{Ds def}
\mathcal{D}_s(f)(x) := \biggl( \int_0^\infty \biggl|\int_{|y|<1} \bigl|f(x+ry)-f(x)\bigr| \,dy\biggr|^2 \frac{dr}{r^{1+2s}}\biggr)^{1/2}.
\end{equation}
In view of this, the claim will follow from the pointwise inequality
\begin{align}\label{E:pointwise}
\mathcal{D}_s\bigl(w \cdot \bigl[G(&u+v) - G(u) \bigr]  \bigr)\\
&\lesssim \mathcal D_s(w)|v|^p + \bigl[ M(|w|^{\frac1{1-p}})\bigr]^{1-p}
    \bigl[ M(|v|)\bigr]^{p-\frac s\sigma}\bigl[ \mathcal{D}_\sigma(u+v) + \mathcal{D}_\sigma(u)\bigr]^{\frac s\sigma},\notag
\end{align}
where $M$ denotes the Hardy--Littlewood maximal function.

As $G$ is H\"older continuous of order $p$,
\begin{align}\label{long bound}
\bigl| \bigl(w & \cdot \bigl[G(u+v) - G(u) \bigr]\bigr)(x+ry) - \bigl(w \cdot \bigl[G(u+v) - G(u) \bigr]\bigr)(x) \bigr| \notag \\
&\leq \bigl| w(x+ry) - w(x)\bigr| \bigl|G(u+v)(x) - G(u)(x)  \bigr|\notag\\
&\quad + \bigl| w(x+ry)\bigr|\bigl| \bigl[G(u+v) - G(u) \bigr](x+ry) - \bigl[G(u+v) - G(u) \bigr](x)\bigr|\notag\\
&\lesssim \bigl| w(x+ry) - w(x)\bigr| |v(x)|^p  + H(x)
\end{align}
with
\begin{align}
H:= \bigl| w(x+ry)\bigr|\bigl| \bigl[G(u+v) - G(u) \bigr](x+ry) - \bigl[G(u+v) - G(u) \bigr](x)\bigr|.
\end{align}

Note that the first term on the right-hand side of \eqref{long bound} gives rise to the term $\mathcal D_s(w)|v|^p$ in \eqref{E:pointwise}.
Hence it remains to estimate the contribution of $H$.  In order to achieve this, we estimate $H$ in two different ways:
\begin{align}\label{big r}
H\lesssim \bigl| w(x+ry) \bigr|\Bigl[ |v(x+ry)|^p + |v(x)|^p \Bigr]
\end{align}
and also
\begin{align}\label{small r}
H\lesssim \bigl| w(x+ry)\bigr| \Bigl[\bigl|(u+v)(x+ ry) - (u+v)(x)\bigr|^p +\bigl|u(x+ ry) - u(x)\bigr|^p \Bigr].
\end{align}

Using H\"older's inequality and \eqref{big r}, we see that
\begin{align*}
\int_{A(x)}^\infty \biggl|\int_{|y|<1} H\,dy\biggr|^2 \frac{dr}{r^{1+2s}}
&\lesssim \bigl[ M(|w|^{\frac1{1-p}})(x)\bigr]^{2(1-p)} \bigl[ M(|v|)(x)\bigr]^{2p} \int_{A(x)}^\infty \frac{dr}{r^{1+2s}} \\
&\lesssim \bigl[A(x)\bigr]^{-2s}\bigl[ M(|w|^{\frac1{1-p}})(x)\bigr]^{2(1-p)} \bigl[ M(|v|)(x)\bigr]^{2p}.
\end{align*}
The precise value of $A(x)$ will be determined below.

We now turn our attention to small values of $r$.  Using \eqref{small r} together with H\"older's inequality, we see that
\begin{align*}
\int_{|y|<1}\!\!H\,dy
&\lesssim \bigl[M\bigl(|w|^{\frac{1}{1-p}}\bigr)(x)\bigr]^{1-p} \biggl[ \int_{|y|<1} \bigl|(u+v)(x+ ry) - (u+v)(x)\bigr| \\
&\qquad \qquad \qquad \qquad \qquad \qquad + \bigl|u(x+ ry) - u(x)\bigr| \ dy \biggr]^p
\end{align*}
and so, applying H\"older's inequality again, we find
\begin{align*}
\int_0^{A(x)} \biggl|\int_{|y|<1} &H\, dy\biggr|^2 \frac{dr}{r^{1+2s}} \\
&\lesssim \bigl[M\bigl(|w|^{\frac{1}{1-p}}\bigr)(x)\bigr]^{2(1-p)}
    \bigl[A(x)\bigr]^{2(\sigma p -s)} \bigl[ \mathcal{D}_\sigma(u+v)(x) + \mathcal{D}_\sigma(u)(x)\bigr]^{2p}.
\end{align*}

Putting things together and optimizing the choice of $A(x)$, we derive \eqref{E:pointwise}.  This finishes the proof of the lemma.
\end{proof}

The next result is formally similar to Lemma~\ref{L:FDFP}.  The proof is simple; see the appendix in \cite{Notes}.  It is used in the proof of
Lemma~\ref{L:recurrence}.

\begin{lemma}[Nonlinear Bernstein, \cite{Notes}]\label{L:Nonlin Bernstein}
Let $G:\C\to\C$ be H\"older continuous of order $0<p\leq 1$. Then
$$
\|P_N G(u) \|_{L^{q/p}_x(\R^d)} \lesssim N^{-p} \|\nabla u\|_{L^q_x(\R^d)}^p
$$
for any $1\leq q < \infty$.
\end{lemma}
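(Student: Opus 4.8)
The plan is to exploit the fact that $P_N$ is convolution with a \emph{mean-zero} kernel, so it only ever sees differences of $G(u)$; these differences are then controlled by H\"older continuity together with the fundamental theorem of calculus, and everything is assembled by Minkowski's inequality.

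First I would write $P_N f = f * K_N$ for a kernel with $\int_{\R^d} K_N(y)\, dy = 0$ (the Littlewood--Paley multiplier $\varphi(\xi/N)-\varphi(2\xi/N)$ vanishes in a neighbourhood of $\xi=0$), with $K_N(y) = N^d K_1(Ny)$ by scaling and with $K_1$ Schwartz. The mean-zero property gives
\begin{equation*}
P_N G(u)(x) = \int_{\R^d} K_N(y)\bigl[G(u)(x-y) - G(u)(x)\bigr]\, dy .
\end{equation*}
Since $G$ is H\"older continuous of order $p$ and $u$ is (say) Schwartz, the fundamental theorem of calculus yields the pointwise bound
\begin{equation*}
\bigl|G(u)(x-y) - G(u)(x)\bigr| \lesssim |u(x-y)-u(x)|^p \leq |y|^p \Bigl( \int_0^1 |\nabla u(x-ty)|\, dt \Bigr)^p ,
\end{equation*}
hence $\bigl|P_N G(u)(x)\bigr| \lesssim \int_{\R^d} |K_N(y)|\,|y|^p \bigl( \int_0^1 |\nabla u(x-ty)|\, dt \bigr)^p\, dy$.

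It then remains to take the $L^{q/p}_x$-norm --- note $q/p \geq q \geq 1$, so $\|\,\cdot\,\|_{L^{q/p}_x}$ is a genuine norm and Minkowski's integral inequality applies --- and to move it inside the $y$- and $t$-integrals. Using Minkowski twice (once to pull $\int\,dy$ out, once to pull $\int_0^1 dt$ out of $\|\,\cdot\,\|_{L^q_x}$), the identity $\|f^p\|_{L^{q/p}_x} = \|f\|_{L^q_x}^p$ for $f \geq 0$, and translation invariance of Lebesgue measure, I would arrive at
\begin{equation*}
\bigl\|P_N G(u)\bigr\|_{L^{q/p}_x} \lesssim \Bigl( \int_{\R^d} |K_N(y)|\,|y|^p\, dy \Bigr) \|\nabla u\|_{L^q_x}^p ,
\end{equation*}
and the change of variables $z = Ny$ gives $\int_{\R^d} |K_N(y)|\,|y|^p\, dy = N^{-p} \int_{\R^d} |K_1(z)|\,|z|^p\, dz \lesssim N^{-p}$ because $K_1$ is Schwartz. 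Extending from Schwartz $u$ to general $u$ with $\nabla u \in L^q_x$ by density completes the argument.

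I do not expect any serious obstacle: this is genuinely a short computation, which is why the statement is advertised as having a simple proof. The only point worth a moment's thought is to route the final estimate through Minkowski's inequality rather than dominating $\int_0^1 |\nabla u(x-ty)|\, dt$ by $M(\nabla u)(x)$; the maximal-function route is slicker but would fail at the endpoint $q=1$, which is included in the statement.
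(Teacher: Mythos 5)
Your proof is correct and is exactly the argument the paper defers to: exploit the mean-zero of the Littlewood--Paley kernel, then H\"older continuity of $G$, then the fundamental theorem of calculus, then Minkowski's inequality (twice) together with the identity $\|f^p\|_{L^{q/p}_x}=\|f\|_{L^q_x}^p$ and the scaling of $K_N$. A minor quibble with your closing remark: the obstruction to the maximal-function shortcut is not merely the $q=1$ endpoint but that the one-dimensional average $\int_0^1|\nabla u(x-ty)|\,dt$ is not pointwise dominated by the Hardy--Littlewood maximal function $M(\nabla u)(x)$ in the first place; your Minkowski route sidesteps this correctly.
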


\subsection{Concentration compactness}\label{SS:cc}
In this subsection we record the linear profile decomposition statement which will lead to the reduction in Theorem~\ref{T:reduct}.
We first recall the symmetries of the equation \eqref{nls} which fix the initial surface $t=0$.

\begin{definition}[Symmetry group]\label{D:sym}
For any phase $\theta \in \R/2\pi \Z$, position $x_0\in \R^d$, and scaling parameter $\lambda > 0$, we define a unitary
transformation $g_{\theta,x_0,\lambda}: \dot H^{s_c}_x(\R^d) \to \dot H^{s_c}_x(\R^d)$ by
$$
[g_{\theta,x_0, \lambda} f](x) :=  \lambda^{-\frac2p} e^{i\theta}  f\bigl( \lambda^{-1}(x-x_0) \bigr).
$$
Recall that $s_c:=\tfrac d2-\tfrac 2p$.  Let $G$ denote the collection of such transformations.  For a function $u: I \times \R^d \to \C$,
we define $T_{g_{\theta,x_0,\lambda}} u: \lambda^2 I \times \R^d \to \C$ where $\lambda^2 I := \{ \lambda^2 t: t \in I \}$ by the formula
$$
[T_{g_{\theta,x_0, \lambda}} u](t,x) :=  \lambda^{-\frac2p} e^{i\theta} u\bigl( \lambda^{-2}t, \lambda^{-1}(x-x_0)\bigr).
$$
Note that if $u$ is a solution to \eqref{nls}, then $T_{g}u$ is a solution to \eqref{nls} with initial data $g u_0$.
\end{definition}

\begin{remark} It is easy to verify that $G$ is a group and that the map $g \mapsto T_g$ is a homomorphism.
The map $u \mapsto T_g u$ maps solutions to \eqref{nls} to solutions with the same scattering size as $u$, that is, $S(T_g u)=S(u)$.
Furthermore, $u$ is a maximal-lifespan solution if and only if $T_g u$ is a maximal-lifespan solution.
\end{remark}

We are now ready to state the linear profile decomposition; in the generality needed here, this was proved in \cite{Shao:profile}.
For $s_c=0$ the linear profile decomposition was proved in \cite{BegoutVargas, carles-keraani, merle-vega}, while for $s_c=1$ it
was established in \cite{keraani-h1}.

\begin{lemma} [Linear profile decomposition, \cite{Shao:profile}]\label{L:cc}
Fix $s_c>0$ and let $\{u_n\}_{n\geq 1}$ be a sequence of functions bounded in $\dot H_x^{s_c}(\R^d)$. Then, after passing to a
subsequence if necessary, there exist functions $\{\phi^j\}_{j\geq 1}\subset \dot H_x^{s_c}(\R^d)$, group elements
$\gnj \in G$, and times $\tnj\in \R$ such that for all $J\geq 1$ we have the decomposition
\begin{align*}
u_n = \sum_{j=1}^J \gnj e^{i\tnj\Delta}\phi^j + \wnJ
\end{align*}
with the following properties:
\begin{itemize}
\item $\wnJ \in \dot H^{s_c}_x(\R^d)$ and obey
\begin{equation*}
\lim_{J\to \infty}\limsup_{n\to\infty} \bigl\| e^{it\Delta}\wnJ \bigr\|_{L_{t,x}^{\frac{p(d+2)}2}(\R\times\R^d)}=0.
\end{equation*}
\item For any $j \neq j'$,
\begin{align*}
\frac{\lambda_n^j}{\lambda_n^{j'}} + \frac{\lambda_n^{j'}}{\lambda_n^{j}}
    + \frac{|x_n^j-x_n^{j'}|^2}{\lambda_n^j \lambda_n^{j'}}
    + \frac{\bigl|t_n^j(\lambda_n^j)^2- t_n^{j'}(\lambda_n^{j'})^2\bigr|}{\lambda_n^j\lambda_n^{j'}}\to\infty
    \quad \text{as } n\to \infty.
\end{align*}
\item For any $J \geq 1$ we have the decoupling properties:
\begin{equation*}
\lim_{n \to \infty} \Bigl[ \bigl\||\nabla|^{s_c} u_n\bigr\|_2^2 - \sum_{j=1}^J \bigl\||\nabla|^{s_c} \phi^j \bigr\|_2^2
            - \bigl\||\nabla|^{s_c} \wnJ\bigr\|^2_2 \Bigr] = 0
\end{equation*}
and for any $1\leq j\leq J$,
\begin{align*}
e^{-i\tnj\Delta}[(\gnj)^{-1}\wnJ] \to 0 \quad \text{weakly in } \dot H_x^{s_c} \text{ as } n\to \infty.
\end{align*}
\end{itemize}
\end{lemma}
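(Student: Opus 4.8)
The plan is to construct the profiles $\phi^j$, group elements $\gnj$, and times $\tnj$ by the standard inductive extraction procedure, built around a single extraction lemma: if a sequence $\{w_n\}$ is bounded in $\dot H^{s_c}_x$ and $A := \limsup_n \|e^{it\Delta} w_n\|_{L^{p(d+2)/2}_{t,x}(\R\times\R^d)} > 0$, then (after passing to a subsequence) there exist $\phi \in \dot H^{s_c}_x$ with $\|\,|\nabla|^{s_c}\phi\|_2 \gtrsim A^\gamma(\limsup_n\|w_n\|_{\dot H^{s_c}_x})^{-\delta}$ for suitable exponents $\gamma,\delta$, together with $g_n \in G$ and $t_n \in \R$ such that $e^{-it_n\Delta}g_n^{-1}w_n \rightharpoonup \phi$ weakly in $\dot H^{s_c}_x$. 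The proof of this extraction lemma is the heart of the matter: one interpolates the $L^{p(d+2)/2}_{t,x}$ Strichartz norm between a Strichartz-bounded quantity and a frequency-localized $L^\infty_{t,x}$ quantity (via Bernstein, Lemma~\ref{Bernstein}, and the dispersive estimate \eqref{dispersive-p}), concluding that for some dyadic $N_n$ one has $\|e^{it\Delta}P_{N_n}w_n\|_{L^\infty_{t,x}} \gtrsim A^\gamma \|w_n\|_{\dot H^{s_c}_x}^{-\delta}$; this in turn locates points $(t_n,x_n)$ where $|e^{it_n\Delta}P_{N_n}w_n(x_n)|$ is comparable to this bound. Rescaling by $\lambda_n := N_n^{-1}$ and translating by $x_n$, i.e.\ applying $g_n^{-1}$ with $g_n = g_{0,x_n,\lambda_n}$, and then applying $e^{-it_n\Delta}$, one extracts a weak limit $\phi$ whose $\dot H^{s_c}_x$-norm is bounded below because testing against an appropriate bump (the kernel that produces a pointwise value) detects the concentration. (This is exactly the argument of \cite{keraani-h1, Shao:profile}, adapted to general $s_c$.)

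Granting the extraction lemma, I would run the induction on $J$ as usual. Set $w_n^0 := u_n$. Given $w_n^{J-1}$, if $\limsup_n\|e^{it\Delta}w_n^{J-1}\|_{L^{p(d+2)/2}_{t,x}} = 0$ we stop and set $\phi^j = 0$ for $j \geq J$; otherwise apply the extraction lemma to get $\phi^J$, $g_n^J$, $t_n^J$, and define $w_n^J := w_n^{J-1} - g_n^J e^{it_n^J\Delta}\phi^J$. A diagonal subsequence argument makes all of this simultaneous in $J$. The weak convergence $e^{-it_n^J\Delta}(g_n^J)^{-1}w_n^J \rightharpoonup 0$ is immediate from the construction (the $\phi^J$ term is exactly subtracted off), and for $j < J$ it follows by induction once orthogonality of the parameters is established. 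The $\dot H^{s_c}_x$ Pythagorean decoupling $\|\,|\nabla|^{s_c}u_n\|_2^2 = \sum_{j\le J}\|\,|\nabla|^{s_c}\phi^j\|_2^2 + \|\,|\nabla|^{s_c}w_n^J\|_2^2 + o(1)$ follows by expanding the square and using that the cross terms $\langle |\nabla|^{s_c} g_n^j e^{it_n^j\Delta}\phi^j, |\nabla|^{s_c} g_n^{j'}e^{it_n^{j'}\Delta}\phi^{j'}\rangle \to 0$ for $j\neq j'$ and $\langle |\nabla|^{s_c} g_n^j e^{it_n^j\Delta}\phi^j, |\nabla|^{s_c} w_n^J\rangle \to 0$ — both reducing, after undoing the symmetry $g_n^j$, to weak convergence statements.

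The two points that require real work are the parameter orthogonality and the termination of the $L^{p(d+2)/2}_{t,x}$ tail. For orthogonality: if the asymptotic orthogonality condition $\frac{\lambda_n^j}{\lambda_n^{j'}} + \frac{\lambda_n^{j'}}{\lambda_n^j} + \frac{|x_n^j - x_n^{j'}|^2}{\lambda_n^j\lambda_n^{j'}} + \frac{|t_n^j(\lambda_n^j)^2 - t_n^{j'}(\lambda_n^{j'})^2|}{\lambda_n^j\lambda_n^{j'}}$ stays bounded along a subsequence, then (passing to a further subsequence so the ratios converge) $(g_n^{j'})^{-1}g_n^j e^{i(t_n^j - t_n^{j'})\Delta}$ converges strongly to an invertible operator, so $e^{-it_n^{j'}\Delta}(g_n^{j'})^{-1}w_n^{j'-1} = e^{-it_n^{j'}\Delta}(g_n^{j'})^{-1}(w_n^{j}+g_n^j e^{it_n^j\Delta}\phi^j)$ would have to converge weakly to a nonzero multiple of (the image of) $\phi^j$, contradicting that this quantity tends weakly to $0$ by the construction at stage $j$; hence the parameters are orthogonal, which is what one needs to push the cross-term estimates through. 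For the tail: one shows the quantity $A_J := \limsup_n\|e^{it\Delta}w_n^J\|_{L^{p(d+2)/2}_{t,x}}$ tends to $0$ as $J\to\infty$, because if $A_J \geq \varepsilon > 0$ for all $J$, the extraction lemma forces $\|\,|\nabla|^{s_c}\phi^{J+1}\|_2 \gtrsim \varepsilon^\gamma M^{-\delta}$ (with $M := \sup_n\|u_n\|_{\dot H^{s_c}_x}$, using that $\|w_n^J\|_{\dot H^{s_c}_x}\le M + o(1)$ by decoupling), so infinitely many $\phi^j$ have $\dot H^{s_c}_x$-norm bounded below by a fixed positive constant, contradicting $\sum_j\|\,|\nabla|^{s_c}\phi^j\|_2^2 \le M^2 < \infty$ — the latter being a consequence of the decoupling identity. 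I expect the delicate bookkeeping in the extraction lemma — tracking how the interpolation exponents depend on $s_c$ and verifying that the frequency-localized pointwise lower bound survives the rescaling and translation — to be the main obstacle; the inductive scaffolding above is routine once that is in hand.
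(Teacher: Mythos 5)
The paper does not actually prove Lemma~\ref{L:cc}: it states it and defers the proof to \cite{Shao:profile} (with antecedents in \cite{keraani-h1} for $s_c=1$ and \cite{BegoutVargas, carles-keraani, merle-vega} for $s_c=0$). Your outline reproduces the standard Keraani-type scheme used in those references --- a single inverse-Strichartz extraction lemma obtained from Bernstein and the dispersive estimate, an inductive extraction of profiles, Pythagorean decoupling via cross-term weak convergence, parameter orthogonality by contradiction, and termination via summability of $\|\,|\nabla|^{s_c}\phi^j\|_2^2$ --- so the approach matches what the paper is citing. One slip worth flagging: in the orthogonality argument you write $w_n^{j'-1} = w_n^{j}+g_n^j e^{it_n^j\Delta}\phi^j$, but that identity holds only when $j'=j+1$ (the correct telescoping identity is $w_n^{j-1}=w_n^j+g_n^je^{it_n^j\Delta}\phi^j$). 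For general $j<j'$ the argument should instead be run as an induction on $j'-j$: one uses that $e^{-it_n^j\Delta}(g_n^j)^{-1}w_n^{j-1}\rightharpoonup\phi^j\neq 0$ while $e^{-it_n^{j'}\Delta}(g_n^{j'})^{-1}w_n^{j'-1}\rightharpoonup\phi^{j'}$, and that if the group elements failed to be asymptotically orthogonal then the adjoint $(g_n^{j'})^{-1}g_n^j e^{i(t_n^j-t_n^{j'})\Delta}$ would converge strongly (after passing to a subsequence), allowing one to transfer the weak-limit statements between the $j$-frame and the $j'$-frame and reach a contradiction with the vanishing of weak limits established at intermediate stages. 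This is a bookkeeping matter, not a gap in strategy.
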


\subsection{A Gronwall inequality}

Our last technical tool is a form of Gronwall's inequality that involves both the past and the future, `acausal' in the terminology of \cite{tao:book}.
We import it from \cite{Berbec}, where it was used for precisely the same purpose as it will be here.

\begin{lemma}[Acausal Gronwall inequality, \cite{Berbec}]\label{L:Gronwall}
Given $\gamma>0$, $0<\eta<\tfrac12(1-2^{-\gamma})$, and $\{b_k\}\in\ell^\infty(\Z^+)$, let $x_k\in\ell^\infty(\Z^+)$ be a
non-negative sequence obeying
\begin{align*}
x_k \leq b_k + \eta \sum_{l=0}^\infty 2^{-\gamma|k-l|} x_l \qquad \text{for all $k\geq 0$.}
\end{align*}
Then
\begin{align*}
x_k \lesssim \sum_{l=0}^{k} r^{|k-l|} b_l  \qquad \text{for all $k\geq 0$}
\end{align*}
for some $r=r(\eta)\in (2^{-\gamma},1)$.  Moreover, $r\downarrow 2^{-\gamma}$ as $\eta\downarrow 0$.
\end{lemma}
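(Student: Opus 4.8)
The plan is to read the hypothesis as the pointwise inequality $x\le b+\eta Kx$ on $\ell^\infty(\Z^+)$, where $x,b\ge0$ and $K$ is convolution with the kernel $k_n:=2^{-\gamma|n|}$, and then to exhibit an explicit exponential-type supersolution and invoke a comparison principle. First I would record two elementary facts about $K$. (a) Its operator norm satisfies $\|K\|_{\ell^\infty\to\ell^\infty}=\sup_k\sum_{l\ge0}2^{-\gamma|k-l|}\le\sum_{n\in\Z}2^{-\gamma|n|}=\tfrac{1+2^{-\gamma}}{1-2^{-\gamma}}$, so the hypothesis $\eta<\tfrac12(1-2^{-\gamma})$ forces $\theta:=\eta\|K\|_{\ell^\infty\to\ell^\infty}\le\eta\tfrac{1+2^{-\gamma}}{1-2^{-\gamma}}<\tfrac12(1+2^{-\gamma})<1$. (b) For each $r\in(2^{-\gamma},1)$ the exponential envelope $e^{(m)}_k:=r^{|k-m|}$ is almost $K$-invariant: a geometric-series computation gives $\sum_{l\ge0}2^{-\gamma|k-l|}r^{|l-m|}\le C_{\gamma,r}\,r^{|k-m|}$ for a finite constant $C_{\gamma,r}$, that is, $Ke^{(m)}\le C_{\gamma,r}e^{(m)}$ pointwise.

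With these in hand I would run the comparison argument. Fix $r=r(\eta)\in(2^{-\gamma},1)$ with $\eta C_{\gamma,r}<1$ (to be justified below), set $y_k:=\sum_{l\ge0}r^{|k-l|}b_l\in\ell^\infty$ and $\bar x:=(1-\eta C_{\gamma,r})^{-1}y$. By linearity and fact (b), $Ky=\sum_l b_lKe^{(l)}\le C_{\gamma,r}y$, and since $y_k\ge b_k$ we obtain $b+\eta K\bar x\le b+\tfrac{\eta C_{\gamma,r}}{1-\eta C_{\gamma,r}}y\le\bar x$, so $\bar x$ is a supersolution. The difference $z:=x-\bar x\in\ell^\infty$ then obeys $z\le\eta Kz$, whence $\sup_k z_k\le\eta\sup_k(Kz^+)_k\le\theta\max(0,\sup_k z_k)$; since $\theta<1$ this forces $\sup_k z_k\le0$, i.e. $x_k\le\bar x_k=(1-\eta C_{\gamma,r})^{-1}\sum_{l\ge0}r^{|k-l|}b_l$, which is the claimed bound with implied constant $(1-\eta C_{\gamma,r})^{-1}$.

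It remains to choose $r=r(\eta)$ with $\eta C_{\gamma,r}<1$ and $r\downarrow2^{-\gamma}$ as $\eta\downarrow0$; this quantitative step is the \textbf{main obstacle}. Splitting the defining sum of $C_{\gamma,r}$ according to whether $l$ lies below $\min(k,m)$, between $\min$ and $\max$, or above $\max(k,m)$, and summing the three resulting geometric series (using $2^{-\gamma}<r$), yields a bound of the shape $C_{\gamma,r}\lesssim\tfrac{1+2^{-\gamma}r}{1-2^{-\gamma}r}+\tfrac{r}{r-2^{-\gamma}}$; in particular $C_{\gamma,r}\sim(r-2^{-\gamma})^{-1}$ as $r\downarrow2^{-\gamma}$ while the other term stays bounded. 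Choosing $r$ of the form $2^{-\gamma}(1+c\eta)$ for a small absolute constant $c$ (admissible since $\eta<\tfrac12(1-2^{-\gamma})$ keeps this $<1$) then gives, say, $\eta C_{\gamma,r}\le\tfrac12$, and manifestly $r\to2^{-\gamma}$ as $\eta\to0$. This completes the proof.

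For completeness I would note two variants of the soft part. One may replace the supersolution/comparison step by the Neumann series $x\le\sum_{j\ge0}\eta^jK^jb$ — the remainder $\eta^nK^nx$ tends to $0$ in $\ell^\infty$ by fact (a) — together with the iterated envelope bound $K^j\delta_m\le C_{\gamma,r}^j e^{(m)}$. Alternatively, one can compute the resolvent $(I-\eta K)^{-1}$ directly from the generating function $\hat K(z)=\tfrac{z(1-2^{-2\gamma})}{(1-2^{-\gamma}z)(z-2^{-\gamma})}$ and read off the geometric decay rate $r$ of its kernel from the location, inside the unit disc, of the root of $1-\eta\hat K(z)$; this route makes the assertion $r\downarrow2^{-\gamma}$ as $\eta\downarrow0$ especially transparent, since that root is pushed toward the pole at $z=2^{-\gamma}$ as $\eta\to0$.
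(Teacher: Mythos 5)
Your comparison/supersolution argument is sound in outline, and the soft steps — the bound $\eta\|K\|_{\ell^\infty\to\ell^\infty}\le \eta\tfrac{1+2^{-\gamma}}{1-2^{-\gamma}}<1$, the almost-invariance $Ke^{(m)}\le C_{\gamma,r}\,e^{(m)}$ for $r\in(2^{-\gamma},1)$, and the maximum-principle step showing $z:=x-\bar x\le 0$ — are all correct. However, what you actually prove is $x_k\le(1-\eta C_{\gamma,r})^{-1}\sum_{l=0}^\infty r^{|k-l|}b_l$, with the sum over \emph{all} $l$, whereas the lemma asserts the cutoff sum $\sum_{l=0}^{k}r^{k-l}b_l$. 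You should have flagged this mismatch explicitly: your envelope $y_k=\sum_{l\ge0}r^{|k-l|}b_l$ cannot be replaced by the one-sided sum, since applying $K$ to the one-sided sum produces contributions from indices above $k$ that the one-sided envelope does not dominate, so the supersolution step genuinely breaks. In fact the one-sided bound as printed cannot hold with an $(\eta,\gamma)$-dependent constant: take $b_l=\delta_{lK}$ and $x=(I-\eta K)^{-1}b$ (nonnegative, in $\ell^\infty$ since $\eta\|K\|<1$, and satisfying the recursion with equality); the Neumann series gives $x_k\ge \eta\,2^{-\gamma(K-k)}>0$ for every $k<K$ while $\sum_{l\le k}r^{k-l}\delta_{lK}=0$. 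In the application (Proposition~\ref{P:L^p breach}) the input $b_l$ decays geometrically, so the $l>k$ part of the two-sided sum is dominated by the $l=k$ term and the two formulations coincide; thus the version you prove is exactly what is used, and the discrepancy is a defect of the printed statement rather than of your mathematics. This paper cites \cite{Berbec} for the proof and gives none of its own; your supersolution phrasing and the Neumann-series variant you sketch at the end are both standard and essentially equivalent.

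Your quantitative step also needs a correction of sign. You correctly find $C_{\gamma,r}\sim(r-2^{-\gamma})^{-1}$ as $r\downarrow 2^{-\gamma}$, so $\eta C_{\gamma,r}<1$ forces $r-2^{-\gamma}$ to be bounded \emph{below} by a multiple of $\eta$, not above. With your ansatz $r=2^{-\gamma}(1+c\eta)$ one gets $\eta C_{\gamma,r}\gtrsim \eta\cdot\tfrac{1}{c\eta 2^{-\gamma}}=2^{\gamma}/c$, so $c$ must be taken \emph{large}, and indeed of size comparable to $2^{\gamma}$ (one also needs $c\eta<2^{\gamma}-1$ to keep $r<1$, which the hypothesis $\eta<\tfrac12(1-2^{-\gamma})$ permits precisely when $c\le 2^{\gamma+1}$). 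Calling $c$ a ``small absolute constant'' is therefore wrong on both counts; the qualitative conclusion $r(\eta)\downarrow 2^{-\gamma}$ as $\eta\downarrow 0$ is nonetheless unaffected.
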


%
%
%
%

\section{Local well-posedness}\label{S:Local theory}

In this section we develop the local well-posedness theory for \eqref{nls}.  The arguments we use are inspired by previous work
on nonlinear Schr\"odinger equations at critical regularity.  For $s_c\in[0,1]$ the standard local well-posedness theory
(see Theorem~\ref{T:standard lwp} below) was established by Cazenave and Weissler, \cite{cwI}; see also \cite{cazenave, tao:book}.
For stability results (see Theorems~\ref{T:stab cubic} and \ref{T:stab} below) in the mass- and energy-critical settings (i.e. $s_c=0,1$), see
\cite{ckstt:gwp, RV, TV, tvz:cc}.  In this section, we mainly follow the exposition in \cite{Notes}, which revisits the local
theory for the mass- and energy-critical NLS.

We start with the following standard local well-posedness result, for which one assumes that the initial data lies in the inhomogeneous
critical Sobolev space.  This assumption simplifies the proof and can be removed a posteriori by using the stability results proved below.

\begin{theorem}[Standard local well-posedness]\label{T:standard lwp}
Let $d\geq 1$, $s_c\geq 0$, and let $u_0 \in H^{s_c}_x(\R^d)$.  Assume in addition that $s_c<1+p$ if $p$ is not an even integer.
Then there exists $\eta_0=\eta_0(d)>0$ such that if $0<\eta \leq \eta_0$ and $I$ is a compact interval containing zero such that
\begin{align}\label{LT:small free evol}
\bigl\| |\nabla|^{s_c} e^{it\Delta} u_0\bigr\|_{L_t^{p+2} L_x^{\frac{2d(p+2)}{2(d-2)+dp}} (I\times\R^d)} \leq \eta,
\end{align}
then there exists a unique solution $u$ to \eqref{nls} on $I\times\R^d$.  Moreover, we have the bounds
\begin{align}
\bigl\| |\nabla|^{s_c} u \bigr\|_{L_t^{p+2} L_x^{\frac{2d(p+2)}{2(d-2)+dp}} (I\times\R^d)} &\leq 2\eta \label{LT:small sol u}\\
\bigl\| |\nabla|^{s_c} u \bigr\|_{S^0 (I\times\R^d)} &\lesssim \bigl\| |\nabla|^{s_c} u_0 \bigr\|_{L_x^2} + \eta^{1+p} \label{LT:bdd sc}\\
\| u \|_{S^0 (I\times\R^d)} &\lesssim \| u_0\|_{L_x^2}. \label{LT:bdd s0}
\end{align}
\end{theorem}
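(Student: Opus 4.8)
The plan is the standard contraction-mapping argument in a suitably chosen Strichartz-type space, using the hypothesis \eqref{LT:small free evol} as the smallness input. Fix the exponent pair $(q,r) = \bigl(p+2, \tfrac{2d(p+2)}{2(d-2)+dp}\bigr)$; a direct check shows this pair is $\dot H^{s_c}$-admissible in the sense that $|\nabla|^{s_c}$ maps the corresponding Strichartz space into (a piece of) $S^0$, and that it is matched by Sobolev embedding to the nonlinearity $F(u) = |u|^p u$, so that Corollary~\ref{C:s deriv} (with the constraint $s_c < 1+p$ for non-even $p$ guaranteeing we can distribute $s_c$ derivatives onto $F$) applies. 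Define the solution map
\[
\Phi(u)(t) := e^{it\Delta} u_0 - i \int_0^t e^{i(t-s)\Delta} F(u(s))\,ds
\]
and the ball
\[
B := \bigl\{ u : \bigl\| |\nabla|^{s_c} u \bigr\|_{L_t^q L_x^r(I\times\R^d)} \leq 2\eta, \ \|u\|_{S^0(I\times\R^d)} \lesssim \|u_0\|_{L_x^2}, \ \bigl\||\nabla|^{s_c} u\bigr\|_{S^0(I)} \leq 2C\|\,|\nabla|^{s_c}u_0\|_{L_x^2} \bigr\}.
\]

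First I would show $\Phi$ maps $B$ to $B$. Applying Lemma~\ref{L:Strichartz} and Corollary~\ref{C:s deriv}, together with the Sobolev embedding relating $\|u\|_{L_{t,x}^{p(d+2)/2}}$ to $\bigl\||\nabla|^{s_c} u\bigr\|$ in the $L_t^qL_x^r$ norm (Bernstein/Sobolev and Hölder in time on the compact interval), one bounds $\bigl\||\nabla|^{s_c}\Phi(u)\bigr\|$ in the $L_t^qL_x^r$ norm by $\eta + C \eta^{1+p}$, which is $\leq 2\eta$ once $\eta \leq \eta_0$ is small; this is exactly \eqref{LT:small sol u}. Feeding this back into the full Strichartz estimate gives \eqref{LT:bdd sc}, and estimating $\|u\|_{S^0}$ (now with no derivatives, using $\|F(u)\|_{N^0} \lesssim \|u\|_{S^0}\|u\|_{L_{t,x}^{p(d+2)/2}}^p$ and the already-controlled scattering norm) gives \eqref{LT:bdd s0}. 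For the contraction, I would estimate $|\nabla|^{s_c}[F(u)-F(v)]$ in $N^0$; when $p\geq 1$ this is routine via the fractional product and chain rules (Lemmas~\ref{L:product rule}, \ref{L:Fract chain rule}), and when $0<p<1$ one uses the Hölder-continuity chain rule of Lemma~\ref{L:frac deriv of diff} applied to $G(z)=|z|^pz$ (whose derivative is Hölder of order $p$), which produces a gain of a small power of $\eta$ and closes the contraction on $B$ in the (lower-regularity) metric $\| |\nabla|^{s_c}(u-v)\|_{L_t^qL_x^r} + \|u-v\|_{L_{t,x}^{p(d+2)/2}}$. Completeness of $B$ in this weaker metric, plus the uniform bounds defining $B$, gives a unique fixed point, and uniqueness in the full class follows by a standard continuity/connectedness argument on $I$.

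The main obstacle is the low-power regime $0<p<1$ (equivalently $d$ large relative to $s_c$), where $F$ is not $C^{\lceil s_c\rceil}$ and the naive fractional chain rule is unavailable; this is precisely why the smoothness hypothesis $s_c<1+p$ appears, and why one must route the difference estimate through Lemma~\ref{L:frac deriv of diff} rather than Lemma~\ref{L:Fract chain rule}. A secondary technical point is checking that the single exponent pair $(p+2, \tfrac{2d(p+2)}{2(d-2)+dp})$ really is admissible and interacts correctly with the scaling — i.e. that $\tfrac2q+\tfrac dr = \tfrac d2$ and that after applying $|\nabla|^{s_c}$ one lands in a genuine Strichartz norm — but this is a direct computation. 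Everything else is bookkeeping with Strichartz estimates on the compact interval $I$.
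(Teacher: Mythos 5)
Your setup (contraction mapping, the exponent pair $(p+2,\tfrac{2d(p+2)}{2(d-2)+dp})$, Corollary~\ref{C:s deriv} to place $s_c$ derivatives on $F$, small-norm hypothesis feeding the iteration) matches the paper's. The genuinely different step is your contraction metric. The paper's key simplification is to run the contraction in a \emph{derivative-free} metric, namely $d(u,v)=\|u-v\|_{L_t^{p+2}L_x^{2d(p+2)/(2(d-2)+dp)}}$, which scales like $L^2_x$. With that choice, the contraction estimate requires only the elementary pointwise Lipschitz bound $|F(u)-F(v)|\lesssim (|u|^p+|v|^p)|u-v|$ together with H\"older and Sobolev, producing the factor $(2\eta)^p$ — no fractional chain rules for differences enter at all. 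The uniform bounds on $|\nabla|^{s_c}u$ come from membership in the closed set $B_1\cap B_2$, where $\Phi$ preserves those bounds by Corollary~\ref{C:s deriv}; one never needs to \emph{contract} in a norm carrying $s_c$ derivatives. Your route instead proposes to estimate $|\nabla|^{s_c}[F(u)-F(v)]$ in $N^0$ for the contraction, which is considerably heavier machinery and is essentially the content of the separate stability lemma (cf.\ \eqref{LT:nonlin sc} in Lemma~\ref{LT:L:nonlinear estimate}).

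Two concrete issues with your contraction step as written. First, Lemma~\ref{L:frac deriv of diff} is stated for $0<s<\sigma p<p\leq 1$, so it only handles less than one derivative of a H\"older-continuous function of order $p\leq 1$; you cannot apply it directly to $|\nabla|^{s_c}[F(u)-F(v)]$ when $s_c>1$. One must first peel off a full derivative as in \eqref{sc deriv of non} and then apply the lemma to $|\nabla|^{s_c-1}$ of the product $\nabla u\cdot(F'(u+v)-F'(u))$ with $G=F'$ (which is the H\"older-of-order-$p$ function — not $G(z)=|z|^pz$, which is $C^1$). Second, even once corrected, this machinery is unnecessary here: the whole point of the paper's choice of metric is to avoid it in the local theory and reserve it for the stability theorem, where it is genuinely needed. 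So your proposal is workable in outline but over-engineered at the contraction step, and as stated has a small-but-real mismatch between Lemma~\ref{L:frac deriv of diff}'s hypotheses and the quantity you want to bound.
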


\begin{remark}
By the Strichartz inequality, we know that
$$
\bigl\| |\nabla|^{s_c} e^{it\Delta} u_0\bigr\|_{L_t^{p+2} L_x^{\frac{2d(p+2)}{2(d-2)+dp}} (\R\times\R^d)}
\lesssim \bigl \||\nabla|^{s_c} u_0 \bigr\|_{L_x^2}.
$$
Thus, \eqref{LT:small free evol} holds with $I=\R$ for initial data with sufficiently small norm.  Alternatively,
by the monotone convergence theorem, \eqref{LT:small free evol} holds provided $I$ is chosen sufficiently small.
Note that by scaling, the length of the interval $I$ depends on the fine properties of $u_0$, not only on its norm.
\end{remark}

\begin{proof}
We will essentially repeat the standard argument from \cite{cwI}; the fractional chain rule Lemma~\ref{L:Fract chain rule}
leads to some simplifications.

The theorem follows from a contraction mapping argument.  More precisely, using the Strichartz estimates from
Lemma~\ref{L:Strichartz}, we will show that the map $u\mapsto \Phi(u)$ defined by
$$
\Phi(u)(t):= e^{it\Delta} u_0 - i \int_{0}^{t} e^{i(t-s)\Delta} F(u(s))\, ds,
$$
is a contraction on the set $B_1\cap B_2$ where
\begin{align*}
B_1&:=\Bigl\{u\in L_t^\infty H^{s_c}_x (I\times\R^d): \, \|u\|_{L_t^\infty H^{s_c}_x(I\times\R^d)} \leq 2 \|u_0\|_{H^{s_c}_x} + C(d,p)(2\eta)^{1+p}\Bigr\}\\
B_2&:=\Bigl\{u\in L_t^{p+2}  W_x^{s_c, \frac{2d(p+2)}{2(d-2)+dp}}(I\times\R^d): \,
    \bigl\| |\nabla|^{s_c} u \bigr\|_{L_t^{p+2} L_x^{\frac{2d(p+2)}{2(d-2)+dp}} (I\times\R^d)}\! \! \leq \! 2\eta \\
&\qquad\qquad\qquad\qquad\qquad\qquad\qquad \text{and} \quad
        \bigl\| u \bigr\|_{L_t^{p+2} L_x^{\frac{2d(p+2)}{2(d-2)+dp}} (I\times\R^d)}\! \! \leq \! 2C(d,p)\|u_0\|_{L_x^2}\Bigr\}
\end{align*}
under the metric given by
$$
d(u,v):= \| u -v \|_{L_t^{p+2} L_x^{\frac{2d(p+2)}{2(d-2)+dp}}(I\times\R^d)}.
$$
Here $C(d,p)$ denotes a constant that changes from line to line.  Note that the norm appearing in the metric scales like
$L^2_x$.  Note also that both $B_1$ and $B_2$ are closed (and hence complete) in this metric.

Using the Strichartz inequality followed by Corollary~\ref{C:s deriv} and Sobolev embedding, we find that for $u\in B_1\cap B_2$,
\begin{align*}
\|\Phi(u)\|_{L_t^\infty H^{s_c}_x(I\times\R^d)}
&\leq \|u_0\|_{H^{s_c}_x} + C(d,p)\bigl\| \langle\nabla\rangle^{s_c} F(u)\bigr\|_{L_t^{\frac{p+2}{p+1}} L_x^{\frac{2d(p+2)}{2(d+2)+dp}}(I\times\R^d)}\\
&\leq \|u_0\|_{H^{s_c}_x} + C(d,p)\bigl\| \langle\nabla\rangle^{s_c} u \bigr\|_{L_t^{p+2} L_x^{\frac{2d(p+2)}{2(d-2)+dp}}(I\times\R^d)}
            \|u\|_{L_t^{p+2} L_x^{\frac{dp(p+2)}4}(I\times\R^d)}^p\\
&\leq \|u_0\|_{H^{s_c}_x} + C(d,p)\bigl(2\eta + 2C(d,p) \|u_0\|_{L_x^2} \bigr)
            \bigl\| |\nabla|^{s_c} u \bigr\|_{L_t^{p+2} L_x^{\frac{2d(p+2)}{2(d-2)+dp}} (I\times\R^d)}^p\\
&\leq \|u_0\|_{H^{s_c}_x} + C(d,p) \bigl(2\eta + 2C(d,p) \|u_0\|_{L_x^2} \bigr)(2\eta)^p
\end{align*}
and similarly,
\begin{align*}
\bigl\| \Phi(u) \bigr\|_{L_t^{p+2} L_x^{\frac{2d(p+2)}{2(d-2)+dp}}  (I\times\R^d)}
&\leq C(d,p) \|u_0\|_{L_x^2} + C(d,p)\bigl\|F(u)\bigr\|_{L_t^{\frac{p+2}{p+1}} L_x^{\frac{2d(p+2)}{2(d+2)+dp}}(I\times\R^d)}\\
&\leq C(d,p) \|u_0\|_{L_x^2} + C(d,p) \|u_0\|_{L_x^2} (2\eta)^p.
\end{align*}
Arguing as above and invoking \eqref{LT:small free evol}, we obtain
\begin{align*}
\bigl\| |\nabla|^{s_c} \Phi(u) \bigr\|_{L_t^{p+2} L_x^{\frac{2d(p+2)}{2(d-2)+dp}} (I\times\R^d)}
&\leq \eta + C(d,p)\bigl\|  |\nabla|^{s_c} F(u)\bigr\|_{L_t^{\frac{p+2}{p+1}} L_x^{\frac{2d(p+2)}{2(d+2)+dp}}(I\times\R^d)}\\
&\leq \eta + C(d,p)(2\eta)^{1+p}.
\end{align*}
Thus, choosing $\eta_0=\eta_0(d)$ sufficiently small, we see that for $0<\eta\leq \eta_0$, the functional $\Phi$ maps the set
$B_1\cap B_2$ back to itself.  To see that $\Phi$ is a contraction, we repeat the computations above to obtain
\begin{align*}
\bigl\| \Phi(u) - \Phi(v) \bigr\|_{L_t^{p+2} L_x^{\frac{2d(p+2)}{2(d-2)+dp}}(I\times\R^d)}
&\leq C(d,p) \bigl\| F(u) - F(v)\bigr\|_{L_t^{\frac{p+2}{p+1}} L_x^{\frac{2d(p+2)}{2(d+2)+dp}}(I\times\R^d)}\\
&\leq C(d,p) (2\eta)^p \|u-v\|_{L_t^{p+2} L_x^{\frac{2d(p+2)}{2(d-2)+dp}}(I\times\R^d)}.
\end{align*}
Thus, choosing $\eta_0=\eta_0(d)$ even smaller (if necessary), we can guarantee that $\Phi$ is a contraction on the set $B_1\cap B_2$.
By the contraction mapping theorem, it follows that $\Phi$ has a fixed point in $B_1\cap B_2$.  Moreover, noting that $\Phi$ maps into
$C_t^0 H^{s_c}_x$ (not just $L_t^\infty H^{s_c}_x$), we derive (after one more application of the Strichartz inequality)
that the fixed point of $\Phi$ is indeed a solution to \eqref{nls}.

We now turn our attention to the uniqueness statement.  Since uniqueness is a local property, it suffices to study a neighbourhood of $t=0$.
By Definition~\ref{D:solution} (and the Strichartz inequality), any solution to \eqref{nls} belongs to $B_1\cap B_2$ on some such neighbourhood.
Uniqueness thus follows from uniqueness in the contraction mapping theorem.

The claims \eqref{LT:bdd sc} and \eqref{LT:bdd s0} follow from another application of the Strichartz inequality, as above.
\end{proof}

Next, we will establish a stability theory for \eqref{nls} in the settings of Theorems~\ref{T:main cubic} and~\ref{T:main}.
We start with the cubic NLS.

\begin{theorem}[Stability -- the cubic]\label{T:stab cubic}
Let $d\geq 2$ and let $I$ a compact time interval containing zero and $\tilde u$ be an approximate solution to \eqref{nls} on $I\times \R^d$
in the sense that
$$
i\tilde u_t =-\Delta \tilde u+ |\tilde u|^2\tilde u+ e
$$
for some function $e$.  Assume that
\begin{align}
\|\tilde u\|_{L_t^\infty \dot H_x^{\frac{d-2}2}(I\times \R^d)}&\le E \label{LT:finite energy cubic}\\
S_I(\tilde u)&\le L \label{LT: finite s-t cubic}
\end{align}
for some positive constants $E$ and $L$.  Let $u_0\in \dot H_x^{\frac{d-2}2}$ and assume the smallness conditions
\begin{align}
\|u_0-\tilde u_0\|_{\dot H_x^{\frac{d-2}2}}&\leq \eps \label{LT:diff energy cubic}\\
\bigl\| |\nabla|^{\frac{d-2}2} e \bigr\|_{N^0(I)}&\leq \eps \label{LT:error small cubic}
\end{align}
for some $0<\eps<\eps_1=\eps_1(E,L)$. Then, there exists a unique strong solution $u:I\times\R^d\mapsto \C$ to \eqref{nls} with initial data
$u_0$ at time $t=0$ satisfying
\begin{align}
S_I(u-\tilde u) &\leq C(E,L)\eps^{d+2} \label{LT:close in s-t cubic}\\
\bigl\| |\nabla|^{\frac{d-2}2} (u-\tilde u)\bigr\|_{S^0(I)} &\leq C(E,L)\eps \label{LT:close in S cubic}\\
\bigl\| |\nabla|^{\frac{d-2}2} u \bigr\|_{S^0(I)} &\leq C(E,L) \label{LT:u in Sc cubic}.
\end{align}
\end{theorem}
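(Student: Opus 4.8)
The plan is the standard stability (long-time perturbation) argument: first establish the conclusion on a single time interval on which the scattering size of $\tilde u$ is small, and then iterate over a finite partition of $I$ whose number of pieces is controlled by $E$ and $L$. Since $F(z)=|z|^2z$ is polynomial (equivalently, $p=2$ is an even integer), Corollary~\ref{C:s deriv} is available with $s=\dtt$ for every $d$, so that no smoothness restriction on the nonlinearity is needed; we also freely use the Sobolev bound $\|f\|_{L^{d+2}_{t,x}}\lesssim\||\nabla|^{\dtt}f\|_{S^0}$ and the identity $S_{I'}(f)=\|f\|^{d+2}_{L^{d+2}_{t,x}(I')}$. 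As a preliminary, use \eqref{LT: finite s-t cubic} to split $I=\bigcup_{j=1}^JI_j$ into $J=J(E,L)$ consecutive intervals with $S_{I_j}(\tilde u)\le\delta$, where $\delta=\delta(E)>0$ is small and will be fixed below. On each $I_j$, Lemma~\ref{L:Strichartz}, Corollary~\ref{C:s deriv}, \eqref{LT:finite energy cubic}, and \eqref{LT:error small cubic} give
\[
\bigl\||\nabla|^{\dtt}\tilde u\bigr\|_{S^0(I_j)}\lesssim E+\delta^{\frac2{d+2}}\bigl\||\nabla|^{\dtt}\tilde u\bigr\|_{S^0(I_j)}+\eps ;
\]
since the left-hand side is a priori finite (subdivide further if necessary), a small choice of $\delta$ absorbs the middle term, so that $\bigl\||\nabla|^{\dtt}\tilde u\bigr\|_{S^0(I)}\lesssim_{E,L}1$.

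\emph{Short-time step.} I claim there are $\delta=\delta(E)$ and $\eps'=\eps'(E)$ such that whenever $I'$ is a compact interval with $0\in I'$, $\|\tilde u\|_{L^\infty_t\dot H^{\dtt}_x(I')}\le E$, $S_{I'}(\tilde u)\le\delta$, $\bigl\||\nabla|^{\dtt}e\bigr\|_{N^0(I')}\le\eps\le\eps'$, and $\|u_0-\tilde u_0\|_{\dot H^{\dtt}_x}\le\eps$, then any solution $u$ of \eqref{nls} on $I'$ with $u(0)=u_0$ obeys
\[
\bigl\||\nabla|^{\dtt}(u-\tilde u)\bigr\|_{S^0(I')}\lesssim_E\eps,\qquad S_{I'}(u-\tilde u)\lesssim_E\eps^{d+2},\qquad \bigl\||\nabla|^{\dtt}u\bigr\|_{S^0(I')}\lesssim_E1 .
\]
To prove this, set $w:=u-\tilde u$, so that $iw_t+\Delta w=[F(\tilde u+w)-F(\tilde u)]-e$ with $w(0)=u_0-\tilde u_0$. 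Expanding the cubic into the finitely many monomials in $w,\bar w,\tilde u,\bar{\tilde u}$, each containing a factor $w$ or $\bar w$, and distributing the $\dtt$ derivatives by the product rule (Lemma~\ref{L:product rule}) and the fractional chain rule, exactly as in the proof of Corollary~\ref{C:s deriv}, one obtains, all norms below being over $I'$,
\[
\bigl\||\nabla|^{\dtt}[F(\tilde u+w)-F(\tilde u)]\bigr\|_{N^0}\lesssim \bigl\||\nabla|^{\dtt}w\bigr\|_{S^0}\bigl(\|\tilde u\|_{L^{d+2}_{t,x}}+\|w\|_{L^{d+2}_{t,x}}\bigr)^2+\bigl\||\nabla|^{\dtt}\tilde u\bigr\|_{S^0}\bigl(\|\tilde u\|_{L^{d+2}_{t,x}}+\|w\|_{L^{d+2}_{t,x}}\bigr)\|w\|_{L^{d+2}_{t,x}} .
\]
Feeding this into Lemma~\ref{L:Strichartz}, and invoking \eqref{LT:diff energy cubic}, \eqref{LT:error small cubic}, the Sobolev bound, $\|\tilde u\|_{L^{d+2}_{t,x}(I')}=S_{I'}(\tilde u)^{1/(d+2)}\le\delta^{1/(d+2)}$, and $\bigl\||\nabla|^{\dtt}\tilde u\bigr\|_{S^0(I')}\lesssim E$, one obtains for $X:=\bigl\||\nabla|^{\dtt}w\bigr\|_{S^0(I')}$ an inequality
\[
X\le C_1\eps+C_2(E)\,\delta^{\frac1{d+2}}\,X+C_3(E)\bigl(X^2+X^3\bigr),
\]
with $C_1$ absolute. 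Fixing $\delta=\delta(E)$ so that $C_2(E)\delta^{1/(d+2)}\le\tfrac12$ and then $\eps'=\eps'(E)$ small, a standard continuity argument --- $X$ is continuous and non-decreasing as $I'$ shrinks to $\{0\}$, with limit $\|u_0-\tilde u_0\|_{\dot H^{\dtt}_x}\le\eps$ --- forces $X\lesssim_E\eps$; the other two bounds follow from the Sobolev bound and the triangle inequality. (Existence of $u$ on $I'$ is not claimed here; it is supplied below.)

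\emph{Iteration and existence.} Run the short-time step in turn on $I_1,\dots,I_J$. On $I_j$, denote its left endpoint by $a_j$ and set $\eps_j:=\|u(a_j)-\tilde u(a_j)\|_{\dot H^{\dtt}_x}$, so $\eps_1\le\eps$; the step gives $\eps_{j+1}\le\|u-\tilde u\|_{L^\infty_t\dot H^{\dtt}_x(I_j)}\lesssim_E\max(\eps_j,\eps)$, hence $\eps_j\le C(E)^{\,J}\eps=:C(E,L)\eps$ for every $j\le J$. Therefore, if $\eps<\eps_1(E,L)$ is small enough that $C(E,L)\eps\le\eps'(E)$, every one of the $J$ applications is legitimate, and summing the resulting bounds over $I_1,\dots,I_J$ yields \eqref{LT:close in s-t cubic}--\eqref{LT:u in Sc cubic}. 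Existence of $u$ on all of $I$ now follows by combining these a priori estimates with the standard local theory: for $u_0\in H^{\dtt}_x$ Theorem~\ref{T:standard lwp} gives a local-in-time solution, and the global bound on $\bigl\||\nabla|^{\dtt}u\bigr\|_{S^0(I)}$ precludes blowup before either endpoint of $I$; for general $u_0\in\dot H^{\dtt}_x$ one approximates $u_0$ in $\dot H^{\dtt}_x$ by $H^{\dtt}_x$ data and passes to the limit, the difference estimates above showing that the approximating solutions form a Cauchy sequence. Uniqueness is the uniqueness built into the contraction underlying the short-time step.

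\emph{Main obstacle.} The only genuinely delicate ingredient is the $N^0$-estimate for $|\nabla|^{\dtt}[F(\tilde u+w)-F(\tilde u)]$ displayed above: the $\dtt$ derivatives --- which exceed one for every $d\ge5$ --- must be apportioned among the three factors so that the bound is linear in $\bigl\||\nabla|^{\dtt}w\bigr\|_{S^0}$ with a coefficient carrying either the smallness factor $\|\tilde u\|_{L^{d+2}_{t,x}}$ (small on a short interval) or a power of $\eps$, with all remaining terms genuinely higher order in $w$; everything that follows is a routine continuity-plus-iteration argument. Because $F(z)=|z|^2z$ is polynomial, this apportionment reduces to expanding into finitely many monomials and iterating the product rule, so --- in contrast with the general nonlinearity treated later in this section --- no H\"older-continuous chain rule (Lemma~\ref{L:frac deriv of diff}) is needed, and polynomiality is also the reason no constraint of the type $s_c<1+p$ intervenes here.
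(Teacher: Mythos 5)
Your proposal is correct and follows the same overall architecture as the paper's proof — establish a short-time perturbation lemma via contraction/bootstrap, then iterate over a finite partition of $I$ controlled by $E$ and $L$, and finally remove the auxiliary assumption $u_0\in L^2_x$ by approximating in $\dot H^{s_c}_x$. The one genuine technical difference is in the choice of smallness for the short-time step. The paper's short-time argument assumes smallness of the \emph{differentiated} Strichartz norm $\bigl\||\nabla|^{\frac{d-2}2}\tilde u\bigr\|_{L^{d+2}_tL^{2d(d+2)/(d^2+2d-4)}_x}$ (see the paper's \eqref{LT: tilde u small}), which — being strong enough to dominate every $\tilde u$ factor after the product rule — yields a bootstrap inequality for $A(t)$ with no visible $E$-dependence, \eqref{S(t)}. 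This requires a double subdivision of $I$: first to prove $\bigl\||\nabla|^{\frac{d-2}2}\tilde u\bigr\|_{S^0(I)}\lesssim_{E,L}1$, and then a second subdivision making the differentiated $L^{d+2}_tL^r_x$-norm small. You instead make only the undifferentiated scattering size $S_{I'}(\tilde u)$ small (a single subdivision), retain $\bigl\||\nabla|^{\frac{d-2}2}\tilde u\bigr\|_{S^0}$ as a large factor $\lesssim E$, and absorb it by taking $\delta=\delta(E)$ small in the bootstrap inequality $X\le C_1\eps+C_2(E)\delta^{1/(d+2)}X+C_3(E)(X^2+X^3)$. This is a well-known equivalent variant; the paper's choice keeps the short-time bootstrap free of $E$-dependence, at the price of one extra subdivision, while yours keeps the subdivision to one pass at the price of tracking $E$ through the absorption. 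Both need the same technical ingredient for $d\ge5$ (where $s_c>1$): an iterated product-rule/fractional-chain-rule argument to distribute $s_c$ derivatives over the cubic monomials of $F(\tilde u+w)-F(\tilde u)$, exactly as in Corollary~\ref{C:s deriv}; you correctly identify this as the main delicate point and correctly note that polynomiality of $F$ makes Lemma~\ref{L:frac deriv of diff} unnecessary here.
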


\begin{proof}
We will prove the theorem under the additional assumption that $u_0\in L_x^2$, so that we can rely on Theorem~\ref{T:standard lwp}
to guarantee that $u$ exists.  This additional assumption can be removed \emph{a posteriori}
by the usual limiting argument: approximate $u_0$ in $\dot H^{s_c}_x$ by $\{u_n(0)\}_n\subseteq H^{s_c}_x$ and apply the theorem with $\tilde u = u_m$,
$u=u_n$, and $e=0$ to deduce that the sequence of solutions $\{u_n\}_n$ with initial data $\{u_n(0)\}_n$ is Cauchy in critical norms
and thus convergent to a solution $u$ with initial data $u_0$ which obeys $|\nabla|^{s_c} u \in S^0(I)$.  Thus, it suffices to prove
\eqref{LT:close in s-t cubic} through \eqref{LT:u in Sc cubic} as \emph{a priori} estimates, that is we assume that the solution $u$
exists and obeys $|\nabla|^{s_c} u \in S^0(I)$.

We first prove \eqref{LT:close in s-t cubic} through \eqref{LT:u in Sc cubic} under the stronger hypothesis that
\begin{align}\label{LT: tilde u small}
\bigl\| |\nabla|^{\frac{d-2}2} \tilde u \bigr\|_{L_t^{d+2} L_x^{\frac{2d(d+2)}{d^2+2d-4}}(I\times\R^d)}\leq \delta
\end{align}
for some small $0<\delta=\delta(d,p)$.

Let $w:=u-\tilde u$.  Then $w$ satisfies the following initial value problem
\begin{equation*}
\begin{cases}
iw_t=-\Delta w + F(\tilde u+w)-F(\tilde u) -e\\
w(0)=u_0-\tilde u_0.
\end{cases}
\end{equation*}
For $t\in I$ we define
$$
A(t):=\bigl\|  |\nabla|^{\frac{d-2}2} \bigl[F(\tilde u+w)-F(\tilde u)\bigr] \bigr\|_{N^0([0,t])}.
$$
By the (fractional) chain rule and \eqref{LT: tilde u small},
\begin{align}\label{S(t)}
A(t)&\lesssim \bigl\| |\nabla|^{\frac{d-2}2} w\bigr\|_{S^0(I)}^3 + \delta \bigl\| |\nabla|^{\frac{d-2}2} w\bigr\|_{S^0(I)}^2
    +\delta^2 \bigl\| |\nabla|^{\frac{d-2}2} w\bigr\|_{S^0(I)}.
\end{align}
On the other hand, by Strichartz, \eqref{LT:diff energy cubic}, and \eqref{LT:error small cubic}, we get
\begin{align}\label{z}
\bigl\| |\nabla|^{\frac{d-2}2} w\bigr\|_{S^0(I)}
&\lesssim \|u_0-\tilde u_0\|_{\dot H_x^{\frac{d-2}2}} + A(t) + \bigl\| |\nabla|^{\frac{d-2}2} e \bigr\|_{N^0(I)}
\lesssim A(t)+\eps.
\end{align}
Combining \eqref{S(t)} and \eqref{z}, we obtain
$$
A(t)\lesssim (A(t)+\eps)^3+\delta(A(t)+\eps)^2+ \delta^2(A(t)+\eps)+\eps.
$$
A standard continuity argument then shows that if $\delta$ is taken sufficiently small,
$$
A(t)\lesssim \eps \ \ \text{for any}\ \ t\in I,
$$
which immediately implies \eqref{LT:close in s-t cubic} through \eqref{LT:u in Sc cubic} via an application of the Strichartz inequality
and the triangle inequality.

We now prove \eqref{LT:close in s-t cubic} through \eqref{LT:u in Sc cubic} under the hypothesis \eqref{LT: finite s-t cubic}, as opposed
to \eqref{LT: tilde u small}.  We first show that
\begin{equation}\label{LT:tilde u cubic}
\bigl\| |\nabla|^{\frac{d-2}2} \tilde u\bigr\|_{S^0(I)}\le C(E,L).
\end{equation}
Indeed, by \eqref{LT: finite s-t cubic} we may divide $I$ into $J_0=J_0(L, \eta)$ subintervals $I_j=[t_j,t_{j+1}]$
such that on each spacetime slab $I_j\times\R^d$
$$
\|\tilde u\|_{L_{t,x}^{d+2}(I_j\times\R^d)}\le \eta
$$
for a small constant $\eta>0$ to be chosen in a moment.  By the Strichartz inequality combined with Corollary~\ref{C:s deriv},
\eqref{LT:finite energy cubic}, and \eqref{LT:error small cubic},
\begin{align*}
\bigl\| |\nabla|^{\frac{d-2}2} \tilde u\bigr\|_{S^0(I_j)}
&\lesssim \|\tilde u(t_j)\|_{\dot H_x^{\frac{d-2}2}} + \bigl\| |\nabla|^{\frac{d-2}2} e \bigr\|_{ N^0(I_j)}
    + \bigl\| |\nabla|^{\frac{d-2}2} F(\tilde u)\bigr\|_{N^0(I_j)}\\
&\lesssim E + \eps + \|\tilde u\|_{L_{t,x}^{d+2}(I_j\times\R^d)}^2 \bigl\| |\nabla|^{\frac{d-2}2} \tilde u\bigr\|_{S^0(I_j)}\\
&\lesssim E + \eps + \eta^2 \bigl\| |\nabla|^{\frac{d-2}2} \tilde u\bigr\|_{S^0(I_j)}.
\end{align*}
Thus, choosing $\eta>0$ small depending on $d$ and $\eps_1$ sufficiently small depending on $E$, we obtain
$$
\bigl\| |\nabla|^{\frac{d-2}2} \tilde u\bigr\|_{S^0(I_j)}\lesssim E.
$$
Summing this over all subintervals $I_j$, we derive \eqref{LT:tilde u cubic}.  Thus, we may divide $I$ into $J_1=J_1(E,L)$ subintervals
$I_j=[t_j,t_{j+1}]$ such that on each spacetime slab $I_j\times\R^d$
$$
\bigl\| |\nabla|^{\frac{d-2}2} \tilde u \bigr\|_{L_t^{d+2} L_x^{\frac{2d(d+2)}{d^2+2d-4}}(I_j\times\R^d)}\le \delta
$$
for some small $\delta=\delta(d,p)>0$ as appearing in \eqref{LT: tilde u small}.

Choosing $\eps_1$ sufficiently small (depending on $J_1$), we can iterate the argument above to obtain for each $0\leq j< J_1$
and all $0<\eps<\eps_1$,
\begin{equation}\label{bounds on j cubic}
\begin{aligned}
S_{I_j}(u-\tilde u) &\leq C(j)\eps^{d+2}\\
\bigl\| |\nabla|^{\frac{d-2}2}( u-\tilde u) \bigr\|_{S^0(I_j)}&\leq C(j) \eps\\
\bigl\| |\nabla|^{\frac{d-2}2} u \bigr\|_{S^0(I_j)}&\leq C(j) E\\
\bigl\| |\nabla|^{\frac{d-2}2}\bigl[F(u)-F(\tilde u)\bigr] \bigr\|_{N^0(I_j)}&\leq C(j)\eps,
\end{aligned}
\end{equation}
provided we can show
\begin{align}\label{LT:left cubic}
\|u(t_j)-\tilde u(t_j)\|_{\dot H^{\frac{d-2}2}_x}&\leq C(j-1) \eps
\end{align}
for each $1\leq j<J_1$.  By the Strichartz inequality and the inductive hypothesis,
\begin{align*}
\|u(t_{j})- \tilde u(t_{j})\|_{\dot{H}^{\frac{d-2}2}_x}
&\lesssim \|u_0-\tilde u_0\|_{\dot{H}^{\frac{d-2}2}_x} + \bigl\| |\nabla|^{\frac{d-2}2} e \bigr\|_{N^0([0, t_{j}])}\\
&\quad + \bigl\| |\nabla|^{\frac{d-2}2} \bigl[ F(u)-F(\tilde u) \bigr]\bigr\|_{N^0([0, t_j])}\\
&\lesssim \eps +\sum_{k=0}^{j-1}C(k) \eps,
\end{align*}
which proves \eqref{LT:left cubic}.

Summing the bounds in \eqref{bounds on j cubic} over all subintervals $I_j$, we derive \eqref{LT:close in s-t cubic}
through \eqref{LT:u in Sc cubic}.  This completes the proof of the theorem.
\end{proof}

We now address the stability question in the setting of Theorem~\ref{T:main}.  We will prove the following result.

\begin{theorem}[Stability]\label{T:stab}
Let $d\geq 5$ and assume the critical regularity $s_c$ satisfies \eqref{sc}.  Let $I$ a compact time interval containing zero and let $\tilde u$
be an approximate solution to \eqref{nls} on $I\times \R^d$ in the sense that
$$
i\tilde u_t =-\Delta \tilde u + F(\tilde u)+ e
$$
for some function $e$.  Assume that
\begin{align}
\|\tilde u\|_{L_t^\infty \dot H_x^{s_c}(I\times \R^d)}&\le E \label{LT:finite energy}\\
S_I(\tilde u)&\le L \label{LT: finite s-t}
\end{align}
for some positive constants $E$ and $L$.  Let $u_0\in \dot H_x^{s_c}$ and assume the smallness conditions
\begin{align}
\|u_0-\tilde u_0\|_{\dot H_x^{s_c}}&\le \eps \label{LT:diff energy}\\
\bigl\| |\nabla|^{s_c} e \bigr\|_{N^0(I)}&\le \eps \label{LT:error small}
\end{align}
for some $0<\eps<\eps_1=\eps_1(E,L)$. Then, there exists a unique strong solution $u:I\times\R^d\mapsto \C$ to \eqref{nls} with initial data
$u_0$ at time $t=0$ satisfying
\begin{align}
S_I(u-\tilde u) &\leq C(E,L)\eps^{c_1} \label{LT:close in s-t}\\
\bigl\| |\nabla|^{s_c} (u-\tilde u)\bigr\|_{S^0(I)} &\leq C(E,L)\eps^{c_2} \label{LT:close in S}\\
\bigl\| |\nabla|^{s_c} u \bigr\|_{S^0(I)} &\leq C(E,L) \label{LT:u in Sc},
\end{align}
where $c_1, c_2$ are positive constants that depend on $d,p,E,$ and $L$.
\end{theorem}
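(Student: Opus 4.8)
The plan is to adapt the proof of Theorem~\ref{T:stab cubic}, replacing the cubic-specific estimates by the general nonlinear estimates available for $F(u)=|u|^pu$ when $s_c<1+p$, namely Corollary~\ref{C:s deriv} together with the H\"older-type fractional chain rules in Lemma~\ref{L:FDFP} and Lemma~\ref{L:frac deriv of diff}. As before, we first reduce to the case $u_0\in L_x^2$ by an approximation argument so that Theorem~\ref{T:standard lwp} produces the solution $u$, and it then suffices to establish \eqref{LT:close in s-t} through \eqref{LT:u in Sc} as a priori estimates. Writing $w:=u-\tilde u$, the function $w$ solves
\begin{equation*}
\begin{cases}
\ iw_t=-\Delta w + \bigl[F(\tilde u+w)-F(\tilde u)\bigr] - e\\
\ w(0)=u_0-\tilde u_0.
\end{cases}
\end{equation*}
The first stage is the small-data version: assuming the analogue of \eqref{LT: tilde u small}, i.e. $\bigl\||\nabla|^{s_c}\tilde u\bigr\|_{L_t^{p+2}L_x^{2d(p+2)/(2(d-2)+dp)}(I)}\leq\delta$, one estimates $A(t):=\bigl\||\nabla|^{s_c}[F(\tilde u+w)-F(\tilde u)]\bigr\|_{N^0([0,t])}$. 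Here is where the cubic argument must be modified: the clean trinomial bound \eqref{S(t)} is replaced by a bound of the schematic form
$$
A(t)\lesssim \bigl\||\nabla|^{s_c}w\bigr\|_{S^0}^{\min(p,1)}\Bigl(\bigl\||\nabla|^{s_c}w\bigr\|_{S^0}+\delta\Bigr)^{p-\min(p,1)+1} + (\text{lower-order terms in }\delta),
$$
obtained by applying Lemma~\ref{L:frac deriv of diff} (with $G=F$, and with $w,\tilde u$ in the roles of $v,u$) when $p\le1$, and the product rule Lemma~\ref{L:product rule} together with Lemma~\ref{L:Fract chain rule} when $p>1$; in either case the point is that $F(\tilde u+w)-F(\tilde u)$ carries at least one factor of $w$ and $p$ factors drawn from $w$ and $\tilde u$. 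Combined with the Strichartz bound $\bigl\||\nabla|^{s_c}w\bigr\|_{S^0(I)}\lesssim\eps+A(t)$ coming from \eqref{LT:diff energy}, \eqref{LT:error small}, a continuity argument with $\delta$ small yields $\bigl\||\nabla|^{s_c}w\bigr\|_{S^0(I)}\lesssim\eps^{c_2}$ and $S_I(w)\lesssim\eps^{c_1}$ for appropriate positive powers $c_1,c_2$ (the powers are no longer $1$ and $d+2$ because the nonlinearity is not smooth, hence the fractional chain rule loses a little; this is exactly why the theorem is stated with unspecified $c_1,c_2$).

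The second stage removes the smallness assumption \eqref{LT: tilde u small} in favor of \eqref{LT: finite s-t}, exactly as in the cubic case. Using Corollary~\ref{C:s deriv}, \eqref{LT:finite energy}, and \eqref{LT:error small}, one first shows $\bigl\||\nabla|^{s_c}\tilde u\bigr\|_{S^0(I)}\le C(E,L)$ by dividing $I$ into $J_0=J_0(L,\eta)$ intervals on which $\|\tilde u\|_{L_{t,x}^{p(d+2)/2}}\le\eta$, running Strichartz on each, absorbing the nonlinear term for $\eta$ small, and summing. One then further subdivides $I$ into $J_1=J_1(E,L)$ intervals $I_j=[t_j,t_{j+1}]$ on each of which $\bigl\||\nabla|^{s_c}\tilde u\bigr\|_{L_t^{p+2}L_x^{2d(p+2)/(2(d-2)+dp)}(I_j)}\le\delta$, and iterates the small-data estimate, controlling the transferred error at each left endpoint by
$$
\|u(t_j)-\tilde u(t_j)\|_{\dot H_x^{s_c}}\lesssim\eps+\sum_{k<j}\bigl\||\nabla|^{s_c}[F(u)-F(\tilde u)]\bigr\|_{N^0(I_k)},
$$
so that with $\eps_1$ chosen small depending on $J_1$ the inductive bounds propagate. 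Summing over the finitely many $I_j$ gives \eqref{LT:close in s-t}–\eqref{LT:u in Sc}, with $c_1,c_2$ depending on $d,p,E,L$ through the number of subintervals and the power loss in the chain rule; uniqueness follows as in Theorem~\ref{T:standard lwp} from uniqueness in the contraction.

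I expect the main obstacle to be the nonlinear difference estimate in the first stage: unlike the cubic nonlinearity, $F(u)=|u|^pu$ is only $\lfloor1+p\rfloor$-times differentiable, so $F(\tilde u+w)-F(\tilde u)$ cannot be expanded as a finite polynomial and one must instead invoke the H\"older fractional chain rule (Lemma~\ref{L:frac deriv of diff}, whose pointwise bound \eqref{E:pointwise} is tailored for precisely this purpose) to bound $\bigl\||\nabla|^{s_c}[F(\tilde u+w)-F(\tilde u)]\bigr\|_{N^0}$; getting the H\"older exponents, the auxiliary parameter $\sigma$, and the Lebesgue exponents to line up with admissible Strichartz pairs (using $s_c<1+p$ to have enough room for the derivatives to land) is the delicate bookkeeping, and it is what forces the exponents $c_1,c_2$ to be some concrete but unenlightening functions of $d$ and $p$ rather than the clean values from the cubic case. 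Everything after that—the subdivision, the Gronwall-free continuity argument, and the summation over intervals—is routine once the cubic proof is in hand.
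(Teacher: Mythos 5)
Your skeleton (short-time perturbation step, then iteration over subintervals on which $\tilde u$ is small, controlling the propagated error at left endpoints) matches the paper's outer structure, and you correctly anticipate that the exponents $c_1,c_2$ must be $<1$ because of the H\"older loss in the fractional chain rule. However, there is a genuine gap in the short-time step. Consider the contribution to $\bigl\||\nabla|^{s_c}[F(\tilde u+w)-F(\tilde u)]\bigr\|_{N^0}$ coming from $|\nabla|^{s_c-1}\bigl[\nabla\tilde u\cdot\bigl(F'(\tilde u+w)-F'(\tilde u)\bigr)\bigr]$. When the fractional derivative falls on $\nabla\tilde u$, the pointwise bound \eqref{E:pointwise} produces $\mathcal D_{s_c-1}(\nabla\tilde u)\,|w|^{p}$, whose norm is $\bigl\||\nabla|^{s_c}\tilde u\bigr\|_{S^0}\cdot\|w\|^{p}$ in some Lebesgue space; the coefficient here is of size $E$, not of size $\delta$, and for $d\geq 7$ the constraint \eqref{sc} forces $p<1$. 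If you bound $\|w\|$ by $\bigl\||\nabla|^{s_c}w\bigr\|_{S^0}$ (the only quantity your bootstrap tracks), this contributes $E\,\bigl\||\nabla|^{s_c}w\bigr\|_{S^0}^{p}\lesssim E(A+\eps)^p$ to $A(t)$ with no compensating small factor. A continuity argument cannot close $A\lesssim E(A+\eps)^p+\cdots$ when $p<1$, because the relation $A=CE(A+\eps)^p$ also admits a solution of size $\sim(CE)^{1/(1-p)}$ that is bounded away from zero as $\eps\to0$. So your claimed schematic bound, with only $\delta$ or powers $\geq 1$ of $\bigl\||\nabla|^{s_c}w\bigr\|_{S^0}$ attached to the large coefficients, is not what a careful application of Lemma~\ref{L:frac deriv of diff} gives.

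The paper repairs exactly this by running a \emph{coupled} bootstrap in two scales. It introduces the exotic critical spaces $X(I),Y(I)$ of \eqref{spaces}, built on $p/2$ fractional derivatives, together with the exotic Strichartz estimate Lemma~\ref{LT:L:exotic strichartz}, the interpolations Lemma~\ref{LT:L:interpolations}, and the nonlinear estimates Lemma~\ref{LT:L:nonlinear estimate}. The crucial point is that the difference estimate \eqref{LT:nonlin est} in the low-regularity scale gains a power of $\|w\|_{X(I)}$ strictly greater than one, so the $X(I)$-bootstrap in Lemma~\ref{L:short time stab} closes \emph{linearly}: $\|w\|_{X(I)}\lesssim\eps$ (see \eqref{LT:w in X}). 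Only after this is established is the $S^0$-scale estimate \eqref{LT:w in S} run, with the offending term replaced by $E\|w\|_{X(I)}^{p-\beta}\lesssim E\eps^{p-\beta}$, small albeit with a lost power (whence $c_1,c_2$). The subdivision criterion must accordingly be smallness of $\|\tilde u\|_{X(I)}$ rather than of $\bigl\||\nabla|^{s_c}\tilde u\bigr\|_{L_t^{p+2}L_x^{2d(p+2)/(2(d-2)+dp)}}$. In short, the part you flagged as ``delicate bookkeeping'' is in fact a structural obstruction, and the exotic-space machinery is not optional; without the parallel low-regularity norm your single-scale bootstrap does not close.
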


\begin{remarks}
1. Theorems~\ref{T:stab cubic} and \ref{T:stab} imply the existence and uniqueness of maximal-lifespan solutions in Theorem~\ref{T:standard lwp}.
They also prove that the solutions depend uniformly continuously on the initial data (on bounded sets) in norms which are critical
with respect to scaling.  As a consequence, one can remove from Theorem~\ref{T:standard lwp} the assumption that the initial data belongs to $L_x^2$,
since every $\dot H^{s_c}_x$ function is well approximated by $H^{s_c}_x$ functions.

2. Using Theorem~\ref{T:standard lwp} (without the additional assumption that $u_0\in L_x^2$, due to the first point above), as well
as its proof, one easily derives Theorem~\ref{T:local}.  We omit the standard details.
\end{remarks}

We now turn to the proof of Theorem~\ref{T:stab}; the argument we present is inspired by the one used in the energy-critical setting
\cite{Notes, TV}; see also \cite{Nakanishi} for a similar technique in the context of the Klein--Gordon equation.
The idea is to work in spaces which are critical with respect to scaling but have a small fractional number of derivatives.

For the remainder of this subsection, for any time interval $I$ we will use the abbreviations
\begin{equation}\label{spaces}
\begin{aligned}
\|u\|_{X^0(I)}&:=\| u\|_{L_t^{q_0}L_x^{\frac{r_0d}{d-r_0s_c}}(I\times\R^d)}\\
\|u\|_{X(I)}&:=\bigl\| |\nabla |^{p/2} u\bigr\|_{L_t^{q_0}L_x^{\frac{2r_0d}{2d-r_0(2s_c-p)}}(I\times\R^d)}\\
\|F\|_{Y(I)}&:=\bigl\| |\nabla|^{p/2} F \bigr\|_{L_t^{\frac{q_0}{1+p}} L_x^{\frac{2r_0d}{2(1+p)(d-r_0s_c)+r_0p}}(I\times\R^d)},
\end{aligned}
\end{equation}
where $(q_0,r_0)=\bigl(\tfrac{2p(2+p)}{p^2-p(d-2)+4}, \tfrac{d(2+p)}{d-p+s_c(2+p)} \bigr)$ is a Schr\"odinger admissible pair.  Note that
because of \eqref{sc} we have $2<r_0<\tfrac{d}{s_c}$ and $\tfrac{p(d+2)}2<q_0<\infty$.

First, we connect the spaces in which the solution to \eqref{nls} is measured to the spaces
in which the nonlinearity is measured.  As usual, this is done via a Strichartz inequality; we reproduce the standard proof.

\begin{lemma}[Strichartz estimate]\label{LT:L:exotic strichartz}
Let $I$ be a compact time interval containing $t_0$.  Then
\begin{equation*}
\Bigl\|\int_{t_0}^t e^{i(t-s)\Delta}F(s)\,ds\Bigr\|_{X(I)}\lesssim \|F\|_{Y(I)}.
\end{equation*}
\end{lemma}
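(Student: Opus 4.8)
The plan is to recognize this as a fractional-derivative Strichartz inequality and reduce it to the standard Lemma~\ref{L:Strichartz} together with the Littlewood--Paley characterization of the $X(I)$ norm. Concretely, set $v(t) := \int_{t_0}^t e^{i(t-s)\Delta}F(s)\,ds$, which solves $iv_t+\Delta v = -F$ with $v(t_0)=0$. Applying $|\nabla|^{p/2}$ and using that Fourier multipliers commute with the propagator, the homogeneous Strichartz estimate of Lemma~\ref{L:Strichartz} with $s=0$ gives
\begin{equation*}
\bigl\| |\nabla|^{p/2} v \bigr\|_{S^0(I)} \lesssim \bigl\| |\nabla|^{p/2} F \bigr\|_{N^0(I)}.
\end{equation*}
It therefore remains only to check two facts about exponents: first, that the pair $(q_0, r)$ with $r=\tfrac{2r_0 d}{2d - r_0(2s_c-p)}$ is Schr\"odinger-admissible, so that $\|\,\cdot\,\|_{L_t^{q_0}L_x^{r}} \lesssim \|\,\cdot\,\|_{S^0(I)}$ for the left-hand side; and second, that the pair dual to $(\tfrac{q_0}{1+p}, \tfrac{2r_0 d}{2(1+p)(d-r_0 s_c)+r_0 p})$ is admissible, so that the corresponding norm dominates the $N^0(I)$ norm on the right-hand side.

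The first step I would carry out is the admissibility check for the left side. One must verify $\tfrac{2}{q_0} + \tfrac{d}{r} = \tfrac d2$ and $2\le q_0, r\le\infty$. With $(q_0,r_0)$ as given — which the paper already asserts is Schr\"odinger-admissible, i.e. $\tfrac{2}{q_0}+\tfrac{d}{r_0}=\tfrac d2$ — the identity $\tfrac{d}{r} = \tfrac{d}{r_0} - \tfrac{2s_c - p}{2}$ should reduce the target identity to $p/2 = s_c - p/2 + (\text{shift})$, and a short computation using $s_c = \tfrac d2 - \tfrac 2p$ confirms it; the constraints $2<r_0<\tfrac d{s_c}$ and $\tfrac{p(d+2)}{2}<q_0<\infty$ recorded just before the lemma guarantee $r$ lies in the admissible range $[2,\tfrac{2d}{d-2}]$ (this is where condition \eqref{sc} and $d\ge 5$ enter, keeping $p/2<1$ small enough). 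The second step is the dual-admissibility check for the right side: if $(\tilde q, \tilde r)$ is admissible, then $L_t^{\tilde q'}L_x^{\tilde r'} \hookrightarrow N^0(I)$, so I need the exponents $\bigl(\tfrac{q_0}{1+p}, \tfrac{2r_0 d}{2(1+p)(d-r_0 s_c)+r_0 p}\bigr)$ to be the H\"older conjugates of an admissible pair. Equivalently, letting $(\tilde q, \tilde r)$ be the conjugate pair, one checks $\tfrac{2}{\tilde q}+\tfrac{d}{\tilde r}=\tfrac d2$; again this is a bookkeeping computation that should fall out of the admissibility of $(q_0,r_0)$ and the definition of $s_c$, with the numerology arranged precisely so that the derivative count $p/2$ matches on both sides.

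The main obstacle — such as it is — is purely the exponent arithmetic: confirming that the two pairs appearing in \eqref{spaces} are genuinely (dual-)admissible and that the subcritical constraint \eqref{sc} keeps everything in the allowed ranges $2\le q,r\le\infty$ (including the delicate endpoint $r=\tfrac{2d}{d-2}$, which is licit by \cite{tao:keel} since $d\ge 5$). There is no analytic difficulty beyond invoking Lemma~\ref{L:Strichartz}; the content of the lemma is the assertion that the somewhat baroque-looking norms in \eqref{spaces} have been chosen so that they fit the Strichartz framework exactly, with $p/2$-many derivatives. Once the two admissibility identities are verified, the estimate follows by combining them with the inhomogeneous Strichartz bound for $v$, completing the proof.
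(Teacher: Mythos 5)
Your strategy is to reduce the estimate to the standard Strichartz inequality (Lemma~\ref{L:Strichartz}) by verifying that the pair $(q_0, r)$ with $r = \tfrac{2r_0 d}{2d - r_0(2s_c - p)}$ is Schr\"odinger-admissible and that the right-hand pair is dual-admissible. This fails: neither admissibility check goes through, and the failure is not incidental but structural. Carrying out the computation you sketch, the admissibility of $(q_0, r_0)$ gives $\tfrac{2}{q_0} + \tfrac{d}{r_0} = \tfrac{d}{2}$, and since $\tfrac{d}{r} = \tfrac{d}{r_0} - (s_c - \tfrac{p}{2})$, one finds
\begin{equation*}
\frac{2}{q_0} + \frac{d}{r} \;=\; \frac{d}{2} - s_c + \frac{p}{2} \;=\; \frac{2}{p} + \frac{p}{2},
\end{equation*}
using $s_c = \tfrac{d}{2} - \tfrac{2}{p}$. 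For this to equal $\tfrac{d}{2}$ one would need $p^2 - dp + 4 = 0$, which does not hold in the parameter range of the paper; instead $\tfrac{2}{p} + \tfrac{p}{2} < \tfrac{d}{2}$, so $(q_0, r)$ is a strictly sub-admissible (``acceptable'') pair. Thus the norms $X(I)$ and $Y(I)$ cannot be controlled by, or identified with, $S^0(I)$ and $N^0(I)$ with $p/2$ derivatives, and the inhomogeneous Strichartz estimate of Lemma~\ref{L:Strichartz} simply does not apply.

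The actual proof is of a different nature (and the lemma's label, ``exotic Strichartz,'' signals this): one applies the dispersive estimate \eqref{dispersive-p} pointwise in time, noting that the two spatial exponents in $X$ and $Y$ are H\"older duals, to get a kernel $|t-s|^{-\alpha}$ with $\alpha = \tfrac{p(d - r_0 s_c)}{2r_0} \in (0,1)$, and then applies the one-dimensional Hardy--Littlewood--Sobolev inequality in the time variable. The time exponents $q_0$ and $\tfrac{q_0}{1+p}$ are arranged precisely so that HLS closes, i.e. $\alpha = 1 - \tfrac{p}{q_0}$, which a short computation from the definition of $q_0$ and $r_0$ confirms. The truncation to $\int_{t_0}^{t}$ is harmless because one can dominate by the full kernel integral (no Christ--Kiselev argument is needed when one goes through HLS rather than $TT^*$). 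The ``baroque'' exponents in \eqref{spaces} were engineered to make this HLS scheme work with $p/2$ derivatives on each side, not to fit the admissible-pair Strichartz framework; your reduction misses the point of the construction and would not compile into a proof even after checking the arithmetic.
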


\begin{proof}
As $\frac{2r_0d}{2d-r_0(2s_c-p)}$ and $\frac{2r_0d}{2(1+p)(d-r_0s_c)+r_0p}$ are dual exponents, the dispersive estimate \eqref{dispersive-p} implies
$$
\bigl\|e^{i(t-s)\Delta}F(s)\bigr\|_{L_x^{\frac{2r_0d}{2d-r_0(2s_c-p)}}}
\lesssim |t-s|^{-\frac{p(d-r_0s_c)}{2r_0}} \|F(s)\|_{L_x^{\frac{2r_0d}{2(1+p)(d-r_0s_c)+r_0p}}}.
$$
Using the Hardy-Littlewood-Sobolev inequality and the fact that $(q_0,r_0)$ is a Schr\"odinger admissible pair, we obtain
$$
\Bigl\|\int_{t_0}^t e^{i(t-s)\Delta}F(s)ds\Bigr\|_{L_t^{q_0}L_x^{\frac{2r_0d}{2d-r_0(2s_c-p)}}(I\times\R^d)}
\lesssim \|F\|_{L_t^{\frac{q_0}{1+p}} L_x^{\frac{2r_0d}{2(1+p)(d-r_0s_c)+r_0p}}(I\times\R^d)}.
$$
As the differentiation operator $|\nabla|^{p/2}$ commutes with the free evolution, we recover the claim.
\end{proof}

Next we establish some connections between the spaces defined in \eqref{spaces} and the usual Strichartz spaces.

\begin{lemma}[Interpolations]\label{LT:L:interpolations}
For any compact time interval $I$,
\begin{align}
\|u\|_{X^0(I)}
&\lesssim \|u\|_{X(I)}\lesssim \bigl\||\nabla|^{s_c} u\bigr\|_{S^0(I)} \label{LT:X emb}\\
\|u\|_{X(I)}
&\lesssim \|u\|_{L_{t,x}^{\frac{p(d+2)}2}(I\times\R^d)}^{\theta_1} \bigl\||\nabla|^{s_c} u\bigr\|_{S^0(I)}^{1-\theta_1} \label{LT:inter1}\\
\|u\|_{L_{t,x}^{\frac {p(d+2)}2}(I\times\R^d)}
&\lesssim\|u\|_{X(I)}^{\theta_2} \bigl\||\nabla|^{s_c} u\bigr\|_{S^0(I)}^{1-\theta_2}, \label{LT:inter2}
\end{align}
where $0<\theta_1,\theta_2< 1$ depend on $d,p$.
\end{lemma}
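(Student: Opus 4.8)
The plan is to deduce all three estimates from Sobolev embedding (equivalently, Hardy--Littlewood--Sobolev for the Riesz potentials $|\nabla|^{-\sigma}$), interpolation of homogeneous Sobolev spaces, and H\"older's inequality in the time variable, using only that $(q_0,r_0)$ is Schr\"odinger admissible together with the numerology forced by \eqref{sc}, namely $2<r_0<\tfrac d{s_c}$, $\tfrac{p(d+2)}2<q_0<\infty$, and $0<\tfrac p2<s_c$. The organizing remark is that every norm appearing in the lemma is invariant under the scaling \eqref{scaling}: for a norm of the form $\bigl\||\nabla|^\sigma u\bigr\|_{L_t^qL_x^r}$, scale invariance amounts to the single affine relation $\sigma-s_c=\tfrac dr+\tfrac2q-\tfrac d2$, so the triples $\bigl(\sigma,\tfrac1q,\tfrac1r\bigr)$ associated with $\|\cdot\|_{X^0(I)}$, $\|\cdot\|_{X(I)}$, $\|\cdot\|_{L_{t,x}^{p(d+2)/2}}$ and $\bigl\||\nabla|^{s_c}\cdot\bigr\|_{L_t^qL_x^r}$ (over admissible pairs $(q,r)$) all lie on one fixed two-dimensional affine subspace $\mathcal L\subset\R^3$. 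Any affine combination of points of $\mathcal L$ again lies on $\mathcal L$, and a H\"older/interpolation inequality among the corresponding norms holds precisely when one triple is a convex combination of two others.

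We begin with \eqref{LT:X emb}. For the first inequality, set $g:=|\nabla|^{p/2}u$ so that $u=|\nabla|^{-p/2}g$; the reciprocals of the spatial exponents of $X(I)$ and $X^0(I)$ differ by exactly $\tfrac{p/2}d$, and $2<r_0<\tfrac d{s_c}$ places both exponents in $(1,\infty)$ with the one for $X(I)$ the smaller, so Sobolev embedding in the space variable (at each fixed time) followed by taking $L_t^{q_0}$ gives $\|u\|_{X^0(I)}\lesssim\|u\|_{X(I)}$. For the second inequality, write $|\nabla|^{p/2}u=|\nabla|^{-(s_c-p/2)}\bigl(|\nabla|^{s_c}u\bigr)$ --- licit because $s_c>\tfrac p2$ --- and apply Hardy--Littlewood--Sobolev in $x$: the reciprocals of $r_0$ and of the spatial exponent of $X(I)$ differ by $\tfrac{s_c-p/2}d$ and both lie in $(1,\infty)$, whence $\|u\|_{X(I)}\lesssim\bigl\||\nabla|^{s_c}u\bigr\|_{L_t^{q_0}L_x^{r_0}}\le\bigl\||\nabla|^{s_c}u\bigr\|_{S^0(I)}$, the last step because $(q_0,r_0)$ is admissible.

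Next, \eqref{LT:inter1}. Since $0<\tfrac p2<s_c$, interpolating the homogeneous Sobolev spaces $\dot W_x^{0,p(d+2)/2}=L_x^{p(d+2)/2}$ and $\dot W_x^{s_c,b}$ (for a spatial exponent $b$ to be chosen) with parameter $\mu:=\tfrac p{2s_c}\in(0,1)$ gives, pointwise in $t$, $\bigl\||\nabla|^{p/2}u(t)\bigr\|_{L_x^{r_X}}\lesssim\|u(t)\|_{L_x^{p(d+2)/2}}^{1-\mu}\,\bigl\||\nabla|^{s_c}u(t)\bigr\|_{L_x^b}^\mu$, where $r_X$ is the spatial exponent of $X(I)$ and $b$ is fixed by $\tfrac1{r_X}=(1-\mu)\tfrac2{p(d+2)}+\mu\tfrac1b$. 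H\"older in time then yields $\|u\|_{X(I)}\lesssim\|u\|_{L_{t,x}^{p(d+2)/2}}^{1-\mu}\,\bigl\||\nabla|^{s_c}u\bigr\|_{L_t^{q_2}L_x^b}^\mu$ with $\tfrac1{q_0}=(1-\mu)\tfrac2{p(d+2)}+\mu\tfrac1{q_2}$. By construction the triple of $X(I)$ is the $(1-\mu,\mu)$-combination of that of $L_{t,x}^{p(d+2)/2}$ (which lies on $\mathcal L$) and of $\bigl(s_c,\tfrac1{q_2},\tfrac1b\bigr)$; hence the latter lies on $\mathcal L$ too, i.e. $\tfrac2{q_2}+\tfrac db=\tfrac d2$. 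One then checks that $\tfrac{p(d+2)}2<q_0<\infty$ and $2<r_0<\tfrac d{s_c}$ force $2<q_2\le\infty$ and $2\le b<\tfrac{2d}{d-2}$, so $(q_2,b)$ is Schr\"odinger admissible and the last factor is $\le\bigl\||\nabla|^{s_c}u\bigr\|_{S^0(I)}$. This proves \eqref{LT:inter1} with $\theta_1=1-\mu$.

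Finally, \eqref{LT:inter2}. By the first inequality in \eqref{LT:X emb} it suffices to bound $\|u\|_{L_{t,x}^{p(d+2)/2}}$ by a geometric mean of $\|u\|_{X^0(I)}$ and a norm dominated by $\bigl\||\nabla|^{s_c}u\bigr\|_{S^0(I)}$ with strictly smaller time-integrability. Using the double-endpoint admissible pair $\bigl(\tfrac{2(d+2)}d,\tfrac{2(d+2)}d\bigr)$ and Sobolev embedding in $x$ gives $\|u\|_{L_t^{2(d+2)/d}L_x^\rho}\lesssim\bigl\||\nabla|^{s_c}u\bigr\|_{L_{t,x}^{2(d+2)/d}}\le\bigl\||\nabla|^{s_c}u\bigr\|_{S^0(I)}$, where $\tfrac1\rho=\tfrac d{2(d+2)}-\tfrac{s_c}d>0$ since \eqref{sc} forces $s_c<\tfrac{d^2}{2(d+2)}$. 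The triples of $X^0(I)$, $L_{t,x}^{p(d+2)/2}$ and $L_t^{2(d+2)/d}L_x^\rho$ all have derivative order $\sigma=0$, hence lie on the line $\mathcal L\cap\{\sigma=0\}$, and because $\tfrac{2(d+2)}d<\tfrac{p(d+2)}2<q_0$ the time exponent of the scattering norm lies strictly between those of the other two; so H\"older's inequality in $t$ and $x$ yields $\|u\|_{L_{t,x}^{p(d+2)/2}}\le\|u\|_{X^0(I)}^{1-\theta}\,\|u\|_{L_t^{2(d+2)/d}L_x^\rho}^\theta$ for a unique $\theta\in(0,1)$, and combining with \eqref{LT:X emb} proves \eqref{LT:inter2} with $\theta_2=1-\theta$. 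The only genuine work throughout is verifying the scalar exponent inequalities invoked above --- that $0<\tfrac p2<s_c<\tfrac{d^2}{2(d+2)}$, that $(q_2,b)$ is admissible, and that $\rho$ is finite --- which is exactly the point at which \eqref{sc} and the precise placement of $(q_0,r_0)$ are used; granting them, each of the three estimates is a one-line application of Sobolev / Hardy--Littlewood--Sobolev and H\"older.
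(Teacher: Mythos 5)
Your estimates \eqref{LT:X emb} and \eqref{LT:inter2} are correct and match the paper's route: Sobolev/HLS at fixed time for the former, and interpolation on the derivative-zero line between $X^0$ and the Sobolev image of the endpoint $L_{t,x}^{2(d+2)/d}$ admissible norm for the latter (your $\rho$ coincides with the paper's $\tfrac{pd(d+2)}{2(d+2)-pd}$).

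The proof of \eqref{LT:inter1}, however, has a genuine gap in the step where you claim that the one-shot spatial interpolation together with H\"older in time lands on a Schr\"odinger-admissible pair $(q_2,b)$. The parameter $\mu=\tfrac{p}{2s_c}$ is forced by the derivative order, and then the H\"older condition $\tfrac1{q_0}=(1-\mu)\tfrac2{p(d+2)}+\tfrac{\mu}{q_2}$ forces $\tfrac{\mu}{q_2}=\tfrac1{q_0}-(1-\mu)\tfrac2{p(d+2)}$. This quantity is \emph{negative} over a substantial part of the range \eqref{sc}. Concretely, take $d=7$ and $s_c=1.4$, so $p\approx0.952$, $r_0\approx 2.03$, $\tfrac1{q_0}\approx 0.025$, while $(1-\mu)\tfrac2{p(d+2)}\approx 0.154$; thus $\tfrac{\mu}{q_2}\approx -0.13$, which means there is no admissible (indeed no positive) $q_2$ for which H\"older applies. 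One finds the same obstruction already for $d=6$ with $s_c\ge 1.2$ or so, and the corresponding spatial exponent $b$ also drops below $2$. Geometrically, the line through the triples of $X$ and $L_{t,x}^{p(d+2)/2}$ in your affine plane $\mathcal L$ exits the region reachable from $\bigl\||\nabla|^{s_c}\cdot\bigr\|_{S^0}$ before reaching $\sigma=s_c$; there simply is no admissible pair sitting on that line at derivative level $s_c$.

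The paper avoids this by a two-step interpolation that detours through the $L_t^\infty$ endpoint: first $\|u\|_{X}\lesssim\|u\|_{X^0}^{1-\frac{p}{2s_c}}\bigl\||\nabla|^{s_c}u\bigr\|_{L_t^{q_0}L_x^{r_0}}^{\frac{p}{2s_c}}$, purely spatial interpolation at fixed time exponent $q_0$; then $\|u\|_{X^0}\lesssim\|u\|_{L_{t,x}^{p(d+2)/2}}^{\frac{p(d+2)}{2q_0}}\|u\|_{L_t^\infty L_x^{pd/2}}^{1-\frac{p(d+2)}{2q_0}}$, an interpolation entirely at derivative order zero where the second endpoint is the Sobolev image of the admissible pair $(\infty,2)$. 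The second step uses $q_0>\tfrac{p(d+2)}{2}$ in an essential way and stays inside the set of norms dominated by $\bigl\||\nabla|^{s_c}u\bigr\|_{S^0}$ precisely because it bends the interpolation path through $\tfrac1q=0$. Replacing your single H\"older step by this two-step argument (or equivalently by interpolating $X$ directly against $L_{t,x}^{p(d+2)/2}$ and $L_t^\infty L_x^{pd/2}$ and then lifting the latter to $L_t^\infty \dot H^{s_c}_x$) repairs the proof; as written, the claim that $2<q_2\le\infty$ and $2\le b<\tfrac{2d}{d-2}$ is false for part of the allowed parameter range and the argument does not go through.
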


\begin{proof}
A simple application of Sobolev embedding yields \eqref{LT:X emb}.

Using interpolation, we obtain
\begin{align*}
\|u\|_{X(I)}
&\lesssim \|u\|_{X^0(I)}^{1-\frac{p}{2s_c}} \bigl\||\nabla|^{s_c} u\bigr\|_{L_t^{q_0}L_x^{r_0}(I\times\R^d)}^{\frac{p}{2s_c}}
\lesssim \|u\|_{X^0(I)}^{1-\frac{p}{2s_c}} \bigl\||\nabla|^{s_c} u\bigr\|_{S^0(I)}^{\frac{p}{2s_c}}.
\end{align*}
On the other hand, as $\tfrac{p(d+2)}2< q_0<\infty$, interpolation followed by Sobolev embedding yields
$$
\|u\|_{X^0(I)}
\lesssim \|u\|_{L_{t,x}^{\frac {p(d+2)}2}(I\times\R^d)}^{\frac{p(d+2)}{2q_0}}
    \|u\|_{L_t^\infty L_x^{\frac {pd}2}(I\times\R^d)}^{1-\frac{p(d+2)}{2q_0}}
\lesssim \|u\|_{L_{t,x}^{\frac {p(d+2)}2}(I\times\R^d)}^{\frac{p(d+2)}{2q_0}} \bigl\||\nabla|^{s_c} u\bigr\|_{S^0(I)}^{1-\frac{p(d+2)}{2q_0}}.
$$
Putting everything together, we derive \eqref{LT:inter1}.

We now turn to \eqref{LT:inter2}; using interpolation once again, we obtain
$$
\|u\|_{L_{t,x}^{\frac {p(d+2)}2}(I\times\R^d)}
\lesssim \|u\|_{X^0(I)}^{\frac{q_0(pd-4)}{p[q_0d-2(d+2)]}}
    \|u\|_{L_t^{\frac{2(d+2)}d} L_x^{\frac{pd(d+2)}{2(d+2)-pd}} (I\times\R^d)}^{1-\frac{q_0(pd-4)}{p[q_0d-2(d+2)]}}
$$
and the claim follows from \eqref{LT:X emb} and Sobolev embedding.
\end{proof}

Finally, we derive estimates that will help us control the nonlinearity.  The main tools we use in deriving these estimates
are the fractional chain rules, Lemmas~\ref{L:Fract chain rule} and~\ref{L:FDFP}.

\begin{lemma}[Nonlinear estimates]\label{LT:L:nonlinear estimate}
Let $d\geq 5$ and assume the critical regularity $s_c$ satisfies \eqref{sc}.  Let $I$ a compact time interval.  Then,
\begin{align}
\|F(u)\|_{Y(I)}\lesssim \|u\|_{X(I)}^{p+1},\label{LT:whole nonlin}
\end{align}
\begin{align}
\|F_z(u+v)&w\|_{Y(I)}+\|F_{\bar z}  (u +v)\bar w \|_{Y(I)} \label{LT:nonlin est} \notag\\
&\lesssim \Bigl( \|u\|_{X(I)}^{\frac{p(s_c-1)}{s_c}}\bigl\||\nabla|^{s_c} u\bigr\|_{S^0(I)}^{\frac{p}{s_c}}
         + \|v\|_{X(I)}^{\frac{p(s_c-1)}{s_c}}\bigl\||\nabla|^{s_c} v\bigr\|_{S^0(I)}^{\frac{p}{s_c}}\Bigr) \|w\|_{X(I)},
\end{align}
and
\begin{align}\label{LT:nonlin sc}
&\bigl\||\nabla|^{s_c}[ F(u+v)- F(u)]\bigr\|_{N^0(I)}\\
&\lesssim \bigl\||\nabla|^{s_c} v\bigr\|_{S^0(I)} \Bigl[ \|v\|_{X^0(I)}^p + \|u\|_{X^0(I)}^{p-1+\frac1{s_c}}\|v\|_{X^0(I)}^{1-\frac1{s_c}}
    +\bigl( \|u\|_{X^0(I)}^{p-1+\frac1{s_c}} + \|v\|_{X^0(I)}^{p-1+\frac1{s_c}}\bigr)\bigl\||\nabla|^{s_c} u\bigr\|_{S^0(I)}^{1-\frac1{s_c}}\Bigr]\notag\\
&\quad +\bigl\||\nabla|^{s_c} u\bigr\|_{S^0(I)}\Bigl(\|v\|_{X^0(I)}^p + \|u\|_{X^0(I)}^{\beta}\|v\|_{X^0(I)}^{p-\beta}\Bigr)  \notag\\
&\quad + \bigl\||\nabla|^{s_c} u\bigr\|_{S^0(I)}^{\frac1{s_c}}\bigl\||\nabla|^{s_c} v\bigr\|_{S^0(I)}^{1-\frac1{s_c}}\|u\|_{X^0(I)}^{1-\frac1{s_c}}\|v\|_{X^0(I)}^{p-1+\frac1{s_c}}\notag
\end{align}
for some $0<\beta<p$.
\end{lemma}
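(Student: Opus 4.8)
The plan is to prove the three estimates in turn, in each case decomposing the nonlinearity so that the $|\nabla|^{p/2}$ (respectively $|\nabla|^{s_c}$) derivative lands either on a single factor $u$ or on a difference, and then applying the fractional chain rules together with H\"older in the exotic admissible pair $(q_0,r_0)$. The bookkeeping is organized so that every exponent that appears is forced by scaling once we fix the spaces in \eqref{spaces}; thus the only genuine content is choosing which chain rule to apply and verifying the side conditions.

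For \eqref{LT:whole nonlin}: write $F(u)=|u|^pu$ and recall $s_c>1$, so $p/2<1<s_c$. If $p$ is an even integer, $F$ is smooth and Lemma~\ref{L:Fract chain rule} with $G=F$, $G'(u)=O(|u|^p)$, gives $\||\nabla|^{p/2}F(u)\|_{Y\text{-space}}\lesssim \||u|^p\|_{L_t^{q_1}L_x^{r_1}}\||\nabla|^{p/2}u\|_{L_t^{q_0}L_x^{r}}$ with the H\"older exponents dictated by the $Y(I)$ norm; distributing $|u|^p$ by H\"older in the $X(I)$ norm produces $\|u\|_{X(I)}^{p+1}$. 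For general $p$ one instead uses Lemma~\ref{L:FDFP} (with $0<p/2<p$, taking $\sigma$ close to $1$ and verifying $(1-\frac{p/2}{p\sigma})q_1>1$), which outputs exactly the factor $\||u|^{p-\frac{s}{\sigma}}\|_{q_1}\||\nabla|^\sigma u\|_{\frac{s}{\sigma}q_2}^{s/\sigma}$ with $s=p/2$; each piece is then put into $X(I)$ via interpolation and Sobolev embedding (Lemma~\ref{LT:L:interpolations}), and the exponents recombine to $p+1$. The constraint \eqref{sc} is what guarantees $2<r_0<d/s_c$ so that all intermediate Lebesgue exponents lie in $(1,\infty)$ and the chain rules apply.

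For \eqref{LT:nonlin est}: here no fractional derivative needs to be moved through a $|\nabla|^{s_c}$; rather, $F_z(u+v)w$ carries only a $|\nabla|^{p/2}$. Since $F_z(z)=O(|z|^p)$ is merely H\"older continuous of order $p$ when $p<1$ — but here $p>4/(d-2)$, so $p$ may exceed $1$; we then write $F_z(u+v)=|u+v|^p\,(\cdot)$ with $|u+v|^p$ itself a product of $\lfloor p\rfloor$ Lipschitz factors and one H\"older factor — we use the product rule Lemma~\ref{L:product rule} to peel the derivative off $w$ (giving the term $\||\nabla|^{p/2}w\|\cdot\|F_z(u+v)\|_{\text{Lebesgue}}$, estimated by $\|w\|_{X(I)}$ times $(\|u\|_{X(I)}+\|v\|_{X(I)})^p\le$ the stated sum after splitting the $p$ powers between $X^0$ and $|\nabla|^{s_c}S^0$ via interpolation), and the remaining term where the derivative hits $F_z(u+v)$, which we treat by the H\"older-continuous chain rule Lemma~\ref{L:FDFP} applied to $z\mapsto F_z(z)$ of order $p$ (if $p\le1$) or to the H\"older factor of $|z|^p$ (if $p>1$), again interpolating the output between $X^0$ and $S^0$. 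Collecting the powers of $u$ (and $v$) gives precisely the split $\|u\|_{X(I)}^{p(s_c-1)/s_c}\||\nabla|^{s_c}u\|_{S^0(I)}^{p/s_c}$, since $p\cdot\frac{s_c-1}{s_c}+p\cdot\frac1{s_c}=p$; the appearance of $1/s_c$ is exactly the ratio $s/\sigma$ with $s=p/2$ dictated by the $Y(I)$-space derivative count, combined with the Bernstein/interpolation conversion from $|\nabla|^{p/2}$-control to $|\nabla|^{s_c}$-control. The $F_{\bar z}$ term is identical.

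For \eqref{LT:nonlin sc}, which is the technical heart: write $F(u+v)-F(u)=\int_0^1\frac{d}{d\theta}F(u+\theta v)\,d\theta=\int_0^1[F_z(u+\theta v)v+F_{\bar z}(u+\theta v)\bar v]\,d\theta$, and apply $|\nabla|^{s_c}=|\nabla|^{1}\,|\nabla|^{s_c-1}$-type decompositions — more precisely, use Lemma~\ref{L:frac deriv of diff} (proved via the Strichartz square-function $\mathcal D_s$ and the pointwise bound \eqref{E:pointwise}) with $G=F_z$ of H\"older order $p$ and $s=s_c-1\in(0,1)$, $w=v$, and $\sigma$ chosen with $s<\sigma p<p$ (possible precisely because \eqref{sc} forces $s_c-1<p$, i.e. $s_c<1+p$). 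Lemma~\ref{L:frac deriv of diff} delivers two groups of terms: one with $|\nabla|^{s_c-1}v$ (upgraded to $\||\nabla|^{s_c}v\|_{S^0}$ via Bernstein, yielding the first bracket in \eqref{LT:nonlin sc}, with the exponents $p-1+\frac1{s_c}$ and $1-\frac1{s_c}$ coming from the $s/\sigma$ split in that lemma), and one with $|\nabla|^\sigma$ of $u$ or $v$, which after choosing $\sigma$ suitably and interpolating gives the remaining terms — the $\beta\in(0,p)$ is whatever power of $u$ survives after this interpolation. One must additionally account for the derivative landing directly on $v$ when $F_z$ is smooth (the $\||\nabla|^{s_c}u\|_{S^0}(\cdots)$ and final lines), handled by the $C^1$ chain rule Lemma~\ref{L:Fract chain rule} plus product rule. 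The hard part — and the reason the statement is so baroque — is keeping the side conditions $(1-p)r_1>1$, $(p-\frac s\sigma)r_2>1$ (and the analogous ones for the $C^1$ pieces) simultaneously satisfiable while the Lebesgue exponents stay Schr\"odinger-admissible; this is exactly where the quantitative form of \eqref{sc}, in particular $s_c<\frac{d+2-\sqrt{(d-2)^2-16}}4$ for $d\ge7$, gets used, as it is the condition ensuring the window for $\sigma$ and for the H\"older exponents is nonempty. Everything else is routine H\"older, Bernstein, and interpolation via Lemma~\ref{LT:L:interpolations}.
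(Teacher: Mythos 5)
Your plan for \eqref{LT:whole nonlin} and \eqref{LT:nonlin est} is broadly workable but needlessly circuitous. For \eqref{LT:whole nonlin} you distinguish between even-integer $p$ (use Lemma~\ref{L:Fract chain rule}) and general $p$ (use Lemma~\ref{L:FDFP}), but $F(u)=|u|^pu$ is $C^1$ for \emph{every} $p>0$, so Lemma~\ref{L:Fract chain rule} with $s=p/2\in(0,1]$ applies directly; the paper's proof is precisely this one line, followed by \eqref{LT:X emb}. Similarly, for the piece of \eqref{LT:nonlin est} where $|\nabla|^{p/2}$ falls on $F_z(u+v)$, your factorization of $|z|^p$ into Lipschitz and H\"older factors for $p>1$ is unnecessary: $F_z$ is itself $C^1$ when $p\geq1$, so Lemma~\ref{L:Fract chain rule} applies; the paper only reaches for Lemma~\ref{L:FDFP} (with $s=p/2$ and $1/2<\sigma<1$) when $p<1$.

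The serious gap is in \eqref{LT:nonlin sc}. Your fundamental-theorem-of-calculus expansion $F(u+v)-F(u)=\int_0^1[F_z(u+\theta v)v+F_{\bar z}(u+\theta v)\bar v]\,d\theta$ does not produce anything of the form $w\cdot(G(u+v)-G(u))$, which is the only shape Lemma~\ref{L:frac deriv of diff} handles. You then try to force Lemma~\ref{L:frac deriv of diff} with $G=F_z$ and $w=v$, but this is the wrong substitution in two ways: the integrand $F_z(u+\theta v)v$ is not a difference, and more importantly the lemma is needed with $w=\nabla u$, not $w=v$. The correct decomposition — and what the paper does — is first to move one whole derivative through:
\begin{align*}
\bigl\||\nabla|^{s_c}[F(u+v)-F(u)]\bigr\|_{N^0(I)}
&\lesssim \bigl\||\nabla|^{s_c-1}\bigl[\nabla v\cdot F'(u+v)\bigr]\bigr\|_{N^0(I)}\\
&\quad+\bigl\||\nabla|^{s_c-1}\bigl[\nabla u\cdot\bigl(F'(u+v)-F'(u)\bigr)\bigr]\bigr\|_{N^0(I)}.
\end{align*}
The first piece is handled by Lemmas~\ref{L:product rule} and~\ref{L:FDFP} and produces the first bracket of \eqref{LT:nonlin sc} (the $\||\nabla|^{s_c}v\|_{S^0}$ line). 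The second piece is exactly where Lemma~\ref{L:frac deriv of diff} enters, with $w=\nabla u$, $G=F'$ (H\"older of order $p$), $s=s_c-1$, and $s_c-1<\sigma p<p$; this yields the $\||\nabla|^{s_c}u\|_{S^0}$ lines, and $\beta=\frac{s_c-1}{\sigma}$ comes out explicitly rather than as ``whatever survives.'' Your version, with $w=v$, would not produce the $\||\nabla|^{s_c}u\|_{S^0}$ factor that the statement requires, and there is no clean way to run the argument from the $\theta$-integral. So while you correctly identified that Lemma~\ref{L:frac deriv of diff} is the key tool and that \eqref{sc} is what makes the $\sigma$-window nonempty, the decomposition you start from makes the proof unravel.
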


\begin{proof}
Throughout the proof, all spacetime norms are on $I\times\R^d$.

Applying Lemma \ref{L:Fract chain rule} followed \eqref{LT:X emb}, we find
\begin{align*}
\|F(u)\|_{Y(I)}\lesssim \|u\|_{X^0(I)}^p \|u\|_{X(I)}\lesssim \|u\|_{X(I)}^{1+p}.
\end{align*}
This establishes \eqref{LT:whole nonlin}.

We now turn to \eqref{LT:nonlin est}; we only treat the first term on the left-hand side, as the second term can
be handled similarly.  By Lemma~\ref{L:product rule} followed by \eqref{LT:X emb},
\begin{align*}
\|F_z(u+ v)w\|_{Y(I)}
&\lesssim \| F_z(u+v)\|_{L_t^{\frac{q_0}p}L_x^{\frac{r_0d}{p(d-r_0s_c)}}} \|w\|_{X(I)}\\
&\quad + \bigl\| |\nabla|^{\frac p2} F_z(u+v) \bigr\|_{L_t^{\frac{q_0}p} L_x^{\frac{2r_0d}{p(2d-2r_0s_c+r_0)}}}\|w\|_{X^0(I)}\\
&\lesssim \Bigl(\|u+v\|_{X^0(I)}^p + \bigl\| |\nabla|^{\frac p2} F_z(u+v) \bigr\|_{L_t^{\frac{q_0}p} L_x^{\frac{2r_0d}{p(2d-2r_0s_c+r_0)}}}\Bigr)
        \|w\|_{X(I)}.
\end{align*}
Thus, the claim will follow from \eqref{LT:X emb}, once we establish
\begin{align}\label{LT:goal}
\bigl\| |\nabla|^{\frac p2} F_z(u+v) &\bigr\|_{L_t^{\frac{q_0}p} L_x^{\frac{2r_0d}{p(2d-2r_0s_c+r_0)}}}\notag\\
&\lesssim \|u\|_{X(I)}^{\frac{p(s_c-1)}{s_c}}\bigl\||\nabla|^{s_c} u\bigr\|_{S^0(I)}^{\frac{p}{s_c}}
         + \|v\|_{X(I)}^{\frac{p(s_c-1)}{s_c}}\bigl\||\nabla|^{s_c} v\bigr\|_{S^0(I)}^{\frac{p}{s_c}}.
\end{align}
For $p\geq 1$, this follows from Lemma~\ref{L:Fract chain rule} and \eqref{LT:X emb}:
\begin{align*}
\bigl\| |\nabla|^{\frac p2} F_z(u+v) \bigr\|_{L_t^{\frac{q_0}p} L_x^{\frac{2r_0d}{p(2d-2r_0s_c+r_0)}}}
&\lesssim \|u+v\|_{X^0(I)}^{p-1}\|u+v\|_{X(I)}
\lesssim \|u+v\|_{X(I)}^p.
\end{align*}
To derive \eqref{LT:goal} for $p<1$, we apply Lemma~\ref{L:FDFP} (with $s:=p/2$ and $1/2<\sigma<1$)
followed by H\"older's inequality in the time variable, Sobolev embedding, and interpolation:
\begin{align*}
\bigl\| |\nabla|^{\frac p2} F_z(u+v)  \bigr\|_{L_t^{\frac{q_0}p} L_x^{\frac{2r_0d}{p(2d-2r_0s_c+r_0)}}}
&\lesssim \|u+v\|_{X^0(I)}^{p-\frac{p}{2\sigma}}
    \bigl\| |\nabla|^\sigma (u +v) \bigr \|_{L_t^{q_0}L_x^{\frac {r_0d}{d-r_0(s_c-\sigma)}}}^{\frac{p}{2\sigma}}\\
&\lesssim \bigl\| |\nabla|^\sigma (u+v) \bigr \|_{L_t^{q_0}L_x^{\frac {r_0d}{d-r_0(s_c-\sigma)}}}^p\\
&\lesssim \|u\|_{X^0(I)}^{p- \frac{p\sigma}{s_c}}\bigl\| |\nabla|^{s_c} u\bigr\|_{S^0(I)}^{\frac{p\sigma}{s_c}}
         + \|v\|_{X^0(I)}^{p- \frac{p\sigma}{s_c}}\bigl\| |\nabla|^{s_c} v\bigr\|_{S^0(I)}^{\frac{p\sigma}{s_c}}.
\end{align*}
Invoking \eqref{LT:X emb}, this settles \eqref{LT:goal} and hence \eqref{LT:nonlin est}.

To prove \eqref{LT:nonlin sc}, we estimate
\begin{align}\label{sc deriv of non}
\bigl\||\nabla|^{s_c} [F(u+v)-F(u)]\bigr\|_{N^0(I)}
&\lesssim \bigl\||\nabla|^{s_c-1} \bigl[\nabla v \cdot F'(u+v)\bigr]\bigr\|_{N^0(I)}\notag\\
&\quad + \bigl\||\nabla|^{s_c-1} \bigl[\nabla u \cdot \bigl(F'(u+v)-F'(u)\bigr)\bigr]\bigr\|_{N^0(I)}.
\end{align}
To estimate the first term on the right-hand side of \eqref{sc deriv of non}, we use Lemmas~\ref{L:product rule} and \ref{L:FDFP}
together with H\"older's inequality and interpolation:
\begin{align*}
\bigl\||\nabla|^{s_c-1} & \bigl[\nabla v \cdot F'(u+v)\bigr]\bigr\|_{N^0(I)}\\
&\lesssim \bigl\||\nabla|^{s_c} v\bigr\|_{S^0(I)} \bigl( \|u\|_{X^0(I)}^p +  \|v\|_{X^0(I)}^p \bigr)\\
&\quad + \bigl\||\nabla|^{s_c} v\bigr\|_{S^0(I)}^{\frac1{s_c}} \|v\|_{X^0(I)}^{1-\frac1{s_c}} \|u+v\|_{X^0(I)}^{p-1+\frac1{s_c}}
    \bigl\||\nabla|^{s_c} (u+v) \bigr\|_{S^0(I)}^{1-\frac1{s_c}}\\
&\lesssim \bigl\||\nabla|^{s_c} v\bigr\|_{S^0(I)} \Bigl[ \|v\|_{X^0(I)}^p + \|u\|_{X^0(I)}^{p-1+\frac1{s_c}}\|v\|_{X^0(I)}^{1-\frac1{s_c}}\\
&\qquad\qquad\qquad\qquad\qquad\quad\,\, +\bigl( \|u\|_{X^0(I)}^{p-1+\frac1{s_c}} + \|v\|_{X^0(I)}^{p-1+\frac1{s_c}}\bigr)\bigl\||\nabla|^{s_c} u\bigr\|_{S^0(I)}^{1-\frac1{s_c}}\Bigr].
\end{align*}
To estimate the second term on the right-hand side of \eqref{sc deriv of non}, we use Lemma~\ref{L:frac deriv of diff}
together with H\"older's inequality, interpolation, and \eqref{LT:X emb}:
\begin{align*}
\bigl\||\nabla|^{s_c-1} &  \bigl[\nabla u\cdot \bigl(F'(u+v)-F'(u)\bigr)\bigr]\bigr\|_{N^0(I)}\\
&\lesssim \bigl\||\nabla|^{s_c} u\bigr\|_{S^0(I)}\|v\|_{X^0(I)}^p + \bigl\||\nabla|^{s_c} u\bigr\|_{S^0(I)}\|u\|_{X^0(I)}^{\frac{s_c-1}\sigma}\|v\|_{X^0(I)}^{p-\frac{s_c-1}\sigma}\\
&\quad + \bigl\||\nabla|^{s_c} u\bigr\|_{S^0(I)}^{\frac1{s_c}} \|u\|_{X^0(I)}^{1-\frac1{s_c}} \bigl\||\nabla|^{s_c}v\bigr\|_{S^0(I)}^{1-\frac1{s_c}}\|v\|_{X^0(I)}^{p-1+\frac1{s_c}},
\end{align*}
where $s_c-1<\sigma p<p$.  Denoting $\beta:= \frac{s_c-1}\sigma$ and collecting all the estimates above we derive \eqref{LT:nonlin sc}.
\end{proof}

We have now all the tools we need to attack Theorem~\ref{T:stab}.  We start with the following:

\begin{lemma}[Short-time perturbations]\label{L:short time stab}
Let $d\geq 5$ and assume the critical regularity $s_c$ satisfies \eqref{sc}.  Let $I$ be a compact time interval containing zero and let $\tilde u$
be an approximate solution to \eqref{nls} on $I\times\R^d$ in the sense that
$$
i\tilde u_t =-\Delta \tilde u + F(\tilde u) + e
$$
for some function $e$.  Assume that
$$
\|\tilde u\|_{L_t^{\infty}\dot H_x^{s_c}(I\times\R^d)}\leq E
$$
for some positive constant $E$.  Moreover, let $u_0\in \dot H^{s_c}_x$ and assume that
\begin{align}
\|\tilde u\|_{X(I)}&\le \delta \label{LT:small u in X}\\
\|u_0-\tilde u_0\|_{\dot H_x^{s_c}}&\leq \eps \label{LT:close id}\\
\bigl\||\nabla|^{s_c} e\bigr\|_{N^0(I)}&\le \eps \label{LT:small error}
\end{align}
for some small $0<\delta=\delta(E)$ and $0<\eps<\eps_0(E)$.  Then there exists a unique solution $u:I\times\R^d\to\C$
to \eqref{nls} with initial data $u_0$ at time $t=0$; it satisfies
\begin{align}
\|u-\tilde u\|_{X(I)}&\lesssim \eps \label{LT:diff in X}\\
\bigl\||\nabla|^{s_c} (u-\tilde u)\bigr\|_{S^0(I)}&\lesssim \eps^{c(d,p)} \label{LT:diff in S}\\
\bigl\||\nabla|^{s_c}u \bigr\|_{S^0(I)}&\lesssim E \label{LT:u in S}\\
\|F(u)-F(\tilde u)\|_{Y(I)}&\lesssim\eps \label{LT:diff nonlin in Y}\\
\bigl\||\nabla|^{s_c} \bigl[F(u)-F(\tilde u)\bigr]\bigr\|_{N^0(I)}&\lesssim \eps^{c(d,p)}, \label{LT:diff nonlin in N}
\end{align}
for some positive constant $c(d,p)$.
\end{lemma}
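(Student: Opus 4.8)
The plan is to run a standard contraction-mapping / continuity argument for the difference equation satisfied by $w := u - \tilde u$, working in the small-derivative critical spaces $X(I)$, $X^0(I)$, $Y(I)$ introduced in \eqref{spaces}, and then upgrade to the full $S^0$-level estimates at the end. First I would write down the equation obeyed by $w$, namely
\begin{equation*}
iw_t = -\Delta w + \bigl[F(\tilde u + w) - F(\tilde u)\bigr] - e, \qquad w(0) = u_0 - \tilde u_0 .
\end{equation*}
Applying the exotic Strichartz inequality of Lemma~\ref{LT:L:exotic strichartz} together with the embedding \eqref{LT:X emb}, one gets
\begin{equation*}
\|w\|_{X(I)} \lesssim \|u_0 - \tilde u_0\|_{\dot H_x^{s_c}} + \|F(\tilde u+w) - F(\tilde u)\|_{Y(I)} + \bigl\||\nabla|^{s_c} e\bigr\|_{N^0(I)} .
\end{equation*}
The point of the $X$/$Y$ scale is that the nonlinear term can be estimated by Lemma~\ref{LT:L:nonlinear estimate}: writing $F(\tilde u + w) - F(\tilde u) = \int_0^1 \frac{d}{d\theta} F(\tilde u + \theta w)\, d\theta$, which produces terms of the shape $F_z(\tilde u + \theta w) w$ and $F_{\bar z}(\tilde u + \theta w)\bar w$, estimate \eqref{LT:nonlin est} gives a bound of the form $\lesssim (\delta^{p} + \|w\|_{X(I)}^{p})\,\|w\|_{X(I)}$ once we also invoke \eqref{LT:u in S} to control the $S^0$-pieces of $\tilde u$ by $E$ (from the hypothesis $\|\tilde u\|_{L^\infty_t \dot H^{s_c}_x} \le E$ combined with a Strichartz estimate on a partition of $I$ as in the proof of \eqref{LT:tilde u cubic}). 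Feeding this back, a standard continuity/bootstrap argument shows that for $\delta = \delta(E)$ small and $\eps < \eps_0(E)$ small, $\|w\|_{X(I)} \lesssim \eps$, which is \eqref{LT:diff in X}.

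Next I would promote this to the $S^0$-level bounds. From the Strichartz inequality applied to the $w$-equation,
\begin{equation*}
\bigl\||\nabla|^{s_c} w\bigr\|_{S^0(I)} \lesssim \|u_0 - \tilde u_0\|_{\dot H^{s_c}_x} + \bigl\||\nabla|^{s_c}\bigl[F(\tilde u + w) - F(\tilde u)\bigr]\bigr\|_{N^0(I)} + \bigl\||\nabla|^{s_c} e\bigr\|_{N^0(I)} .
\end{equation*}
Now I apply \eqref{LT:nonlin sc} with $u \rightsquigarrow \tilde u$, $v \rightsquigarrow w$: the right-hand side is a sum of products, each containing at least one factor of $\|w\|_{X^0(I)} \lesssim \|w\|_{X(I)} \lesssim \eps$ or $\bigl\||\nabla|^{s_c} w\bigr\|_{S^0(I)}$, times powers of $\|\tilde u\|_{X^0(I)} \lesssim E$-bounded quantities, $\|w\|_{X^0(I)} \lesssim \eps$, $\bigl\||\nabla|^{s_c}\tilde u\bigr\|_{S^0(I)} \lesssim E$, and $\bigl\||\nabla|^{s_c} w\bigr\|_{S^0(I)}$. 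Since every term carries a positive power of either $\eps$ or $\bigl\||\nabla|^{s_c} w\bigr\|_{S^0(I)}$ (and the exponents of the latter, such as $1 - \tfrac1{s_c}$, lie strictly between $0$ and $1$ by \eqref{sc}, so $\delta$-smallness does not help directly but a Young/continuity argument absorbs them), one obtains $\bigl\||\nabla|^{s_c} w\bigr\|_{S^0(I)} \lesssim \eps^{c(d,p)}$ for some $c(d,p) \in (0,1]$ determined by the smallest such exponent; this is \eqref{LT:diff in S}. Then \eqref{LT:u in S} follows by writing $|\nabla|^{s_c} u = |\nabla|^{s_c} \tilde u + |\nabla|^{s_c} w$ and using the $\tilde u$-bound $\lesssim E$ together with \eqref{LT:diff in S}. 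The nonlinearity bounds \eqref{LT:diff nonlin in Y} and \eqref{LT:diff nonlin in N} are then immediate: the first from \eqref{LT:whole nonlin}-type bookkeeping plus \eqref{LT:nonlin est} applied to the telescoped difference, giving $\lesssim \eps$; the second is literally the quantity already bounded above, $\lesssim \eps^{c(d,p)}$. Uniqueness of $u$ follows from uniqueness in the contraction, exactly as in Theorem~\ref{T:standard lwp}.

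The main obstacle I anticipate is purely bookkeeping-but-delicate: carefully choosing the interpolation exponents so that every term produced by \eqref{LT:nonlin sc} genuinely carries a \emph{positive} power of the small quantities and the scaling/Hölder relations in the time and space variables close up — in particular verifying that $(q_0, r_0)$ and the associated dual pairs match the exponents in \eqref{spaces}, that $2 < r_0 < d/s_c$ and $\tfrac{p(d+2)}2 < q_0 < \infty$ (which is where the constraint \eqref{sc} is used), and that the factors $\bigl\||\nabla|^{s_c} w\bigr\|_{S^0(I)}^{1-1/s_c}$ etc. can be absorbed by the left-hand side via a continuity argument rather than naive smallness. A secondary subtlety, handled exactly as in the proof of Theorem~\ref{T:stab cubic}, is that to deduce \eqref{LT:u in S} from the hypothesis $\|\tilde u\|_{L^\infty_t\dot H^{s_c}_x}\le E$ and $S_I(\tilde u)\le L$ one must first partition $I$ into $O_{E,L}(1)$ subintervals on which $\|\tilde u\|_X$ is $\le\delta$, run the above on each, and sum; this forces $\eps_1$ to depend on both $E$ and $L$, and makes $c_1, c_2$ in Theorem~\ref{T:stab} depend on the number of subintervals, hence on $E$ and $L$, which is why those exponents are allowed to degrade in the final statement. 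Existence of $u$ under the standing assumption $u_0\in L^2_x$ is supplied by Theorem~\ref{T:standard lwp}, and the $L^2_x$ assumption is removed a posteriori by the limiting argument already described in the proof of Theorem~\ref{T:stab cubic}.
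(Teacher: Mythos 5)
Your proposal follows the same route as the paper: write the equation for $w=u-\tilde u$, run the exotic Strichartz estimate of Lemma~\ref{LT:L:exotic strichartz} in the $X/Y$ scale together with the nonlinear estimates \eqref{LT:nonlin est} and \eqref{LT:nonlin sc}, run a coupled bootstrap for $\|w\|_{X(I)}$ and $\bigl\||\nabla|^{s_c}w\bigr\|_{S^0(I)}$, and finish by triangle inequality and bookkeeping. The structure is right and the proof would close.

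Two places where you have blurred what belongs in this lemma versus what belongs in the proof of Theorem~\ref{T:stab}. First, you invoke a partition of $I$ into subintervals to establish $\bigl\||\nabla|^{s_c}\tilde u\bigr\|_{S^0(I)}\lesssim E$, citing the hypothesis $S_I(\tilde u)\le L$; but $S_I(\tilde u)\le L$ is \emph{not} a hypothesis of this lemma. Here the stronger hypothesis $\|\tilde u\|_{X(I)}\le\delta$ is available, and by \eqref{LT:inter2} this gives $\|\tilde u\|_{L_{t,x}^{p(d+2)/2}}\lesssim\delta^{\theta_2}E^{1-\theta_2}$, so a single Strichartz application plus absorption establishes $\bigl\||\nabla|^{s_c}\tilde u\bigr\|_{S^0(I)}\lesssim E$ directly on all of $I$ — no partition. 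The partition argument is what you use later to deduce Theorem~\ref{T:stab} from this lemma, and that is precisely where $\eps_1$ picks up dependence on $L$ and the exponents $c_1,c_2$ degrade. Second, you skip a preliminary step that the paper actually needs: one first shows $\bigl\|e^{it\Delta}\tilde u_0\bigr\|_{X(I)}\lesssim\delta$ (via exotic Strichartz and \eqref{LT:whole nonlin}), hence $\bigl\|e^{it\Delta}u_0\bigr\|_{X(I)}\lesssim\delta$ by the triangle inequality and \eqref{LT:close id}, hence $\|u\|_{X(I)}\lesssim\delta$ by a bootstrap. This is used when applying \eqref{LT:nonlin sc} with $u\rightsquigarrow\tilde u$, $v\rightsquigarrow w$ to replace some powers of $\|w\|_{X^0(I)}$ by $\delta$-factors; if you prefer to keep $\|w\|_{X(I)}$ as a bootstrap variable everywhere you can avoid it, but it should be stated. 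Finally, a small imprecision: \eqref{LT:nonlin est} does not yield $\bigl(\delta^p+\|w\|_X^p\bigr)\|w\|_X$ but rather $\bigl(\delta^{p(s_c-1)/s_c}E^{p/s_c}+\|w\|_X^{p(s_c-1)/s_c}\bigl\||\nabla|^{s_c}w\bigr\|_{S^0}^{p/s_c}\bigr)\|w\|_X$, which is what makes the bootstrap genuinely coupled rather than scalar; you acknowledge the coupling later, so this is cosmetic.
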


\begin{proof}
As explained at the beginning of the proof of Theorem~\ref{T:stab cubic}, we may assume that $u$ exists and merely show that it obeys
the estimates stated above.

We start by deriving some bounds on $\tilde u$ and $u$.  By Strichartz, Corollary~\ref{C:s deriv}, Lemma~\ref{LT:L:interpolations},
\eqref{LT:small u in X}, and \eqref{LT:small error},
\begin{align*}
\bigl\||\nabla|^{s_c} \tilde u\bigr\|_{S^0(I)}
&\lesssim \|\tilde u\|_{L_t^\infty \dot H^{s_c}_x(I\times\R^d)} + \bigl\||\nabla|^{s_c} F(\tilde u)\bigr\|_{N^0(I)}
    + \bigl\||\nabla|^{s_c} e\bigr\|_{N^0(I)} \\
&\lesssim E + \|\tilde u\|_{L_{t,x}^{\frac{p(d+2)}2}(I\times\R^d)}^p \bigl\||\nabla|^{s_c} \tilde u\bigr\|_{S^0(I)} +\eps\\
&\lesssim E + \delta^{p\theta_2} \bigl\||\nabla|^{s_c} \tilde u\bigr\|_{S^0(I)}^{1+p(1-\theta_2)} +\eps,
\end{align*}
where $\theta_2$ is as in Lemma~\ref{LT:L:interpolations}.  Choosing $\delta$ small depending on $d,p,E$ and $\eps_0$ sufficiently small
depending on $E$, we obtain
\begin{equation}\label{LT:utilde in S}
\bigl\||\nabla|^{s_c} \tilde u\bigr\|_{S^0(I)}\lesssim E.
\end{equation}
Moreover, by Lemma~\ref{LT:L:exotic strichartz}, Lemma~\ref{LT:L:nonlinear estimate}, \eqref{LT:small u in X}, and \eqref{LT:small error},
\begin{align*}
\bigl\| e^{it\Delta} \tilde u_0 \bigr\|_{X(I)}
&\lesssim \|\tilde u\|_{X(I)} + \|F(\tilde u)\|_{Y(I)} + \bigl\||\nabla|^{s_c} e\bigr\|_{N^0(I)}
\lesssim \delta + \delta^{\frac {d+2}{d-2}} + \eps
\lesssim \delta,
\end{align*}
provided $\delta$ and $\eps_0$ are chosen sufficiently small.  Combining this with the triangle inequality, \eqref{LT:X emb}, the Strichartz inequality,
and \eqref{LT:close id}, we obtain
\begin{align*}
\bigl\|e^{it\Delta} u_0 \bigr\|_{X(I)}
\lesssim \bigl\| e^{it\Delta} \tilde u_0 \bigr\|_{X(I)}+ \|u_0-\tilde u_0\|_{\dot H^{s_c}_x}
\lesssim \delta + \eps
\lesssim \delta.
\end{align*}
Thus, another application of Lemma~\ref{LT:L:exotic strichartz} combined with Lemma~\ref{LT:L:nonlinear estimate} gives
$$
\|u\|_{X(I)}
\lesssim \bigl\|e^{it\Delta} u_0\bigr\|_{X(I)} + \|F(u)\|_{Y(I)}
\lesssim \delta + \|u\|_{X(I)}^{\frac{d+2}{d-2}}.
$$
Choosing $\delta$ sufficiently small, the usual bootstrap argument yields
\begin{align}\label{LT:u in X}
\|u\|_{X(I)} \lesssim \delta.
\end{align}

Next we derive the claimed bounds on $w:=u-\tilde u$.  Note that $w$ is a solution to
\begin{equation*}
\begin{cases}
iw_t=-\Delta w + F(\tilde u + w)-F(\tilde u)- e\\
w(t_0)=u_0-\tilde u_0.
\end{cases}
\end{equation*}
Using Lemma~\ref{LT:L:exotic strichartz} together with Lemma~\ref{LT:L:interpolations}, the Strichartz inequality,
\eqref{LT:close id}, and \eqref{LT:small error}, we see that
\begin{align*}
\|w\|_{X(I)}
&\lesssim \|u_0-\tilde u_0\|_{\dot H_x^{s_c}}+\bigl\||\nabla|^{s_c} e\bigr\|_{N^0(I)}
    +\|F(u)-F(\tilde u)\|_{Y(I)}
\lesssim \eps + \|F(u)-F(\tilde u)\|_{Y(I)}.
\end{align*}
To estimate the difference of the nonlinearities, we use Lemma~\ref{LT:L:nonlinear estimate}, \eqref{LT:small u in X}, \eqref{LT:utilde in S}:
\begin{align}\label{LT:diff F(u) in Y}
\|F(u)-F(\tilde u)\|_{Y(I)}
&\lesssim \Bigl[ \|\tilde u\|_{X(I)}^{\frac{p(s_c-1)}{s_c}}\bigl\||\nabla|^{s_c} \tilde u\bigr\|_{S^0(I)}^{\frac{p}{s_c}}
         + \|w\|_{X(I)}^{\frac{p(s_c-1)}{s_c}}\bigl\||\nabla|^{s_c} w\bigr\|_{S^0(I)}^{\frac{p}{s_c}} \Bigr] \|w\|_{X(I)}\notag\\
&\lesssim \delta^{\frac{p(s_c-1)}{s_c}} E^{\frac{p}{s_c}} \|w\|_{X(I)}
        + \bigl\||\nabla|^{s_c} w\bigr\|_{S^0(I)}^{\frac{p}{s_c}} \|w\|_{X(I)}^{1+\frac{p(s_c-1)}{s_c}}.
\end{align}
Thus, choosing $\delta$ sufficiently small depending only on $E$, we obtain
\begin{align}\label{LT:w in X}
\|w\|_{X(I)} \lesssim \eps + \bigl\||\nabla|^{s_c} w\bigr\|_{S^0(I)}^{\frac{p}{s_c}} \|w\|_{X(I)}^{1+\frac{p(s_c-1)}{s_c}}.
\end{align}

On the other hand, by the Strichartz inequality and the hypotheses,
\begin{align}\label{LT:w in S est}
\bigl\||\nabla|^{s_c} w\bigr\|_{S^0(I)}
&\lesssim \|u_0-\tilde u_0\|_{\dot H_x^{s_c}}  + \bigl\||\nabla|^{s_c} e\bigr\|_{ N^0(I)} + \bigl\| |\nabla|^{s_c} \bigl[ F(u)-F(\tilde u) \bigr] \bigr\|_{N^0(I)}\notag\\
&\lesssim \eps + \bigl\| |\nabla|^{s_c} \bigl[ F(u)-F(\tilde u) \bigr] \bigr\|_{N^0(I)}.
\end{align}
To estimate the difference of the nonlinearities, we use \eqref{LT:nonlin sc} together with Lemma~\ref{LT:L:interpolations},
\eqref{LT:small u in X}, \eqref{LT:utilde in S}, and \eqref{LT:u in X},
\begin{align}\label{LT:diff F(u) in N}
\bigl\| |\nabla|^{s_c}\bigl[ F(u)-F(\tilde u) \bigr] \bigr\|_{N^0(I)}
&\lesssim \bigl\| |\nabla|^{s_c} w\bigr\|_{S^0(I)}\Bigl(\delta +\delta^{p-1+\frac 1{s_c}} E^{1-\frac1{s_c}} \Bigr)\\
&\quad +  E \delta^{\beta}\|w\|_{X(I)}^{p-\beta} + \delta^{1-\frac1{s_c}}E^{\frac1{s_c}}\bigl\| |\nabla|^{s_c} w\bigr\|_{S^0(I)}^{1-\frac1{s_c}}\|w\|_{X(I)}^{p-1+\frac1{s_c}}\notag
\end{align}
for some $0<\beta<p$.  Thus, choosing $\delta$ small depending only on $E$, \eqref{LT:w in S est} implies
\begin{align}\label{LT:w in S}
\bigl\| |\nabla|^{s_c} w\bigr\|_{S^0(I)}
\lesssim \eps + \|w\|_{X(I)}^{p-\beta} + \bigl\| |\nabla|^{s_c} w\bigr\|_{S^0(I)}^{1-\frac1{s_c}} |w\|_{X(I)}^{p-1+\frac1{s_c}}.
\end{align}

Combining \eqref{LT:w in X} with \eqref{LT:w in S}, the usual bootstrap argument yields \eqref{LT:diff in X} and \eqref{LT:diff in S},
provided  $\eps_0$ is chosen sufficiently small depending on $E$.  By the triangle inequality, \eqref{LT:diff in S}
and \eqref{LT:utilde in S} imply \eqref{LT:u in S}.

Claims \eqref{LT:diff nonlin in Y} and \eqref{LT:diff nonlin in N} follow from \eqref{LT:diff F(u) in Y} and \eqref{LT:diff F(u) in N}
combined with \eqref{LT:diff in X} and \eqref{LT:diff in S}, provided we take $\delta$ and $\eps_0$ sufficiently small depending on $E$.
\end{proof}

We are finally in a position to prove the stability result.

\begin{proof}[Proof of Theorem~\ref{T:stab}]
Our first goal is to show
\begin{equation}\label{LT:tilde u}
\bigl\| |\nabla|^{s_c} \tilde u\bigr\|_{S^0(I)}\le C(E,L).
\end{equation}
Indeed, by \eqref{LT: finite s-t} we may divide $I$ into $J_0=J_0(L, \eta)$ subintervals $I_j=[t_j,t_{j+1}]$
such that on each spacetime slab $I_j\times\R^d$
$$
\|\tilde u\|_{L_{t,x}^{\frac {p(d+2)}2}(I_j\times\R^d)}\le \eta
$$
for a small constant $\eta>0$ to be chosen in a moment.  By the Strichartz inequality combined with Corollary~\ref{C:s deriv}, \eqref{LT:finite energy}
and \eqref{LT:error small},
\begin{align*}
\bigl\| |\nabla|^{s_c} \tilde u\bigr\|_{S^0(I_j)}
&\lesssim \|\tilde u(t_j)\|_{\dot H_x^{s_c}} + \bigl\| |\nabla|^{s_c} e \bigr\|_{ N^0(I_j)} + \bigl\| |\nabla|^{s_c} F(\tilde u)\bigr\|_{N^0(I_j)}\\
&\lesssim E + \eps + \|\tilde u\|_{L_{t,x}^{\frac {p(d+2)}2}(I_j\times\R^d)}^p \bigl\| |\nabla|^{s_c} \tilde u\bigr\|_{S^0(I_j)}\\
&\lesssim E + \eps + \eta^p \bigl\| |\nabla|^{s_c} \tilde u\bigr\|_{S^0(I_j)}.
\end{align*}
Thus, choosing $\eta>0$ small depending on $d,p$ and $\eps_1$ sufficiently small depending on $E$, we obtain
$$
\bigl\| |\nabla|^{s_c} \tilde u\bigr\|_{S^0(I_j)}\lesssim E.
$$
Summing this over all subintervals $I_j$, we derive \eqref{LT:tilde u}.

Using Lemma~\ref{LT:L:interpolations} together with \eqref{LT:tilde u}, we obtain
\begin{align}
\|\tilde u\|_{X(I)}&\le C(E,L) \label{LT:utilde bdd in X}
\end{align}
By \eqref{LT:utilde bdd in X}, we may divide $I$ into $J_1=J_1(E,L)$ subintervals $I_j=[t_j,t_{j+1}]$ such that on
each spacetime slab $I_j\times\R^d$
$$
\|\tilde u\|_{X(I_j)}\le \delta
$$
for some small $\delta=\delta(E)>0$ as in Lemma~\ref{L:short time stab}.

Thus, choosing $\eps_1$ sufficiently small (depending on $J_1$ and $E$), we may apply Lemma~\ref{L:short time stab}
to obtain for each $0\leq j< J_1$ and all $0<\eps<\eps_1$,
\begin{equation}\label{bounds on j}
\begin{aligned}
\|u-\tilde u\|_{X(I_j)} &\leq C(j)\eps^{c(d,p)^j}\\
\bigl\| |\nabla|^{s_c}( u-\tilde u) \bigr\|_{S^0(I_j)}&\leq C(j) \eps^{c(d,p)^{j+1}}\\
\bigl\| |\nabla|^{s_c} u \bigr\|_{\dot S^1(I_j)}&\leq C(j) E\\
\|F(u)-F(\tilde u)]\|_{Y(I_j)}&\leq C(j)\eps^{c(d,p)^j}\\
\bigl\| |\nabla|^{s_c}\bigl[F(u)-F(\tilde u)\bigr] \bigr\|_{N^0(I_j)}&\leq C(j)\eps^{c(d,p)^{j+1}},
\end{aligned}
\end{equation}
provided we can show
\begin{align}\label{LT:left}
\|u(t_j)-\tilde u(t_j)\|_{\dot H^{s_c}}&\lesssim \eps^{c(d,p)^j}\leq \eps_0
\end{align}
for each $0\leq j<J_1$, where $\eps_0$ is as in Lemma~\ref{L:short time stab}.  By the Strichartz inequality and the inductive hypothesis,
\begin{align*}
\|u(t_{j})- \tilde u(t_{j})\|_{\dot{H}^{s_c}_x}
&\lesssim \|u_0-\tilde u_0\|_{\dot{H}^{s_c}_x} + \bigl\| |\nabla|^{s_c} e \bigr\|_{N^0([0, t_{j}])}
    + \bigl\| |\nabla|^{s_c} \bigl[ F(u)-F(\tilde u) \bigr]\bigr\|_{N^0([0, t_j])}\\
&\lesssim \eps +\sum_{k=0}^{j-1}C(k) \eps^{c(d,p)^{k+1}}.
\end{align*}
Taking $\eps_1$ sufficiently small compared to $\eps_0$, we see that \eqref{LT:left} is satisfied.

Summing the bounds in \eqref{bounds on j} over all subintervals $I_j$ and using Lemma~\ref{LT:L:interpolations}, we derive \eqref{LT:close in s-t}
through \eqref{LT:u in Sc}.  This completes the proof of the theorem.
\end{proof}

%
%
%
%

\section{Reduction to almost periodic solutions}\label{S:Reduct}

The goal of this section is to prove Theorem~\ref{T:reduct}.  In order to achieve this, we repeat the argument presented in \cite{Berbec}.
Since the procedure is by now standard, we content ourselves with identifying the main steps and indicating, whenever necessary, the changes
that appear with respect to the presentation in \cite{Berbec}.  See also \cite{kenig-merle, kenig-merle:1/2, kenig-merle:wave sup, Notes, tvz:cc}
for similar arguments in other contexts.

We start by presenting the setup.  Throughout this section, we fix a dimension $d\geq 5$ and assume that either the critical regularity is
$s_c=\frac{d-2}2$ or it satisfies \eqref{sc}.  For any $0\leq E_0<\infty$, we define
$$
L(E_0):=\sup \{S_I(u):\, u:\ird\to \C \text{ such that } \sup_{t\in I}\bigl\| |\nabla|^{s_c} u(t)\bigr\|_2^2 \leq  E_0\},
$$
where the supremum  is taken over all solutions $u:\ird\to \C$ to \eqref{nls} obeying $\bigl\| |\nabla|^{s_c} u(t)\bigr\|_2^2\leq E_0$.
Thus, $L:\bigl[0,\infty) \to [0, \infty]$ is a non-decreasing function.  Moreover, from Theorem~\ref{T:local},
\begin{align*}
L(E_0)\lesssim_d E_0^{\frac{p(d+2)}4} \quad \text{for} \quad  E_0\leq \eta_0,
\end{align*}
where $\eta_0=\eta_0(d,p)$ is the threshold from the small data theory.

From Theorems~\ref{T:stab cubic} and \ref{T:stab}, we see that $L$ is continuous in both settings considered here.  Therefore, there must exist
a unique \emph{critical} $E_c\in(0,\infty]$ such that $L(E_0)<\infty$ for $E_0<E_c$ and $L(E_0)=\infty$ for $E_0\geq E_c$.  In particular, if
$u:\ird\to \C$ is a maximal-lifespan solution to \eqref{nls} such that $\sup_{t\in I}\bigl\| |\nabla|^{s_c} u(t)\bigr\|_2^2 < E_c$, then $u$
is global and moreover,
$$
S_\R(u)\leq L\bigl(\|u\|_{L_t^\infty \dot H^{s_c}_x}^2\bigr).
$$
Failure of Theorems~\ref{T:main cubic} or \ref{T:main} is equivalent to $0 < E_c < \infty$.

Following the presentation in \cite{Berbec, tvz:cc}, the main step in proving Theorem~\ref{T:reduct} is to prove a Palais--Smale condition
modulo the symmetries of the equation.  With the Palais--Smale condition in place, the proof of Theorem~\ref{T:reduct} is standard;
see, for example, \cite{kenig-merle, kenig-merle:1/2, Berbec, tvz:cc}.

\begin{proposition}[Palais-Smale condition modulo symmetries]\label{P:palais-smale}
Assume we are either in the setting of Theorem~\ref{T:main cubic} or Theorem~\ref{T:main}.  Let $u_n:I_n\times\R^d\mapsto \C$ be a sequence of solutions
to \eqref{nls} such that
\begin{align}\label{max ke}
\limsup_{n\to \infty} \sup_{t\in I_n}\bigl\| |\nabla|^{s_c} u_n(t)\bigr\|_2^2 =E_c
\end{align}
and let $t_n\in I_n$ be a sequence of times such that
\begin{equation*}
\lim_{n\to \infty} S_{\ge t_n}(u_n) = \lim_{n\to \infty} S_{\le t_n}(u_n) = \infty.
\end{equation*}
Then the sequence $u_n(t_n)$ has a subsequence which converges in $\dot H^{s_c}_x(\R^d)$ modulo symmetries.
\end{proposition}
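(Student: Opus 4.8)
The plan is to follow the now-standard concentration-compactness strategy, feeding the linear profile decomposition (Lemma~\ref{L:cc}) into the stability theory (Theorems~\ref{T:stab cubic} and~\ref{T:stab}). First I would normalize: using the symmetry group $G$, we may rescale and translate so that $N(t_n) = 1$ and $x(t_n) = 0$, and, after time-translating each $u_n$, assume $t_n = 0$; thus we reduce to studying the sequence $u_n(0)$, which is bounded in $\dot H^{s_c}_x$ by \eqref{max ke}, and which satisfies $S_{\ge 0}(u_n) \to \infty$ and $S_{\le 0}(u_n) \to \infty$. Apply Lemma~\ref{L:cc} to $u_n(0)$ to obtain profiles $\phi^j$, group elements $\gnj = g_{\theta_n^j, \xnj, \lnj}$, and times $\tnj$, together with the decomposition $u_n(0) = \sum_{j=1}^J \gnj e^{i\tnj\Delta}\phi^j + \wnJ$ enjoying the asymptotic orthogonality of parameters, the decoupling of the $\dot H^{s_c}_x$-norms, and the vanishing of $\|e^{it\Delta}\wnJ\|_{L_{t,x}^{p(d+2)/2}}$.

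The heart of the argument is to show that there is exactly one profile, that it carries all of the critical norm (so $\wnJ \to 0$), and that the associated times $\tnj$ converge (so that it is an honest initial datum rather than a piece of linear flow). To do this I would associate to each profile $\phi^j$ a nonlinear profile $v^j$: a maximal-lifespan solution to \eqref{nls} which agrees asymptotically with $e^{it\Delta}\phi^j$ — if $\tnj \to \pm\infty$ this means $v^j$ scatters to $e^{it\Delta}\phi^j$ in that time direction (using the scattering half of Theorem~\ref{T:local}), and if $\tnj \to \tau^j \in \R$ we simply solve \eqref{nls} with data $e^{i\tau^j\Delta}\phi^j$. One then forms the approximate solution $\unJ(t) := \sum_{j=1}^J T_{\gnj} v^j(t + \tnj(\lnj)^2) + e^{it\Delta}\wnJ$, i.e. a rescaled-and-translated superposition of nonlinear profiles plus the linear evolution of the error. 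The decoupling of parameters makes the cross terms in $F(\unJ) - \sum_j F(\cdot)$ negligible, so $\unJ$ solves \eqref{nls} up to a small error in $N^0(I)$. Now suppose, for contradiction, that more than one profile is nontrivial, or that the surviving profile has critical norm strictly less than $E_c$, or that $\|\wnJ\| \not\to 0$: in each case the $\dot H^{s_c}_x$-decoupling forces every nonlinear profile to have critical norm $< E_c$, hence (by definition of $E_c$) finite scattering size, and summing these — using an orthogonality argument to control $S_I(\sum_j \cdot)$ — would bound $S_I(\unJ)$ uniformly in $n$; the stability Theorems~\ref{T:stab cubic}/\ref{T:stab} would then transfer this to a uniform bound on $S_I(u_n)$, contradicting $S_{\ge 0}(u_n) + S_{\le 0}(u_n) \to \infty$. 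Therefore there is a single profile $\phi := \phi^1$ with $\||\nabla|^{s_c}\phi\|_2^2 = E_c$ and $\wnJ \to 0$ in $\dot H^{s_c}_x$.

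It remains to upgrade this to convergence of $u_n(0)$ itself. With one profile, $u_n(0) = \gnj e^{i\tnj\Delta}\phi + w_n$ with $w_n \to 0$; we must rule out $\tnj \to +\infty$ and $\tnj \to -\infty$. If $\tnj \to +\infty$, then $e^{it\Delta}$ applied to $u_n(0)$ has small scattering size on $[0,\infty)$ (by the Strichartz bound on the tail $\|e^{it\Delta}e^{i\tnj\Delta}\phi\|_{L_{t,x}^{p(d+2)/2}([0,\infty))} = \|e^{it\Delta}\phi\|_{L_{t,x}^{p(d+2)/2}([\tnj,\infty))} \to 0$, modulo the symmetries which preserve this norm), and the small-data theory of Theorem~\ref{T:local} forces $S_{\ge 0}(u_n)$ bounded, contradicting hypothesis; the case $\tnj \to -\infty$ is symmetric using $S_{\le 0}$. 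Hence along a subsequence $\tnj \to \tau \in \R$, and then $e^{i\tnj\Delta}\phi \to e^{i\tau\Delta}\phi$ strongly in $\dot H^{s_c}_x$, so $u_n(0) \to e^{i\tau\Delta}\phi$ modulo the symmetries $\gnj$, which is precisely the claim. The main obstacle is the construction of the nonlinear profiles and the verification that $\unJ$ is a good approximate solution in the sense required by the stability theorems — this requires the full strength of Theorem~\ref{T:stab} (which is why the delicate nonlinear estimates of Section~\ref{S:Local theory}, and in particular the space $X(I)$ with its small fractional derivative count, were developed) together with a careful orthogonality analysis of the spacetime norms of the superposition; all of this is carried out exactly as in \cite{Berbec}, so I would simply point to that reference for the routine (if lengthy) details.
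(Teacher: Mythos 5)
Your proposal follows the same concentration-compactness outline as the paper, which itself defers almost entirely to \cite{Berbec}: profile decomposition of $u_n(t_n)$, nonlinear profiles, a Strichartz-orthogonality argument combined with stability to conclude there is a single profile exhausting the critical norm, and elimination of $\tnj\to\pm\infty$ via the small-data theory. That is indeed the right recipe. Two points are worth flagging.

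First, a small but genuine misstep: in your normalization you ``rescale and translate so that $N(t_n)=1$ and $x(t_n)=0$.'' At the level of Proposition~\ref{P:palais-smale} the solutions $u_n$ are arbitrary; there is no frequency-scale function $N$ or spatial center $x$ attached to them yet (those emerge only for the eventual minimal blowup solution in Theorem~\ref{T:reduct}). The only legitimate normalization is the time translation $t_n\mapsto 0$; the scaling and spatial translation are absorbed into the group elements $\gnj$ produced by the profile decomposition Lemma~\ref{L:cc}, not imposed beforehand.

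Second, and more substantively, you assert the verification that $\unJ$ is an approximate solution is ``carried out exactly as in \cite{Berbec}'' and is ``routine.'' The paper explicitly disclaims this: the one genuinely new difficulty is the decoupling of the nonlinear profiles, i.e.\ controlling the error $F\bigl(\sum_j f_j\bigr)-\sum_j F(f_j)$ at regularity $s_c$. In the energy-critical case of \cite{Berbec} this rests on the pointwise inequality
$\bigl|\nabla\bigl(\sum_{j}F(f_j)-F(\sum_{j}f_j)\bigr)\bigr|\lesssim_J\sum_{j\neq j'}|\nabla f_j|\,|f_{j'}|^{4/(d-2)}$,
which fails for non-integer $s_c$ and non-polynomial $F$. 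The paper supplies the substitute: applying the Strichartz square function $\mathcal{D}_{s_c-1}$ from \eqref{Ds def} to $\nabla\bigl[\sum_j F(f_j)-F(\sum_j f_j)\bigr]$ and invoking the pointwise estimate \eqref{E:pointwise} (proved in Lemma~\ref{L:frac deriv of diff}), one obtains a bound involving $\mathcal{D}_{s_c-1}(\nabla f_j)$, $\mathcal{D}_\sigma(f_l)$, and the Hardy--Littlewood maximal function, all of which commute with or behave correctly under the symmetry group. This is precisely why Lemma~\ref{L:frac deriv of diff} and the explicit inequality \eqref{E:pointwise} were developed in Section~\ref{S:Notations}; they are the non-routine ingredient your writeup leaves implicit.
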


The proof of this proposition follows the same recipe as that used to prove the analogous statement in \cite{Berbec}.
The main ingredients are the linear profile decomposition from Lemma~\ref{L:cc} and the stability results Theorems~\ref{T:stab cubic}
and~\ref{T:stab}.  The only new difficulty appears when one endeavors to prove decoupling of the nonlinear profiles.
In the energy-critical setting, one uses the pointwise inequality
$$
\Bigl|\nabla \Bigl(\sum_{j=1}^J F(f_j) - F\bigl(\sum_{j=1}^J f_j\bigr) \Bigr) \Bigr| \lesssim_J \sum_{j\neq j'} |\nabla f_j|
|f_{j'}|^{\frac{4}{d-2}},
$$
which does not carry over to the case of a non-integer number of derivatives.  Let us first discuss the case $0<p\leq 1$; in this scenario,
we use \eqref{E:pointwise} to obtain the following substitute for the inequality above:
\begin{align*}
\mathcal{D}_{s_c-1}&\biggl( \nabla \Bigl[\sum_{j=1}^J F(f_j) - F\bigl(\sum_{j=1}^J f_j\bigr) \Bigr] \biggr)\\
&\lesssim \mathcal{D}_{s_c-1}\biggl( \sum_{j=1}^J\nabla f_j \Bigl[ F'(f_j) - F'\bigl(\sum_{j'=1}^J f_{j'}\bigr) \Bigr] \biggr)\\
&\lesssim_J \sum_{j\neq j'} \biggl\{\mathcal{D}_{s_c-1}\bigl(\nabla f_j\bigr) |f_{j'}|^p
    + \biggl[\sum_{l=1}^J \mathcal D_\sigma(f_l)\biggr]^{\frac s\sigma} M\bigl(|\nabla f_j|^{\frac1{1-p}} \bigr)^{1-p}
        M\bigl(f_{j'}\bigr)^{p-\frac s\sigma}\biggr\}
\end{align*}
for some $s_c-1<\sigma p<p$.  We remind the reader that $M$ denotes the Hardy--Littlewood maximal function, which commutes with the symmetries
of the equation.  The operator $\mathcal D_s$ is defined in \eqref{Ds def} and it behaves like $|\nabla |^s$ under symmetries.

In the case discussed above, one has both a small-power non-polynomial nonlinearity and a non-integer number of derivatives $1<s_c<2$,
which makes it the most awkward of the scenarios we need to consider.  The remaining cases of Theorems~\ref{T:main cubic} and~\ref{T:main},
can be handled using various permutations of the techniques discussed above or some alternatives.  In particular, we draw the reader's attention
to \cite{kenig-merle:1/2} which considers the cubic nonlinearity with $s_c=\frac12$.

%
%
%
%

\section{The finite-time blowup solution}\label{S:self-similar}

In this section we preclude scenario I described in Theorem~\ref{T:enemies}.  We start by considering finite-time blowup solutions in the setting
of Theorem~\ref{T:main}.

\begin{theorem}[Absence of finite-time blowup solutions]
Let $d\geq 5$ and assume the critical regularity $s_c$ satisfies \eqref{sc}.  Then there are no finite-time blowup solutions to \eqref{nls}
in the sense of Theorem~\ref{T:enemies}.
\end{theorem}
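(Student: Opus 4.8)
The plan is to argue by contradiction: suppose $u:I\times\R^d\to\C$ is a finite-time blowup solution as in Theorem~\ref{T:enemies}, so that $u$ is almost periodic modulo symmetries, $S_I(u)=\infty$, and (say) $\sup I =: T^* <\infty$ (the case $|\inf I|<\infty$ being symmetric). The key point to exploit is that finiteness of the blowup time forces the frequency scale $N(t)$ to blow up: more precisely, I would first establish that $N(t)\to\infty$ as $t\nearrow T^*$, and in fact $N(t)\gtrsim_u (T^*-t)^{-1/2}$. This follows by a now-standard argument (see e.g.\ \cite{kenig-merle, tvz:cc}): if along some sequence $t_n\nearrow T^*$ one had $N(t_n)\lesssim (T^*-t_n)^{-1/2}$ failing, one could use almost periodicity together with the local theory (Theorem~\ref{T:local}) to extend the solution past $T^*$, contradicting maximality.

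The heart of the argument is then to show that this forces $u(t)\to 0$ in a suitable norm as $t\nearrow T^*$, and ultimately that $u\equiv 0$, contradicting $S_I(u)=\infty$. The vehicle is the Duhamel formula from the \emph{future}, Lemma~\ref{L:duhamel}: for any $t\in I$,
$$
u(t) = i\lim_{T\nearrow T^*} \int_t^T e^{i(t-t')\Delta} F(u(t'))\,dt',
$$
as a weak limit in $\dot H^{s_c}_x$. I would estimate $\||\nabla|^{s_c}u(t)\|_2$ (or, more effectively, a lower Sobolev norm such as $\|u(t)\|_{\dot H^{s}_x}$ for $0\le s<s_c$, or $\|u(t)\|_2$ once negative regularity is available — but since this theorem precedes Section~\ref{S:neg}, I expect the argument here to be self-contained and to use only $L^\infty_t\dot H^{s_c}_x$ boundedness plus Strichartz) by applying the Strichartz inequality of Lemma~\ref{L:Strichartz} and Corollary~\ref{C:s deriv} on the slab $[t,T^*)\times\R^d$:
$$
\bigl\||\nabla|^{s_c}u\bigr\|_{S^0([t,T^*))} \lesssim \bigl\||\nabla|^{s_c}F(u)\bigr\|_{N^0([t,T^*))} \lesssim \bigl\||\nabla|^{s_c}u\bigr\|_{S^0([t,T^*))}\,\|u\|_{L^{p(d+2)/2}_{t,x}([t,T^*))}^p.
$$
Here the crucial input is that $S_{[t,T^*)}(u)$ — equivalently $\|u\|_{L^{p(d+2)/2}_{t,x}([t,T^*))}^{p(d+2)/2}$ — is not small a priori (indeed it is infinite), so a naive bootstrap fails; instead one must localize in frequency. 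The standard fix is to split $u = P_{\le N}u + P_{>N}u$ and use Bernstein (Lemma~\ref{Bernstein}) together with almost periodicity (Remark~\ref{R:c small}) to show that on short time intervals near $T^*$ the relevant spacetime norm of the solution is concentrated at frequencies $\sim N(t)\to\infty$; combined with the scaling $N(t)\gtrsim (T^*-t)^{-1/2}$ and a careful bookkeeping of the time integration, one extracts a decay statement.

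Carrying out that frequency-localized analysis, one shows that $\|u(t)\|_{L^2_x}$ is finite and in fact \emph{constant} in $t$ on $I$ (by a variant of the local mass conservation / approximate conservation law, using that mass is formally conserved and the frequency-localized pieces at high frequency carry negligible mass), yet the high-frequency concentration together with $N(t)\to\infty$ forces $\|P_{\le N_0}u(t)\|_{L^2_x}\to 0$ for each fixed $N_0$ as $t\nearrow T^*$, while $\|P_{>N_0}u(t)\|_{L^2_x}$ is controlled by the compactness tail; hence $M(u)=\|u(t)\|_{L^2_x}^2 = 0$, so $u\equiv 0$. This contradicts $S_I(u)=\infty$.

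\medskip
\noindent\textbf{Main obstacle.} I expect the genuinely delicate step to be converting the blowup of $N(t)$ into a rigorous decay/vanishing statement for the solution \emph{at supercritical regularity} — that is, making the frequency-localized Duhamel/Strichartz bootstrap close without access to a conserved quantity at the critical scale. One has only $u\in L^\infty_t\dot H^{s_c}_x$, and $s_c>1$ means the nonlinearity $F(u)=|u|^pu$ must be differentiated a fractional ($\ge 1$) number of times, so all the estimates rely on the fractional chain rules of Lemmas~\ref{L:Fract chain rule}, \ref{L:FDFP} and the product rule, and the smoothness condition $s_c<1+p$ is essential. The bookkeeping near $t=T^*$, where the natural time-scale $(T^*-t)$ and frequency-scale $N(t)^{-2}$ must be matched against each other through the Strichartz exponents, is where one must be careful; this is precisely the place where the hypothesis $d\ge 5$ and \eqref{sc} will be used, paralleling the finite-time blowup arguments of \cite{Berbec, tvz:cc} but with the extra fractional-derivative technology from Section~\ref{S:Notations}.
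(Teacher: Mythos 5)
Your proposal is considerably more elaborate than the paper's actual argument, and its concluding step has a genuine gap. The paper does not invoke the blowup rate of $N(t)$, frequency localization, Bernstein, or any bootstrap of Strichartz norms on $[t,T)$. The insight you missed is to estimate the solution one derivative \emph{below} critical: applying the no-waste Duhamel formula into the future, the dual Strichartz estimate at the $L^2_t L^{2d/(d+2)}_x$ endpoint, H\"older in time (this is what produces the factor $(T-t)^{1/2}$), the fractional chain rule, and Sobolev embedding gives
\begin{equation*}
\bigl\||\nabla|^{s_c-1}u(t)\bigr\|_2 \lesssim (T-t)^{1/2}\,\bigl\||\nabla|^{s_c-1}u\bigr\|_{L_t^\infty L_x^{2d/(d-2)}}\,\|u\|_{L_t^\infty L_x^{dp/2}}^p \lesssim_u (T-t)^{1/2},
\end{equation*}
where both factors on the right are controlled purely in $L^\infty_t$ by Sobolev embedding from the a priori $L^\infty_t\dot H^{s_c}_x$ bound. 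Thus $\||\nabla|^{s_c-1}u(t)\|_2 \to 0$ as $t\nearrow T$; interpolating against the $\dot H^{s_c}_x$ bound (here one uses $1<s_c<2$ from \eqref{sc}) gives $\|\nabla u(t)\|_2\to 0$ and $\|u(t)\|_{L^{2+p}}\to 0$, hence $E(u(t))\to 0$, and energy conservation forces $u\equiv 0$. No smallness of spacetime norms is needed because the drop of one derivative already yields a Sobolev gain, and the $(T-t)^{1/2}$ factor from H\"older in time does the rest.

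Beyond being unnecessarily heavy, your endgame has a genuine flaw: you propose to deduce a contradiction from $M(u)=0$ via mass conservation, but a solution that is only known to lie in $L^\infty_t\dot H^{s_c}_x$ with $s_c>1$ need not have finite mass, so mass conservation is not available and ``$M(u)=0$'' is not a meaningful conclusion without first establishing $u\in L^2_x$. The paper proves such negative regularity only in Scenarios II and III (the global scenarios), where the double-Duhamel argument has infinite time to integrate over; it is \emph{not} established in the finite-time blowup scenario, and indeed the paper's Section~\ref{S:self-similar} deliberately avoids needing it. The conservation law that \emph{is} accessible in the finite-time case is the energy, precisely because the $\dot H^{s_c-1}_x\to 0$ decay from the Duhamel/Strichartz estimate interpolates with the $\dot H^{s_c}_x$ bound to place $u(t)$ in $\dot H^1_x$ with vanishing norm.
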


\begin{proof}
We argue by contradiction.  Assume that there exists a solution $u:I\times\R^d\to \C$ that is a finite-time blowup solution in the sense
of Theorem~\ref{T:enemies}.  Assume also, without loss of generality, that the solution $u$ blows up in finite time in the future, that is,
$T:=\sup I<\infty$.

By hypothesis and Sobolev embedding,
\begin{align*}
\|u \|_{L_t^\infty L_x^{\frac{dp}2}(I\times\R^d)}
\lesssim \bigl\||\nabla|^{s_c-1}u \bigr\|_{L_t^\infty L_x^{\frac{2d}{d-2}}(I\times\R^d)}
\lesssim \|u\|_{L_t^\infty \dot H^{s_c}_x(I\times\R^d)} \lesssim_u 1.
\end{align*}
Thus, using the Duhamel formula \eqref{Duhamel} into the future together with the Strichartz and H\"older inequalities, as well as the fractional
chain rule, we obtain
\begin{align*}
\bigl\||\nabla|^{s_c-1}u(t)\bigr\|_2
&\leq \Bigl\| \int_t^T e^{i(t-s)\Delta} |\nabla|^{s_c-1} F(u(s))\, ds\Bigr\|_2 \notag\\
&\lesssim \bigl\| |\nabla|^{s_c-1} F(u) \bigr\|_{L_t^2L_x^{\frac{2d}{d+2}}([t,T)\times\R^d)} \notag\\
&\lesssim (T-t)^{\frac12}\bigl\||\nabla|^{s_c-1}u \bigr\|_{L_t^\infty L_x^{\frac{2d}{d-2}}([t,T)\times\R^d)}
    \|u \|_{L_t^\infty L_x^{\frac{dp}2}([t,T)\times\R^d)}^p \notag\\
&\lesssim_u (T-t)^{\frac12}.
\end{align*}
Interpolating with $u\in L_t^\infty \dot H^{s_c}_x$ (recalling that $1<s_c<2$ by \eqref{sc}) we derive that the energy $E(u)\to 0$ as $t\to T$.
Invoking the conservation of energy, we deduce that $u\equiv 0$.  This contradicts the fact that $u$ is a blowup solution.
\end{proof}

We consider next finite-time blowup solutions in the setting of Theorem~\ref{T:main cubic}.

\begin{theorem}[Absence of finite-time blowup solutions -- the cubic]
Let $d\geq 5$ and assume $p=2$.  Then there are no finite-time blowup solutions to \eqref{nls} in the sense of Theorem~\ref{T:enemies}.
\end{theorem}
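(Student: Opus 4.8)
We argue by contradiction. Suppose $u:I\times\R^d\to\C$ is a finite-time blowup solution in the sense of Theorem~\ref{T:enemies}; by the time-reversal symmetry of \eqref{nls} we may assume $T:=\sup I<\infty$. Since $p=2$ we have $s_c=\tfrac{d-2}2$ and $\tfrac{dp}2=d$, so Sobolev embedding gives
$$
\|u\|_{L_t^\infty L_x^d(I\times\R^d)}\lesssim\bigl\||\nabla|^{s_c}u\bigr\|_{L_t^\infty L_x^2(I\times\R^d)}\lesssim_u 1 .
$$

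The plan is to propagate this bound to lower regularities and, exploiting the finiteness of $T$, to deduce that $u(t)\to0$ in $L^2_x$ as $t\nearrow T$. I would prove, by a finite induction, the existence of a strictly decreasing sequence $s_c=\sigma_0>\sigma_1>\dots>\sigma_K\le0$ such that for every $k$ one has $u(t)\in\dot H_x^{\sigma_k}$ (with $\||\nabla|^{\sigma_k}u(t)\|_2$ bounded on compact subintervals of $I$) and $\lim_{t\nearrow T}\bigl\||\nabla|^{\sigma_k}u(t)\bigr\|_2=0$. The base case $k=0$ is the hypothesis $u\in L_t^\infty\dot H_x^{s_c}$ (the limit being vacuous). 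For the inductive step I would combine the no-waste Duhamel formula \eqref{Duhamel} of Lemma~\ref{L:duhamel} with the elementary bound
$$
\bigl\||\nabla|^{\sigma_{k+1}}u(t)\bigr\|_2\le\int_t^T\bigl\||\nabla|^{\sigma_{k+1}}F(u(s))\bigr\|_2\,ds\le (T-t)\sup_{s\in[t,T)}\bigl\||\nabla|^{\sigma_{k+1}}F(u(s))\bigr\|_2 ,
$$
(justified by estimating the truncated integrals uniformly and passing to the weak limit), and estimate the integrand pointwise in $s$. Because $F(u)=|u|^2u$ is a cubic polynomial in $u$ and $\bar u$ — in particular $p=2$ is an even integer, so the fractional Leibniz rule applies to $F$ at every order, cf.\ the proof of Corollary~\ref{C:s deriv} — one can distribute the $\sigma_{k+1}$ derivatives among the three factors (or, when $\sigma_{k+1}<0$, apply $|\nabla|^{\sigma_{k+1}}$ to $F(u)$ directly via Hardy--Littlewood--Sobolev) and bound each resulting factor, through Sobolev embedding, by $\bigl\||\nabla|^{\tau}u(s)\bigr\|_2$ with $\tau$ in the already-controlled range $[\sigma_k,s_c]$. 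A balancing of the Hölder and Sobolev exponents shows that the spatial estimate closes with three such orders summing to $\sigma_{k+1}+d$, which is possible precisely for $\sigma_{k+1}$ in the nonempty interval $[\,3\sigma_k-d,\ 3s_c-d\,]$. Since $\sigma_k\le s_c<\tfrac d2$ one has $3\sigma_k-d<\sigma_k$, so at each step $\sigma_{k+1}$ may be taken strictly smaller than $\sigma_k$, and after finitely many steps one reaches $\sigma_K\le0$. (For $d=5,6$ a single step already produces $\bigl\||\nabla|^{s_c-2}u(t)\bigr\|_2\lesssim_u T-t$; for $d=7,8$ one reaches the energy space $\dot H^1_x$ and $L^4_x$, and conservation of energy may be used instead.)

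Granting the induction, interpolating $\dot H_x^{\sigma_K}$ (where the norm of $u(t)$ tends to $0$) with $\dot H_x^{s_c}$ (where it is $\lesssim_u1$) yields $\lim_{t\nearrow T}\|u(t)\|_2=0$, since $\sigma_K\le0<s_c$. As $s_c>1$, this gives $u(t)\in L^2_x\cap\dot H^{s_c}_x=H^{s_c}_x$ for $t$ near $T$; by persistence of regularity $u\in C^0_tL^2_x$ there, so mass is conserved and $\|u(t)\|_2$ is constant, hence identically $0$. Therefore $u\equiv0$, which contradicts $S_I(u)=\infty$.

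The step I expect to be the main obstacle is the inductive estimate on $\bigl\||\nabla|^{\sigma_{k+1}}F(u(s))\bigr\|_2$: one must verify at each stage that the Hölder and Sobolev exponents can be balanced so that this nonlinear bound closes while calling only on Sobolev norms of $u$ at orders $\tau\in[\sigma_k,s_c]$ already under control, and so that the new target regularity $\sigma_{k+1}$ is strictly below $\sigma_k$. The hypothesis $d\ge5$ (equivalently $s_c<\tfrac d2$) is precisely what keeps the Lebesgue exponents admissible and guarantees a definite gain at every step.
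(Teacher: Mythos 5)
Your argument is correct and follows the same overall route as the paper's: contradiction via the no-waste Duhamel formula (Lemma~\ref{L:duhamel}) into the future, extracting a positive power of $T-t$ from the finite time window, iterating to push the regularity of $u(t)$ below $s_c$, and finally invoking a conserved quantity to deduce $u\equiv0$.  There are two points at which you diverge from the paper.  First, you estimate the Duhamel integral directly in $L^1_tL^2_x$ via Minkowski's inequality, whereas the paper dualizes through Strichartz (placing $|\nabla|^{s_c-1}F(u)$ in $L^2_tL_x^{2d/(d+2)}$) and then H\"olders in time.  This changes the bookkeeping: the paper's target $L_x^{2d/(d+2)}$ gives the exponent constraint $\sum\tau_i=\sigma+(d-1)$, hence a gain of one derivative and a factor $(T-t)^{1/2}$ per step; your target $L_x^2$ gives $\sum\tau_i=\sigma+d$, hence up to two derivatives and $(T-t)$ per step.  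Both close, and your route is slightly more elementary since it avoids the Strichartz machinery.  Second, you always iterate down to $L^2_x$ and close with mass conservation, while the paper alternates between energy conservation (after reaching $\dot H^1_x$, for odd $d$) and mass conservation (for even $d$); your version removes that parity-dependent case split.  The one step you flag as the main obstacle — the exponent balance $\sum\tau_i=\sigma_{k+1}+d$ with $\tau_i\in[\sigma_k,s_c]$, and the descent interval $\sigma_{k+1}\in[3\sigma_k-d,3s_c-d]$ — is indeed correct, and for $\sigma_{k+1}<0$ the Hardy--Littlewood--Sobolev bound you propose closes since $\dot H^{s_c}_x\hookrightarrow L^d_x$ and the three copies of $u$ land in the required $L^r_x$.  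One small caveat worth recording: since $u(t)$ is only known a priori to lie in $\dot H^{s_c}_x$, the assertion $u(t)\in\dot H^{\sigma_{k+1}}_x$ with the stated bound must be obtained, as you indicate, by bounding the truncated Duhamel integrals in $\dot H^{\sigma_{k+1}}_x$ uniformly and passing to the weak limit, using that the weak $\dot H^{s_c}_x$ limit from Lemma~\ref{L:duhamel} identifies the distributional limit.
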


\begin{proof}
Again, we argue by contradiction.  Let $u:I\times\R^d\to \C$ be a finite-time blowup solution in the sense of Theorem~\ref{T:enemies} and
assume that $T:=\sup I<\infty$.

By Sobolev embedding and the hypothesis,
\begin{align*}
\|u \|_{L_t^\infty L_x^d(I\times\R^d)}
\lesssim \bigl\||\nabla|^{\frac{d-4}2}u \bigr\|_{L_t^\infty L_x^{\frac{2d}{d-2}}(I\times\R^d)}
\lesssim \|u\|_{L_t^\infty \dot H^{\frac{d-2}2}_x(I\times\R^d)} \lesssim_u 1.
\end{align*}
Thus, using the Duhamel formula \eqref{Duhamel} into the future together with the Strichartz and H\"older inequalities, as well as the fractional
chain rule, we obtain
\begin{align}\label{one less deriv}
\bigl\||\nabla|^{\frac{d-4}2}u(t)\bigr\|_2
&\leq \Bigl\| \int_t^T e^{i(t-s)\Delta} |\nabla|^{\frac{d-4}2} F(u(s))\, ds\Bigr\|_2 \notag\\
&\lesssim \bigl\| |\nabla|^{\frac{d-4}2} F(u) \bigr\|_{L_t^2L_x^{\frac{2d}{d+2}}([t,T)\times\R^d)} \notag\\
&\lesssim (T-t)^{\frac12}\bigl\||\nabla|^{\frac{d-4}2}u \bigr\|_{L_t^\infty L_x^{\frac{2d}{d-2}}([t,T)\times\R^d)}
    \|u \|_{L_t^\infty L_x^d([t,T)\times\R^d)}^2 \notag\\
&\lesssim_u (T-t)^{\frac12}.
\end{align}
In particular, $u(t)\in \dot H^{\frac{d-4}2}_x$.

In the case when $d=5$, one can interpolate between \eqref{one less deriv} and the hypothesis
$u\in L_t^\infty \dot H^{\frac{d-2}2}_x(I\times\R^d)$ to derive that $\|\nabla u(t)\|_2\to 0$ as $t\to T$, and hence, by Sobolev embedding,
the energy $E(u(t))\to 0$ as $t\to T$.  Using the conservation of energy, we deduce that $u\equiv 0$.  This contradicts the fact that for a
finite-time blowup solution, $S_I(u)=\infty$.

To handle higher dimensions, we iterate the computations in \eqref{one less deriv} with one less derivative to deduce that
$$
\bigl\||\nabla|^{\frac{d-6}2}u(t)\bigr\|_2\lesssim_u (T-t).
$$
For $d=6$ this immediately implies that $u$ must have zero mass, while for $d=7$, the argument used to handle dimension $d=5$ implies that $u$
must have zero energy.  In both cases, we derive a contradiction to the fact that $u$ is a blowup solution.

To derive a contradiction for dimensions $d\geq 8$, we iterate the argument presented above.

This finishes the proof of the theorem.
\end{proof}

%
%
%
%

\section{Negative regularity}\label{S:neg}

In this section we prove that in scenarios II and III described in Theorem~\ref{T:enemies}, the solution $u$ admits negative regularity;
more precisely, it lies in $L_t^\infty \dot H^{-\eps}_x$ for some $\eps>0$.  In particular, this shows that the solution decays sufficiently
rapidly (in space) to belong to $L_t^\infty L_x^2$.  We first consider the setting of Theorem~\ref{T:main}.
At the end of this section we explain the changes needed to prove negative regularity in the setting of Theorem~\ref{T:main cubic}.

\begin{theorem}[Negative regularity for scenarios II and III]\label{T:-reg}
Let $d\geq 5$ and assume the critical regularity $s_c$ obeys \eqref{sc}.  Let $u$ be a global solution to \eqref{nls}
that is almost periodic modulo symmetries.  Suppose also that
\begin{align}\label{Hs bounded}
\sup_{t\in I} \bigl\| |\nabla|^{s_c} u(t)\bigr\|_{L_x^2} <\infty
\end{align}
and
\begin{align}\label{inf bounded}
\inf_{t\in I} N(t)\geq 1.
\end{align}
Then $u\in L_t^\infty \dot H_x^{-\eps}$ for some $\eps=\eps(d)>0$.  In particular, $u\in L^\infty_t L^2_x$.
\end{theorem}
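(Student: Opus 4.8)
The plan is to run the two-step ``negative regularity'' scheme of \cite{Berbec}: first prove a \emph{breach of scaling}, namely that $u$ lies in Lebesgue spaces $L^q_x$ with $q$ strictly below the Sobolev exponent $\tfrac{dp}2$ attached to $\dot H^{s_c}_x$, and then upgrade this to genuine negative regularity $u\in\dot H^{-\eps}_x$ by a double Duhamel argument. The engine driving everything is the hypothesis $\inf_t N(t)\geq 1$: by Remark~\ref{R:c small} it gives, for every $\eta>0$, a uniform-in-$t$ bound $\bigl\||\nabla|^{s_c}P_{\leq c(\eta)}u(t)\bigr\|_{L^2_x}\lesssim\eta^{1/2}$, i.e.\ the solution carries essentially no low frequencies. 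This is \emph{not} enough by itself to put $u$ in $L^2_x$ --- restoring $|\nabla|^{-s_c}$ at low frequency loses a large power --- so the nonlinear interaction has to be exploited; but it does supply the small constant needed below.

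\textbf{Step 1 (breach of scaling).} I would show that $\sup_t\|P_N u(t)\|_{L^q_x}$ decays geometrically in $N$ as $N\to0$ for $q$ in an interval $\bigl(\tfrac{2d}{d-2},\tfrac{dp}2\bigr]$; equivalently $u\in L_t^\infty L^q_x$ for such $q$, which is the promised breach of scaling. Starting from the no-waste Duhamel formula \eqref{Duhamel} (into the future, say), apply $P_N$ and the dispersive estimate \eqref{dispersive-p} in $L^q_x$, so that $\|P_N u(t)\|_{L^q_x}\lesssim\int_t^\infty|t-t'|^{-d(\frac12-\frac1q)}\|P_NF(u(t'))\|_{L^{q'}_x}\,dt'$; the exponent $d(\tfrac12-\tfrac1q)>1$ makes this integrable at $t'\to\infty$, and $\|P_NF(u(t'))\|_{L^{q'}_x}$ is bounded uniformly in $t'$ by \eqref{Hs bounded} and Sobolev, so no spacetime bound on $u$ is needed. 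Now decompose $F(u)=|u|^pu$ according to whether the Littlewood--Paley pieces of the factors are $\lesssim M$ or $\gtrsim M$ (Lemma~\ref{Bernstein} and, where the nonlinearity is only H\"older, Lemma~\ref{L:Nonlin Bernstein}); the terms with at least one low-frequency factor contribute, after normalization, a self-referential estimate of the form
\begin{equation*}
x_k\leq b_k+\eta\sum_{l\geq 0}2^{-\gamma|k-l|}x_l,\qquad k\geq 0,
\end{equation*}
with $x_k$ a rescaled $\sup_t\|P_{2^{-k}}u(t)\|_{L^q_x}$, $b_k$ decaying geometrically (here energy-supercriticality $p>\tfrac4{d-2}$ provides the slack that makes the high-frequency contribution genuinely summable), $\gamma>0$ the Bernstein gain off the diagonal, and $\eta$ as small as we wish by the remark above. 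The acausal Gronwall inequality, Lemma~\ref{L:Gronwall}, then gives $x_k\lesssim\sum_{l\leq k}r^{|k-l|}b_l$ with $r\in(2^{-\gamma},1)$, hence geometric decay of $x_k$.

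\textbf{Step 2 (negative regularity).} Next I would convert this into $\dot H^{-\eps}_x$ via the double Duhamel trick. Fix a small dyadic $N$ and write the two copies of $P_Nu(t)$ in $\bigl\||\nabla|^{-\eps}P_Nu(t)\bigr\|_{L^2_x}^2$ using, respectively, the backward and forward Duhamel formulae of Lemma~\ref{L:duhamel}; by unitarity of $e^{it\Delta}$ this yields
\begin{equation*}
\bigl\||\nabla|^{-\eps}P_N u(t)\bigr\|_{L^2_x}^2\lesssim\int_{-\infty}^{t}\!\!\int_{t}^{\infty}\bigl|\bigl\langle e^{i(t_2-t_1)\Delta}|\nabla|^{-\eps}P_N F(u(t_1)),\ |\nabla|^{-\eps}P_N F(u(t_2))\bigr\rangle\bigr|\,dt_2\,dt_1.
\end{equation*}
Split the $(t_1,t_2)$-domain into $|t_1-t_2|\lesssim1$ and $|t_1-t_2|\gtrsim1$. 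On the far part, bound the inner product with the dispersive estimate \eqref{dispersive-p}, measuring $|\nabla|^{-\eps}P_NF(u)$ in $L^{r'}_x$ with $r>\tfrac{2d}{d-4}$, so that $d(\tfrac12-\tfrac1r)>2$ makes the double time integral converge while $F(u)=|u|^pu$ is controlled in $L^{r'}_x$, uniformly in time, by the integrability gained in Step~1; on the near part, use Cauchy--Schwarz and Bernstein, which yields a positive power of $N$ from $|\nabla|^{-\eps}P_N$ applied to $F(u)$. Summing $\sum_{N\lesssim1}N^{-2\eps}\sup_t\|P_Nu(t)\|_{L^2_x}^2<\infty$ (the frequencies $\gtrsim1$ being harmless by \eqref{Hs bounded}) gives $u\in L_t^\infty\dot H^{-\eps}_x$ for $\eps$ small, and interpolating with \eqref{Hs bounded} yields $u\in L_t^\infty L^2_x$. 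It is precisely the condition $r>\tfrac{2d}{d-4}$ --- equivalently $d\geq5$ --- that makes the double time integral converge.

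\textbf{Main obstacle.} The subtle part is the exponent bookkeeping, in two places. In Step~1 one must choose $q$, $M$, and the H\"older exponents so that all three hypotheses of Lemma~\ref{L:Gronwall} hold simultaneously: geometric decay of the source $b_k$ (which is exactly where the supercritical gap $p>\tfrac4{d-2}$ is spent), a strictly positive off-diagonal rate $\gamma$, and a coupling constant that can be made arbitrarily small (for which $\inf_tN(t)\geq1$ and almost periodicity are essential). In Step~2 one must fit the dispersion exponent $r$ into the window forced by $r>\tfrac{2d}{d-4}$ on one side and by the requirement that $(p+1)r'$ still lie in the Lebesgue range produced by Step~1 on the other, while keeping a net positive power of $N$ after summation; the near-diagonal part of the double integral must be peeled off and treated separately, as dispersion alone is too weak there. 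The remaining cases --- the cubic nonlinearity of Theorem~\ref{T:main cubic} --- are lighter, since the exact product and chain rules replace the fractional ones.
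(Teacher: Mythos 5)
Your two-step outline follows the paper's route: a Lebesgue-space breach of scaling via the no-waste Duhamel formula, Littlewood--Paley, and the acausal Gronwall inequality; then a double Duhamel argument (convergent precisely because $d\geq 5$ forces the dispersion exponent above $2$) to obtain negative regularity. But Step~2 as written does not close, for two reasons. First, the split of the $(t_1,t_2)$-integration must be at $|t_1-t_2|\sim N^{-2}$, not at $|t_1-t_2|\sim 1$. With the $N^{-2}$ split, the near part (Cauchy--Schwarz and Bernstein) and the far part (dispersion) both contribute the same power $N^{2(d/r-(d+4)/2)}$, where $|\nabla|^s F(u)$ is held in $L^\infty_t L^r_x$ with $r<\tfrac{2d}{d+4}$, and this is a positive power of $N$ that sums. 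With your split at $O(1)$, the far region $|t_1-t_2|\gtrsim 1$ gives a convergent $(t_1,t_2)$-integral that produces no power of $N$ at all, and the only $N$-dependence left, coming from $\bigl\||\nabla|^{-\eps}P_N F(u)\bigr\|_{L^{r'}_x}$, is the losing factor $\sim N^{-\eps}$; the resulting bound $\sim N^{-2\eps}$ fails to sum over dyadic $N\lesssim 1$.

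Second, even with the correct split, a one-shot estimate aimed directly at $|\nabla|^{-\eps}$ is not available. The gain per pass of the double Duhamel is $s_0 = d/r-(d+4)/2$, and the smallest $r<\tfrac{2d}{d+4}$ reachable from Step~1 (via the fractional chain and product rules applied to $F(u)$, together with the $L^q_x$ range given by the breach of scaling, as in \eqref{breach 2}) gives only a small $s_0$, well short of $s_c$. The paper therefore iterates: Proposition~\ref{P:some -reg} lowers the regularity level by a fixed $s_0>0$ per pass, while \eqref{breach 2} verifies at each new level $s$ that $|\nabla|^s u\in L^\infty_t L^2_x$ implies $|\nabla|^s F(u)\in L^\infty_t L^r_x$ for some $r<\tfrac{2d}{d+4}$. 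After finitely many passes one reaches $u\in L^\infty_t\dot H^{-\eps}_x$. Your proposed summation $\sum_N N^{-2\eps}\sup_t\|P_N u(t)\|_2^2<\infty$ presupposes $\|P_N u(t)\|_2\lesssim N^{\eps+}$ uniformly in $t$, which a single pass of the double Duhamel does not deliver. Step~1 itself is on the right track; the paper makes it precise with the rescaled quantity $A(N)$ and the recurrence of Lemma~\ref{L:recurrence}, using the smallness $\bigl\||\nabla|^{s_c}u_{\leq N_0}\bigr\|_{L^\infty_t L^2_x}\leq\eta$ supplied by $\inf_t N(t)\geq1$ and almost periodicity as the Gronwall coupling constant.
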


To prove Theorem~\ref{T:-reg}, we employ the argument used in \cite{Berbec} to treat the energy-critical case, $s_c=1$.
We achieve our goal in two steps:  First, we `break' scaling in a Lebesque space; more precisely, we
prove that our solution lives in $L^\infty_t L_x^q$ for some $2<q<\tfrac{dp}2$.  Next, we use a double Duhamel trick to
upgrade this to $u\in L_t^\infty \dot H_x^{s_c-s_0}$ for some $s_0=s_0(d,p,q)>0$.  Iterating the second step finitely many times, we
arrive at Theorem~\ref{T:-reg}.

Let $u$ be a solution to \eqref{nls} that obeys the hypotheses of Theorem~\ref{T:-reg}.  Let $\eta>0$ be a small constant to be
chosen later.  Then by Remark~\ref{R:c small} combined with \eqref{inf bounded}, there exists $N_0=N_0(\eta)$ such that
\begin{align}\label{Hs small}
\bigl\| |\nabla|^{s_c} u_{\leq N_0}\bigr\|_{L_t^\infty L_x^2}\leq \eta.
\end{align}

We turn now to our first step, that is, breaking scaling in a Lebesgue space.  To this end, we define
\begin{equation*}
A(N):=
\begin{cases}
N^{\frac{d-3}2-\frac2p} \sup_{t\in \R} \|u_N(t)\|_{L_x^{\frac{2d}{d-3}}} \quad & \text{for } d=5,6\\
N^{\frac{d-2}2-\frac2p-\frac p2} \sup_{t\in \R} \|u_N(t)\|_{L_x^{\frac{2d}{d-2-p}}} \quad & \text{for } d\geq 7
\end{cases}
\end{equation*}
for frequencies $N\leq 10N_0$.  To simplify the formulas appearing below, we introduce the notation
\begin{align*}
\alpha(d):=
\begin{cases}
-\frac{d-3}2+\frac2p \quad & \text{for} \quad d=5,6\\
-\frac{d-2}2+\frac2p+\frac p2 \quad & \text{for} \quad d\geq 7.
\end{cases}
\end{align*}
Note that by \eqref{sc}, $0<\alpha(d)<1$ for $d=5,6$ and $0<\alpha(d)<p$ for $d\geq 7$.
Note also that by Bernstein's inequality and \eqref{Hs bounded},
\begin{align}\label{A bdd}
A(N)\lesssim N^{\frac{d}2-\frac2p} \|u_N\|_{L_t^\infty L_x^2}
\lesssim \bigl\| |\nabla|^{s_c} u\bigr\|_{L_t^\infty L_x^2} <\infty.
\end{align}

We next prove a recurrence formula for $A(N$).

\begin{lemma}[Recurrence]\label{L:recurrence}
For all $N\leq 10 N_0$,
\begin{align*}
A(N)
&\lesssim_u \bigl(\tfrac{N}{N_0}\bigr)^{\min\{1,p\}-\alpha(d)}
    + \eta^p \!\!\! \sum_{\frac{N}{10}\leq N_1\leq N_0} \bigl(\tfrac{N}{N_1}\bigr)^{\min\{1,p\}-\alpha(d)}A(N_1)
    +\eta^p \!\!\! \sum_{N_1<\frac{N}{10}} \bigl(\tfrac{N_1}{N}\bigr)^{\alpha(d)}A(N_1).
\end{align*}
\end{lemma}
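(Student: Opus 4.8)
The plan is to follow the argument used for the analogous recurrence in \cite{Berbec}, adjusting for the fact that we now have a fractional number of derivatives and, when $d\ge7$, a non-polynomial nonlinearity. By \eqref{inf bounded} and Theorem~\ref{T:enemies} the solution is global, so I would start from the ``no-waste'' Duhamel formula \eqref{Duhamel} integrated towards $t=-\infty$ and localize it in frequency: for every $t$,
$$u_N(t)=-i\int_{-\infty}^t e^{i(t-t')\Delta}P_NF(u(t'))\,dt',$$
the integral being interpreted as a weak limit in $\dot H^{s_c}_x$. Writing $r$ for the Lebesgue exponent occurring in the definition of $A(N)$ (thus $r=\tfrac{2d}{d-3}$ when $d=5,6$ and $r=\tfrac{2d}{d-2-p}$ when $d\ge7$), I would estimate $\|u_N(t)\|_{L^r_x}$ by combining the dispersive estimate \eqref{dispersive-p} with Bernstein's inequality: one has $\|e^{i\tau\Delta}P_Ng\|_{L^r_x}\lesssim|\tau|^{-d(\frac12-\frac1r)}\|P_Ng\|_{L^{r'}_x}$ directly from \eqref{dispersive-p}, and also $\|e^{i\tau\Delta}P_Ng\|_{L^r_x}\lesssim N^{2d(\frac12-\frac1r)}\|P_Ng\|_{L^{r'}_x}$ via Bernstein and the unitarity of $e^{i\tau\Delta}$ on $L^2_x$. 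Since \eqref{sc} guarantees $d(\tfrac12-\tfrac1r)>1$, integrating the smaller of these two kernels over $\tau\in(-\infty,t]$ (the crossover occurring at $|\tau|\sim N^{-2}$) converges; a short computation --- using that \eqref{sc} forces $p>1$ when $d=5,6$ and $p<1$ when $d\ge7$ --- shows $2d(\tfrac12-\tfrac1r)-2=\min\{1,p\}$, so that
$$\|u_N(t)\|_{L^r_x}\lesssim N^{\min\{1,p\}}\sup_{t'\in\R}\|P_NF(u(t'))\|_{L^{r'}_x}.$$

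The next and principal step is to estimate $\|P_NF(u)\|_{L^{r'}_x}$ and feed the result back into the quantities $A(\cdot)$. I would Littlewood--Paley decompose the input of the nonlinearity and organize $P_NF(u)$ according to where the output frequency $\sim N$ is produced. First, using $F(z)=|z|^pz$ and the pointwise bound $|F(u)-F(u_{\le N_0})|\lesssim|u_{>N_0}|\bigl(|u|+|u_{>N_0}|\bigr)^p$, the contribution of the high frequencies $u_{>N_0}$ can be placed, via H\"older's and Bernstein's inequalities, the Sobolev embedding $\dot H^{s_c}_x\hookrightarrow L^{dp/2}_x$ and \eqref{Hs bounded}, into the form $\lesssim_u N_0^{\alpha(d)-\min\{1,p\}}$; after multiplication by $N^{\min\{1,p\}-\alpha(d)}$ this is precisely the first term $(N/N_0)^{\min\{1,p\}-\alpha(d)}$. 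In every remaining term each input has frequency $\le N_0$, so each spectator factor contributes a power of $\eta$ through Sobolev embedding, interpolation and \eqref{Hs small}; this is the source of the $\eta^p$. The terms in which a single input of frequency $N_1\in[\tfrac N{10},N_0]$ carries the output frequency give, after H\"older and Bernstein on the remaining factors together with \eqref{A bdd}, the middle sum $\eta^p\sum_{N/10\le N_1\le N_0}(N/N_1)^{\min\{1,p\}-\alpha(d)}A(N_1)$; the terms in which the output frequency is generated from inputs all of frequency $<\tfrac N{10}$ I would treat with the Nonlinear Bernstein estimate, Lemma~\ref{L:Nonlin Bernstein} (and, for $p<1$, the pointwise inequality \eqref{E:pointwise} together with the $\mathcal D_\sigma$-bounds of Lemma~\ref{L:frac deriv of diff}), which supplies a gain $N^{-\min\{1,p\}}$ in the output frequency; tracking the powers of $N_1$ produced by the derivatives that land on the low-frequency inputs, and summing, yields the last sum $\eta^p\sum_{N_1<N/10}(N_1/N)^{\alpha(d)}A(N_1)$. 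Inserting these three bounds into the estimate of the first paragraph, multiplying through by $N^{-\alpha(d)}$ and taking the supremum over $t$ gives the asserted recurrence.

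I expect the hardest part to be the last, genuinely nonlinear piece of the frequency decomposition: because $1<s_c<2$ and, for $d\ge7$, $p<1$, there is no integer-order chain rule with which to isolate the frequency-$\sim N$ output generated from strictly lower input frequencies, which is precisely why Lemma~\ref{L:Nonlin Bernstein} and, for small $p$, the $\mathcal D_\sigma$-machinery of Lemma~\ref{L:frac deriv of diff} are needed; what then remains is careful bookkeeping --- arranging the H\"older and Bernstein exponents so that the $p$ spectator factors contribute precisely $\eta^p$ and the surviving powers of $N$, $N_1$ and $N_0$ combine into exactly $\min\{1,p\}-\alpha(d)$ and $\alpha(d)$.
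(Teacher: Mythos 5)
Your proposal is correct and follows essentially the paper's proof: no-waste Duhamel split at $|\tau|\sim N^{-2}$ with the dispersive and Bernstein estimates giving the $N^{\min\{1,p\}}$ gain, then a frequency decomposition of $F(u)$ into pieces containing $u_{>N_0}$, $F(u_{[N/10,N_0]})$, and the cross terms with $u_{<N/10}$, with Lemma~\ref{L:Nonlin Bernstein} handling the low-frequency piece. Two small remarks: the paper carves out the $u_{<N/10}$ contribution via the Fundamental Theorem of Calculus (the last two terms of \eqref{decomposition}), which is the precise mechanism behind the bookkeeping you describe for non-polynomial $F$, and it does not need \eqref{E:pointwise} or Lemma~\ref{L:frac deriv of diff} at this stage --- Nonlinear Bernstein together with H\"older, interpolation and \eqref{Hs small} suffice.
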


\begin{proof}
We first give the proof in dimensions $d\geq 7$.  Once this is completed, we will explain the changes necessary to treat $d=5,6$.

Fix $N\leq 10 N_0$.  By time-translation symmetry, it suffices to prove
\begin{align}\label{rec goal}
N^{-\alpha(d)} \| u_N(0)\|_{L_x^{\frac{2d}{d-2-p}}} &\lesssim_u \bigl(\tfrac{N}{N_0}\bigr)^{p-\alpha(d)}
+ \eta^p \sum_{\frac{N}{10}\leq N_1\leq N_0} \bigl(\tfrac{N}{N_1}\bigr)^{p-\alpha(d)}A(N_1) \notag\\
&\quad + \eta^p \sum_{N_1<\frac{N}{10}} \bigl(\tfrac{N_1}{N}\bigr)^{\alpha(d)}A(N_1).
\end{align}

Using the Duhamel formula \eqref{Duhamel} into the future followed by the triangle inequality, Bernstein, and the dispersive
inequality, we estimate
\begin{align}\label{to estimate}
N^{-\alpha(d)} \| u_N(0)\|_{L_x^{\frac{2d}{d-2-p}}}
&\leq N^{-\alpha(d)} \int_0^{N^{-2}} \bigl\|e^{-it\Delta} P_N F(u(t))\bigr\|_{L_x^{\frac{2d}{d-2-p}}} \, dt \notag\\
&\quad + N^{-\alpha(d)}  \int_{N^{-2}}^{\infty}\bigl\|e^{-it\Delta}  P_N F(u(t))\bigr\|_{L_x^{\frac{2d}{d-2-p}}} \, dt \notag \\
&\lesssim N^{\frac p2 +1 -\alpha(d)} \int_0^{N^{-2}} \bigl\| e^{-it\Delta}  P_N F(u(t))\bigr\|_{L_x^2} \, dt  \notag \\
&\quad + N^{-\alpha(d)} \| P_N F(u)\|_{L_t^\infty L_x^{\frac{2d}{d+2+p}}} \int_{N^{-2}}^\infty t^{-\frac{2+p}2}\, dt  \notag \\
&\lesssim N^{\frac p2-1-\alpha(d)} \| P_N F(u)\|_{L_t^\infty L_x^2} + N^{p-\alpha(d)} \|  P_N  F(u)\|_{L_t^\infty L_x^{\frac{2d}{d+2+p}}}\notag\\
&\lesssim N^{p-\alpha(d)} \|  P_N  F(u)\|_{L_t^\infty L_x^{\frac{2d}{d+2+p}}}.
\end{align}

Using the Fundamental Theorem of Calculus, we decompose
\begin{align}\label{decomposition}
F(u)&= O(|u_{>N_0}| |u_{\leq N_0}|^p) + O(|u_{>N_0}|^{1+p}) + F(u_{\frac N{10}\leq \cdot\leq N_0})\notag \\
&\quad + u_{<\frac N{10}} \int_0^1 F_z \bigl( u_{\frac N{10}\leq \cdot\leq N_0} + \theta u_{<\frac N{10}}\bigr)\, d\theta  \\
&\quad + \overline{u_{<\frac N{10}}} \int_0^1 F_{\bar z} \bigl( u_{\frac N{10}\leq \cdot\leq N_0} + \theta u_{<\frac N{10}}\bigr)\, d\theta. \notag
\end{align}

The contribution to the right-hand side of \eqref{to estimate} coming from terms that contain at least one copy of $u_{> N_0}$
can be estimated in the following manner:  Using H\"older, Bernstein, Sobolev embedding, and \eqref{Hs bounded},
\begin{align}\label{1}
N^{p-\alpha(d)} \| P_N O(|u_{>N_0}| |u|^p) \bigr\|_{L_t^\infty L_x^{\frac{2d}{d+2+p}}}
&\lesssim N^{p-\alpha(d)}\|u\|_{L_t^\infty L_x^{\frac{dp}2}}^p \|u_{>N_0}\|_{L_t^\infty L_x^{\frac{2d}{d-2+p}}} \notag\\
&\lesssim_u N^{p-\alpha(d)} N_0^{\alpha(d)-p}.
\end{align}
Thus, this contribution is acceptable.

Next we turn to the contribution to the right-hand side of \eqref{to estimate} coming from the last two terms in
\eqref{decomposition}; it suffices to consider the first of them since similar arguments can be used to deal with the second.

First we note that as $|\nabla|^{s_c} u \in L_t^\infty L_x^2$, by Sobolev embedding we must have $\nabla u \in L_t^\infty L_x^{\frac{dp}{2+p}}$.
Thus, an application of Lemma~\ref{L:Nonlin Bernstein} together with \eqref{Hs bounded} yield
$$
\bigl\|P_{>\frac N{10}}F_z(u)\bigr\|_{L_t^\infty L_x^{\frac d{2+p}}}
\lesssim N^{-p} \|\nabla u\|_{L_t^\infty L_x^{\frac{dp}{2+p}}}^p
\lesssim N^{-p} \bigl\| |\nabla|^{s_c}u\bigr\|_{L_t^\infty L_x^2}^p
\lesssim_u N^{-p}.
$$
Thus, by H\"older's inequality and \eqref{Hs small},
\begin{align}\label{2}
N^{p-\alpha(d)}&\Bigl\| P_N\Bigl( u_{<\frac N{10}}  \int_0^1 F_z \bigl( u_{\frac N{10}\leq \cdot\leq N_0}
            + \theta u_{<\frac N{10}}\bigr)\, d\theta\Bigr) \Bigr\|_{L_t^\infty L_x^{\frac{2d}{d+2+p}}}\notag\\
&\lesssim N^{p-\alpha(d)}\|u_{<\frac N{10}}\|_{L_t^\infty L_x^{\frac{2d}{d-2-p}}}
        \Bigl\| P_{>\frac N{10}}\Bigl(\int_0^1 F_z \bigl( u_{\frac N{10}\leq \cdot\leq N_0}+ \theta u_{<\frac N{10}}\bigl)\, d\theta\Bigr)\Bigr\|_{L_t^\infty L_x^{\frac d{2+p}}}\notag\\
&\lesssim N^{-\alpha(d)} \|u_{<\frac N{10}}\|_{L_t^\infty L_x^{\frac{2d}{d-2-p}}}\bigl\| |\nabla|^{s_c}u_{\leq N_0}\bigr\|_{L_t^\infty L_x^2}^p\notag\\
&\lesssim_u \eta^p  \sum_{N_1<\frac{N}{10}} \bigl(\tfrac{N_1}{N}\bigr)^{\alpha(d)}A(N_1).
\end{align}
Hence, the contribution coming from the last two terms in \eqref{decomposition} is acceptable.

We are left to estimate the contribution of $F(u_{\frac N{10}\leq \cdot\leq N_0})$ to the right-hand side of \eqref{to
estimate}. We need only show
\begin{align}\label{last goal}
\|F(u_{\frac N{10}\leq \cdot\leq N_0})\|_{L_t^\infty L_x^{\frac{2d}{d+2+p}}}
\lesssim_u \eta^p \sum_{\frac{N}{10} \leq N_1\leq N_0} N_1^{\alpha(d)-p}A(N_1).
\end{align}
As $d\geq 7$ and the critical regularity $s_c$ satisfies \eqref{sc}, we must have $p<1$.  Using the triangle inequality, Bernstein, \eqref{Hs small}, and H\"older, we
estimate
\begin{align*}
\| F(u_{\frac N{10}\leq \cdot\leq N_0}& )\|_{L_t^\infty L_x^{\frac{2d}{d+2+p}}}\\
&\lesssim \sum_{\frac N{10}\leq N_1 \leq N_0} \bigl\|u_{N_1} |u_{\frac N{10}\leq \cdot\leq N_0}|^p\bigr\|_{L_t^\infty L_x^{\frac{2d}{d+2+p}}}\\
&\lesssim \sum_{\frac N{10}\leq N_1, N_2 \leq N_0} \bigl\|u_{N_1} |u_{N_2}|^p\bigr\|_{L_t^\infty L_x^{\frac{2d}{d+2+p}}}\\
&\lesssim \sum_{\frac N{10}\leq N_1\leq N_2 \leq N_0} \|u_{N_1}\|_{L_t^\infty L_x^{\frac{2d}{d-2-p}}} \|u_{N_2}\|_{L_t^\infty L_x^{\frac{dp}{2+p}}}^p\\
&\quad + \sum_{\frac N{10}\leq N_2\leq N_1 \leq N_0} \|u_{N_1}\|_{L_t^\infty L_x^{\frac{dp}{2+p}}}^p
           \|u_{N_1}\|_{L_t^\infty L_x^{\frac{2d}{d-2-p}}}^{1-p} \|u_{N_2}\|_{L_t^\infty L_x^{\frac{2d}{d-2-p}}}^{p}\\
&\lesssim \sum_{\frac N{10}\leq N_1\leq N_2 \leq N_0} \|u_{N_1}\|_{L_t^\infty L_x^{\frac{2d}{d-2-p}}} N_2^{-p}\bigl\| |\nabla|^{s_c}u_{N_2}\bigr\|_{L_t^\infty L_x^2}^p\\
&\quad + \sum_{\frac N{10}\leq N_2\leq N_1 \leq N_0}N_1^{-p}\bigl\| |\nabla|^{s_c}u_{N_1}\bigr\|_{L_t^\infty L_x^2}^p
            \|u_{N_1}\|_{L_t^\infty L_x^{\frac{2d}{d-2-p}}}^{1-p} \|u_{N_2}\|_{L_t^\infty L_x^{\frac{2d}{d-2-p}}}^{p}\\
&\lesssim_u \eta^p \sum_{\frac N{10}\leq N_1\leq N_0} N_1^{\alpha(d)-p}A(N_1)\\
&\quad + \eta^p \sum_{\frac N{10}\leq N_2\leq N_1 \leq N_0} \bigl(\tfrac{N_2}{N_1} \bigr)^{p^2}
            \bigl[N_1^{\alpha(d)-p}A(N_1)\bigr]^{1-p} \bigl[N_2^{\alpha(d)-p}A(N_2)\bigr]^p\\
&\lesssim_u \eta^p \sum_{\frac N{10}\leq N_1\leq N_0} N_1^{\alpha(d)-p}A(N_1).
\end{align*}
This proves \eqref{last goal} and so completes the proof of the lemma in dimensions $d\geq 7$.

Consider now $d=5,6$.  Note that in this case, our assumptions guarantee that $1<p<2$. Arguing as for \eqref{to estimate}, we have
$$
N^{-\alpha(d)} \|u_N(0)\|_{L_x^{\frac{2d}{d-3}}}\lesssim N^{1-\alpha(d)} \|P_N F(u)\|_{L_t^\infty L_x^{\frac{2d}{d+3}}},
$$
which we estimate by decomposing the nonlinearity as in \eqref{decomposition}.  The analogue of \eqref{1} in this case is
\begin{align*}
N^{1-\alpha(d)} \| P_N O(|u_{>N_0}| |u|^p) \bigr\|_{L_t^\infty L_x^{\frac{2d}{d+3}}}
&\lesssim N^{1-\alpha(d)} \|u\|_{L_t^\infty L_x^{\frac{dp}2}}^p \|u_{>N_0}\|_{L_t^\infty L_x^{\frac{2d}{d-1}}}
\lesssim_u \bigl(\tfrac{N}{N_0}\bigr)^{1-\alpha(d)}.
\end{align*}
Using Bernstein and Lemma~\ref{L:Fract chain rule} together with \eqref{Hs small} and Sobolev embedding, we replace \eqref{2} by
\begin{align*}
N^{1-\alpha(d)} \Bigl\| & P_N\Bigl( u_{<\frac N{10}}  \int_0^1 F_z \bigl( u_{\frac N{10}\leq \cdot\leq N_0}
        + \theta u_{<\frac N{10}}\bigr)\, d\theta\Bigr) \Bigr\|_{L_t^\infty L_x^{\frac{2d}{d+3}}}\\
&\lesssim N^{1-\alpha(d)}\|u_{<\frac N{10}}\|_{L_t^\infty L_x^{\frac{2d}{d-3}}}
        \Bigl\| P_{>\frac N{10}}\Bigl(\int_0^1 F_z \bigl( u_{\frac N{10}\leq \cdot\leq N_0}+ \theta u_{<\frac N{10}}\bigl)\, d\theta\Bigr)\Bigr\|_{L_t^\infty L_x^{\frac d3}}\\
&\lesssim N^{-\alpha(d)} \|u_{<\frac N{10}}\|_{L_t^\infty L_x^{\frac{2d}{d-3}}} \|\nabla u_{\leq N_0}\|_{L_t^\infty L_x^{\frac{dp}{2+p}}}
        \|u_{\leq N_0}\|_{L_t^\infty L_x^{\frac{dp}2}}^{p-1} \\
&\lesssim \sum_{N_1<\frac{N}{10}} \bigl(\tfrac{N_1}{N}\bigr)^{\alpha(d)}A(N_1)\bigl\| |\nabla|^{s_c}u_{\leq N_0}\bigr\|_{L_t^\infty L_x^2}^p\\
&\lesssim_u \eta^p \sum_{N_1<\frac{N}{10}} \bigl(\tfrac{N_1}{N}\bigr)^{\alpha(d)}A(N_1).
\end{align*}
Finally, arguing as for \eqref{last goal}, we estimate
\begin{align*}
\|F&( u_{\frac N{10}\leq \cdot\leq N_0})\|_{L_t^\infty L_x^{\frac{2d}{d+3}}}\\
&\lesssim \sum_{\frac N{10}\leq N_1, N_2 \leq N_0}
    \bigl\|u_{N_1} u_{N_2}|u_{\frac N{10}\leq \cdot\leq N_0}|^{p-1}\bigr\|_{L_t^\infty L_x^{\frac{2d}{d+3}}}\\
&\lesssim \sum_{\frac N{10}\leq N_1\leq N_2, N_3 \leq N_0} \|u_{N_1}\|_{L_t^\infty L_x^{\frac{2d}{d-3}}}
    \|u_{N_2}\|_{L_t^\infty L_x^{\frac{dp}3}}\|u_{N_3}\|_{L_t^\infty L_x^{\frac{dp}3}}^{p-1}\\
&\qquad + \sum_{\frac N{10}\leq N_3\leq N_1\leq N_2 \leq N_0} \|u_{N_1}\|_{L_t^\infty L_x^{\frac{2d}{d-3}}}^{2-p}
    \|u_{N_1}\|_{L_t^\infty L_x^{\frac{dp}3}}^{p-1}\|u_{N_2}\|_{L_t^\infty L_x^{\frac{dp}3}}\|u_{N_3}\|_{L_t^\infty L_x^{\frac{2d}{d-3}}}^{p-1}\\
&\lesssim_u \sum_{\frac N{10}\leq N_1\leq N_2, N_3 \leq N_0} \|u_{N_1}\|_{L_t^\infty L_x^{\frac{2d}{d-3}}}
    \eta N_2^{-\frac1p}\bigl(\eta N_3^{-\frac1p}\bigr)^{p-1}\\
&\qquad + \sum_{\frac N{10}\leq N_3\leq N_1\leq N_2 \leq N_0} \|u_{N_1}\|_{L_t^\infty L_x^{\frac{2d}{d-3}}}^{2-p} \bigl(\eta N_1^{-\frac 1p}\bigr)^{p-1}
    \eta N_2^{-\frac1p} \|u_{N_3}\|_{L_t^\infty L_x^{\frac{2d}{d-3}}}^{p-1}\\
&\lesssim_u \eta^p \sum_{\frac N{10}\leq N_1 \leq N_0} N_1^{\alpha(d)-1} A(N_1)\\
&\qquad + \eta^p \sum_{\frac N{10}\leq N_3\leq N_1 \leq N_0} \bigl( \tfrac{N_3}{N_1}\bigr)^{p-1}
            \bigl[N_1^{\alpha(d)-1}A(N_1)\bigr]^{2-p} \bigl[N_3^{\alpha(d)-1}A(N_3)\bigr]^{p-1}\\
&\lesssim_u \eta^p \sum_{\frac N{10}\leq N_1 \leq N_0} N_1^{\alpha(d)-1} A(N_1).
\end{align*}
Putting everything together completes the proof of the lemma when $d=5,6$.
\end{proof}

This lemma leads very quickly to our first goal:

\begin{proposition}[$L^p$ breach of scaling]\label{P:L^p breach}
Let $u$ be as in Theorem~\ref{T:-reg}.  Then
\begin{align}\label{step 1}
u\in L_t^\infty L_x^q \quad \text{for some} \quad 2<q<\tfrac{dp}2.
\end{align}
In particular, for $0\leq s\leq s_c$,
\begin{align}\label{breach 2}
|\nabla|^s u\in L_t^\infty L_x^2 \quad \implies \quad |\nabla|^{s} F(u) \in L_t^\infty L_x^r \quad \text{for some} \quad r<\tfrac{2d}{d+4}.
\end{align}
\end{proposition}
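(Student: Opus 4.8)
The plan is to deduce the $L^p$ breach of scaling \eqref{step 1} from the recurrence of Lemma~\ref{L:recurrence} by feeding it into the acausal Gronwall inequality, Lemma~\ref{L:Gronwall}, and then to read off \eqref{breach 2} from \eqref{step 1} and the fractional chain rule. Here $\alpha(d)$ and $A(N)$ are as defined just before Lemma~\ref{L:recurrence}, and we recall from \eqref{sc} that $0<\alpha(d)<\min\{1,p\}$, so that $\gamma_1:=\min\{1,p\}-\alpha(d)>0$.

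For the first step I would set $N_k:=2^{-k}N_0$, $x_k:=A(N_k)$ ($k\ge0$), and observe that Lemma~\ref{L:recurrence} takes the form
\[
x_k\ \lesssim_u\ 2^{-\gamma_1 k}\ +\ C\eta^p\sum_{l\ge0}2^{-\gamma|k-l|}x_l,\qquad k\ge0,
\]
with $\gamma:=\min\{\gamma_1,\alpha(d)\}>0$: the first sum in Lemma~\ref{L:recurrence} contributes the diagonal plus $O(1)$ higher-frequency terms with weights $\lesssim 2^{-\gamma_1|k-l|}$ (bounded weights for the $O(1)$ indices $l>k$), and the second sum the tail $l>k$ with weight $2^{-\alpha(d)|k-l|}$. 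Crucially, $C$ and the implicit constant depend on $u$ only through \eqref{Hs bounded}, not on $\eta$ (the cutoff $N_0$ depends on $\eta$ but only enters through ratios $N/N_0$). Since $\{x_k\}$ is bounded by \eqref{A bdd}, choosing $\eta$ so small that $C\eta^p<\tfrac12(1-2^{-\gamma})$ puts us in the setting of Lemma~\ref{L:Gronwall}, which yields $x_k\lesssim_u\sum_{l=0}^k r^{k-l}2^{-\gamma_1 l}$ for some $r\in(2^{-\gamma},1)$; summing this geometric series gives $A(N)\lesssim_u(N/N_0)^\theta$ for all dyadic $N\le N_0$ and some $\theta>0$. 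By the definitions of $A$ and $\alpha(d)$ this means $\|u_N\|_{L^\infty_t L^{q_0}_x}\lesssim_u N^{\alpha(d)+\theta}$ for $N\le N_0$, where $q_0:=\tfrac{2d}{d-3}$ if $d=5,6$ and $q_0:=\tfrac{2d}{d-2-p}$ if $d\ge7$ (both exceeding $2$).

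To finish \eqref{step 1} I would interpolate this bound against the elementary Bernstein bound $\|u_N\|_{L^\infty_t L^2_x}\lesssim_u N^{-s_c}$ (valid for every $N$ by \eqref{Hs bounded}): for $q\in(2,q_0)$ with $\tfrac1q=\tfrac{1-\beta}2+\tfrac\beta{q_0}$, the interpolated low-frequency bound carries the power $N^{-s_c(1-\beta)+(\alpha(d)+\theta)\beta}$, which is square-summable over $N\le N_0$ once $\beta>\tfrac{s_c}{s_c+\alpha(d)+\theta}$, while on high frequencies $N>N_0$ Bernstein and the same $L^2_x$ bound give $\|u_N\|_{L^\infty_t L^q_x}\lesssim_u N^{\frac d2-\frac dq-s_c}$, square-summable over $N>N_0$ precisely when $q<\tfrac{dp}2$. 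A short computation shows that the two requirements on $\beta$ are compatible for any $\theta>0$ — they would just fail to overlap at $\theta=0$, which is exactly why the recurrence gain is needed — so one may fix $q\in(2,\tfrac{dp}2)$ accordingly; a Littlewood--Paley square-function estimate then upgrades these frequency bounds to $u=u_{\le N_0}+u_{>N_0}\in L^\infty_t L^q_x$, i.e. \eqref{step 1}. For \eqref{breach 2}, given $s\in[0,s_c]$ with $|\nabla|^s u\in L^\infty_t L^2_x$, combine \eqref{step 1} with $\dot H^{s_c}_x\hookrightarrow L^{dp/2}_x$ and interpolation to get $u\in L^\infty_t L^{q_1}_x$ for any $q_1\in[q,\tfrac{dp}2)$; choosing such a $q_1$ with also $q_1>2p$ (possible since $\max\{q,2p\}<\tfrac{dp}2$ when $d\ge5$) and running the proof of the second estimate in Corollary~\ref{C:s deriv} — via Lemma~\ref{L:Fract chain rule} if $p\ge1$, $s\le1$, and via Lemma~\ref{L:product rule} plus Lemma~\ref{L:FDFP} to cover $s\in(1,s_c)$ and/or $p<1$ — but with $u\in L^{q_1}_x$ in place of Sobolev embedding on the $\|u\|^p$ factor, yields $\||\nabla|^s F(u)\|_{L^\infty_t L^r_x}\lesssim\||\nabla|^s u\|_{L^\infty_t L^2_x}\|u\|_{L^\infty_t L^{q_1}_x}^{p}$ with $\tfrac1r=\tfrac12+\tfrac p{q_1}>\tfrac{d+4}{2d}$, so $r<\tfrac{2d}{d+4}$.

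The main obstacle is the exponent bookkeeping underlying \eqref{step 1}: one must push $q$ strictly below the critical exponent $\tfrac{dp}2$ — needed for the high-frequency Bernstein sum to converge — while keeping it close enough to $q_0$ for the interpolated low-frequency bound to remain summable, and these two windows meet only because the Gronwall argument produces a genuine gain $\theta>0$, which in turn requires the decay $\gamma_1>0$ of the recurrence weights, i.e. $\alpha(d)<\min\{1,p\}$ from \eqref{sc}. The remaining points — that Lemma~\ref{L:recurrence} really does take acausal-Gronwall form with $\eta$-independent constants, and that the fractional chain rule can be run uniformly over $0\le s\le s_c$ when $s_c>1$ and, for $d\ge7$, $F'$ is merely Hölder continuous — are routine given the machinery of Section~\ref{S:Notations}.
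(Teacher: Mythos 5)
Your proposal is correct and follows the paper's own route: Lemma~\ref{L:recurrence} fed into the acausal Gronwall inequality gives $A(N)\lesssim_u (N/N_0)^\theta$ for some $\theta>0$, interpolating this against the Bernstein bound $\|u_N\|_{L_t^\infty L_x^2}\lesssim_u N^{-s_c}$ and summing over frequencies yields \eqref{step 1}, and rerunning the fractional chain/product rules (Lemmas~\ref{L:Fract chain rule}, \ref{L:product rule}, \ref{L:FDFP}) with the broken-scaling $L_t^\infty L_x^q$ bound in place of Sobolev embedding on the lower-order factor yields \eqref{breach 2}. As a minor remark, your formulation ``$A(N)\lesssim_u(N/N_0)^\theta$ for some $\theta>0$'' is in fact the careful one: since $s_c>1$ forces $\alpha(d)<p/2<\gamma_1$, the $\gamma$ in Lemma~\ref{L:Gronwall} equals $\alpha(d)$ and the Gronwall output is only $\|u_N\|_{L_t^\infty L_x^{2d/(d-2-p)}}\lesssim_u N^{2\alpha(d)-}$ rather than the $N^{p-}$ recorded in the paper's \eqref{breach 1}, and your observation that the two constraints on the interpolation exponent meet precisely at $\theta=0$ is exactly why this weaker gain still lands $q$ strictly inside $(2,dp/2)$.
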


\begin{proof}
We first consider \eqref{step 1}.  We will only present the details for $d\geq 7$.  The treatment of $d=5,6$ is completely analogous.

Combining Lemma~\ref{L:recurrence} with Lemma~\ref{L:Gronwall}, we deduce
\begin{align}\label{breach 1}
\|u_N\|_{L_t^\infty L_x^{\frac{2d}{d-2-p}}}\lesssim_u N^{p-} \quad \text{for all} \quad N\leq 10 N_0.
\end{align}
In applying Lemma~\ref{L:Gronwall}, we set $N=10 \cdot 2^{-k} N_0$, $x_k=A(10 \cdot 2^{-k} N_0)$, and take $\eta$ sufficiently
small.  Note that $x_k\in \ell^\infty$ by virtue of \eqref{A bdd}.

By interpolation followed by \eqref{breach 1}, Bernstein, and \eqref{Hs bounded},
\begin{align*}
\|u_N\|_{L_t^\infty L_x^q}
&\lesssim \|u_N\|_{L_t^\infty L_x^{\frac{2d}{d-2-p}}}^{\frac{d(q-2)}{q(2+p)}} \|u_N\|_{L_t^\infty L_x^2}^{\frac{2d-q(d-2-p)}{q(2+p)}}
\lesssim_u N^{\frac{dp(q-2)}{q(2+p)}-} N^{-(\frac d2-\frac2p)\frac{2d-q(d-2-p)}{q(2+p)}}
\end{align*}
for all $N\leq 10 N_0$.  Note that the power of $N$ appearing in the formula above is positive for
$$
\tfrac{2d(2p^2+ dp-4)}{dp^2 + (d^2-2d+4)p-4(d-2)}<q<\tfrac{dp}2.
$$
Thus, for these values of $q$, Bernstein together with \eqref{Hs bounded} yield
\begin{align*}
\|u\|_{L_t^\infty L_x^q} \leq \|u_{\leq N_0}\|_{L_t^\infty L_x^q} + \|u_{> N_0}\|_{L_t^\infty L_x^q}
\lesssim_u \sum_{N\leq N_0} N^{0+} + \sum_{N>N_0} N^{\frac2p-\frac dq} \lesssim_{u} 1,
\end{align*}
which completes the proof of \eqref{step 1}.

We turn now to \eqref{breach 2}.  For $0\leq s\leq 1$, we use Lemma~\ref{L:Fract chain rule} to estimate
\begin{align*}
\bigl\| |\nabla|^s  F(u)\bigr\|_{L_t^\infty L_x^r}
\lesssim \bigl\| |\nabla|^s u\bigr\|_{L_t^\infty L_x^2} \|u\|_{L_t^\infty L_x^{\frac{2rp}{2-r}}}^p.
\end{align*}
We will return to this shortly, but first we consider the case $1<s\leq s_c$.  Using Lemma~\ref{L:product rule} together with Sobolev embedding,
\begin{align*}
\bigl\| |\nabla|^s  F(u)\bigr\|_{L_t^\infty L_x^r}
&\lesssim \bigl\| |\nabla|^{s-1} \bigl(\nabla u \cdot F'(u)\bigr)\bigr\|_{L_t^\infty L_x^r}\\
&\lesssim \bigl\| |\nabla|^{s} u\bigr\|_{L_t^\infty L_x^2} \|u\|_{L_t^\infty L_x^{\frac{2rp}{2-r}}}^p
    +\|\nabla u\|_{L_t^\infty L_x^{\frac{dp}{2+p}}}
        \bigl\| |\nabla|^{s-1} F'(u)\bigr\|_{L_t^\infty L_x^{\frac{dpr}{dp-r(2+p)}}}\\
&\lesssim \bigl\| |\nabla|^{s} u\bigr\|_{L_t^\infty L_x^2} \Bigl(\|u\|_{L_t^\infty L_x^{\frac{2rp}{2-r}}}^p
        +\bigl\| |\nabla|^{s-1} F'(u)\bigr\|_{L_t^\infty L_x^{\frac{dpr}{dp-r(2+p)}}}\Bigr).
\end{align*}
For $p\geq 1$, we invoke Lemma~\ref{L:Fract chain rule} and then use Sobolev embedding to estimate
\begin{align*}
\bigl\| |\nabla|^{s-1} F'(u)\bigr\|_{L_t^\infty L_x^{\frac{dpr}{dp-r(2+p)}}}
&\lesssim \bigl\| |\nabla|^{s-1}u \bigr\|_{L_t^\infty L_x^{\frac{2d}{d-2}}} \|u\|_{L_t^\infty L_x^{\frac{2dpr(p-1)}{2dp-dpr-4r}}}^{p-1}\\
&\lesssim \bigl\| |\nabla|^{s}u \bigr\|_{L_t^\infty L_x^2} \|u\|_{L_t^\infty L_x^{\frac{2dpr(p-1)}{2dp-dpr-4r}}}^{p-1}.
\end{align*}
If instead $p<1$, we use Lemma~\ref{L:FDFP} and Sobolev embedding to obtain
\begin{align*}
\bigl\||\nabla|^{s-1} F'(u)\bigr\|_{L_t^\infty L_x^{\frac{dpr}{dp-r(2+p)}}}
&\lesssim \bigl\| |\nabla|^\sigma u \bigr\|_{L_t^\infty L_x^{\frac{dp}{2+\sigma p}}}^{\frac{s-1}\sigma}
    \|u\|_{L_t^\infty L_x^{\frac{dpr(\sigma p + 1-s)}{dp\sigma-\sigma prs-2r(s-1)-2\sigma r}}}^{p-\frac{s-1}\sigma}\\
&\lesssim \bigl\| |\nabla|^{s} u \bigr\|_{L_t^\infty L_x^2}^{\frac{s-1}\sigma}
    \|u\|_{L_t^\infty L_x^{\frac{dpr(\sigma p + 1-s)}{dp\sigma-\sigma prs-2r(s-1)-2\sigma r}}}^{p-\frac{s-1}\sigma}
\end{align*}
for some $\frac{s-1}p<\sigma<1$.

For all $0\leq s\leq s_c$, to obtain \eqref{breach 2} it thus suffices to note that for $r<\frac{2d}{d+4}$ the exponents
$\frac{2rp}{2-r}$, $\frac{2dpr(p-1)}{2dp-dpr-4r}$, $\frac{dpr(\sigma p + 1-s)}{dp\sigma-\sigma prs-2r(s-1)-2\sigma r}$
are all less than $\frac{dp}2$ and converge to this number as $r\to \frac{2d}{d+4}$.  Thus, \eqref{step 1} yields the claim.
\end{proof}

Following \cite{Berbec}, our second step is to use the double Duhamel trick to upgrade \eqref{step 1} to a negative regularity statement
in $L_x^2$-based Sobolev spaces.  The arguments we present follow closely the ones in \cite{Berbec}.  For the sake of completeness, we
present the details below.

\begin{proposition}[Some negative regularity]\label{P:some -reg}
Let $d\geq 5$ and assume the critical regularity $s_c$ obeys \eqref{sc}.  Let $u$ be as in Theorem~\ref{T:-reg}.
Assume further that $|\nabla|^s F(u) \in L_t^\infty L_x^r$ for some $r<\tfrac{2d}{d+4}$ and some $0\leq s\leq s_c$.
Then there exists $s_0=s_0(d,r)>0$ such that $u \in L_t^\infty \dot H^{s-s_0+}_x$.
\end{proposition}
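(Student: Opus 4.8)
The plan is to run the double Duhamel trick exactly as in \cite{Berbec}, using the spatial decay encoded in the hypothesis $|\nabla|^s F(u)\in L^\infty_t L^r_x$ with $r<\tfrac{2d}{d+4}$ to force convergence of the resulting double time-integral. Write $\beta:=d(\tfrac1r-\tfrac12)$; the assumption $r<\tfrac{2d}{d+4}$ says precisely that $\beta>2$, and this is the source of all the gain. Two consequences of Bernstein will be used throughout: for every dyadic $N$ and every $t$,
\[
\|P_N F(u(t))\|_{L^r_x}\lesssim N^{-s}\bigl\||\nabla|^s F(u)\bigr\|_{L^\infty_t L^r_x}\lesssim_u N^{-s},\qquad \|P_N F(u(t))\|_{L^2_x}\lesssim N^{\beta}\|P_N F(u(t))\|_{L^r_x}\lesssim_u N^{\beta-s}.
\]

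First I would split $u=P_{\le1}u+P_{>1}u$. Once the positive number $s_0$ below is fixed, one has $s-s_0<s\le s_c$, so the high frequencies are harmless: $\sum_{N>1}N^{2(s-s_0+\eps)}\|P_Nu\|_{L^2_x}^2\le\sum_{N>1}N^{2s_c}\|P_Nu\|_{L^2_x}^2\lesssim\bigl\||\nabla|^{s_c}u\bigr\|_{L^\infty_tL^2_x}^2<\infty$ by \eqref{Hs bounded}, provided $\eps\le s_c-s+s_0$. So everything reduces to estimating $\|P_Nu\|_{L^\infty_tL^2_x}$ for dyadic $N\le1$. Fix such an $N$. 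Using the two ``no-waste'' Duhamel formulae of Lemma~\ref{L:duhamel} (the forward one for one factor, the backward one for the other) together with the fact that $P_N$-localized functions have comparable $\dot H^{s_c}_x$ and $L^2_x$ norms (so the weak limits of Lemma~\ref{L:duhamel} are also weak limits in $L^2_x$), I would pair the two representations and, after truncating the time-integrals to finite intervals and letting the truncation parameters tend to infinity, arrive at
\[
\|P_Nu(t)\|_{L^2_x}^2\le\int_t^\infty\!\!\int_{-\infty}^t\bigl|\bigl\langle P_N F(u(t')),\,e^{i(t'-t'')\Delta}P_N F(u(t''))\bigr\rangle\bigr|\,dt''\,dt',
\]
where I have used $\langle e^{i(t-t')\Delta}f,e^{i(t-t'')\Delta}g\rangle=\langle f,e^{i(t'-t'')\Delta}g\rangle$ and the fact that $t'-t''>0$ on the domain of integration.

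Next I would estimate the integrand in two regimes. For $|t'-t''|\le N^{-2}$, Cauchy--Schwarz, unitarity of $e^{i\tau\Delta}$ on $L^2_x$, and the second Bernstein bound give $|\langle\cdots\rangle|\lesssim_u N^{2(\beta-s)}$; integrating this constant over $\{t'>t>t'',\ t'-t''\le N^{-2}\}$, whose measure is $\tfrac12N^{-4}$, contributes $\lesssim_u N^{2\beta-2s-4}$. For $|t'-t''|>N^{-2}$, H\"older with exponents $r,r'$ and the dispersive estimate \eqref{dispersive-p} give $|\langle\cdots\rangle|\lesssim_u|t'-t''|^{-\beta}N^{-2s}$; since $\beta>2$, $\iint_{\{t'>t>t'',\,t'-t''>N^{-2}\}}|t'-t''|^{-\beta}\,dt''\,dt'=\int_{N^{-2}}^\infty\rho^{1-\beta}\,d\rho\sim N^{2\beta-4}$, so this regime also contributes $\lesssim_u N^{2\beta-2s-4}$. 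Hence $\|P_Nu\|_{L^\infty_tL^2_x}\lesssim_u N^{\beta-s-2}$ for all dyadic $N\le1$.

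Finally, set $s_0:=\beta-2=d(\tfrac1r-\tfrac12)-2$, which is strictly positive and depends only on $d,r$ precisely because $r<\tfrac{2d}{d+4}$. Then for any $\eps\in(0,s_c-s+s_0]$,
\[
\sum_{N\le1}N^{2(s-s_0+\eps)}\|P_Nu\|_{L^\infty_tL^2_x}^2\lesssim_u\sum_{N\le1}N^{2(s-s_0+\eps)+2(\beta-s-2)}=\sum_{N\le1}N^{2\eps}<\infty,
\]
which together with the high-frequency bound yields $u\in L^\infty_t\dot H^{s-s_0+\eps}_x$, i.e.\ $u\in L^\infty_t\dot H^{s-s_0+}_x$. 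The one delicate point is not any individual estimate but the legitimacy of the double Duhamel pairing: one must justify exchanging the two weak limits of Lemma~\ref{L:duhamel} with the double time-integral. This is handled, as in \cite{Berbec, tao:attractor}, by truncating both Duhamel integrals to finite intervals — where the pairing is an honest $L^2_x$ inner product and Fubini applies — and then passing to the limit using exactly the absolute integrability established in the two-regime estimate; a secondary subtlety worth flagging is that the frequency localization is what rescues the short-time regime, where the dispersive decay $|t'-t''|^{-\beta}$ fails to be integrable near the diagonal.
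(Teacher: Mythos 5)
Your argument is correct and follows essentially the same route as the paper's own proof: frequency localization, Bernstein on the $\dot H^{s_c}$ bound to handle high frequencies, the double Duhamel pairing for low frequencies, estimation of the kernel in the two regimes $|t'-t''|\lessgtr N^{-2}$ via Bernstein (short times) and the dispersive estimate (long times), and integration using $\beta>2$ to arrive at $s_0=\beta-2=\tfrac{d}{r}-\tfrac{d+4}{2}$. The only cosmetic difference is that the paper packages the two regimes as $\min\{|t-\tau|^{-1},N^2\}^{\beta}$ rather than splitting the integral explicitly, and sums $N$-localized pieces at the level of norms rather than squared norms; the substance is identical, down to the flagged subtlety about truncating the Duhamel integrals before passing to the weak limits.
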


\begin{proof}
The proposition will follow once we establish
\begin{align}\label{neg reg 0}
\bigl\| |\nabla|^s u_N \bigr\|_{L_t^\infty L_x^2} \lesssim_u N^{s_0} \quad \text{for all} \quad N>0 \quad \text{and} \quad
s_0:=\tfrac{d}r-\tfrac{d+4}2>0.
\end{align}
Indeed, by Bernstein combined with \eqref{Hs bounded},
\begin{align*}
\bigl\| |\nabla|^{s-s_0+} u \bigr\|_{L_t^\infty L_x^2}
&\leq \bigl\| |\nabla|^{s-s_0+} u_{\leq 1} \bigr\|_{L_t^\infty L_x^2} + \bigl\| |\nabla|^{s-s_0+} u_{>1} \bigr\|_{L_t^\infty L_x^2}\\
&\lesssim_u \sum_{N\leq 1} N^{0+} + \sum_{N>1} N^{(s-s_0+)-s_c}\\
&\lesssim_u 1.
\end{align*}

Thus, we are left to prove \eqref{neg reg 0}.  By time-translation symmetry, it suffices to prove
\begin{align}\label{neg reg 1}
\bigl\| |\nabla|^s u_N(0) \bigr\|_{L_x^2} \lesssim_u N^{s_0} \quad \text{for all} \quad N>0 \quad \text{and} \quad
s_0:=\tfrac{d}r-\tfrac{d+4}2>0.
\end{align}
Using the Duhamel formula \eqref{Duhamel} both in the future and in the past, we write
\begin{align*}
\bigl\| |\nabla|^s & u_N(0) \bigr\|_{L_x^2}^2 \\
&= \lim_{T\to\infty} \lim_{T'\to-\infty} \bigl \langle i\int_0^T e^{-it\Delta}P_N |\nabla|^s F(u(t))\, dt,
    -i\int_{T'}^0 e^{-i\tau\Delta}P_N |\nabla|^s F(u(\tau))\, d\tau  \bigr\rangle\\
&\leq  \int_0^{\infty} \int_{-\infty}^0 \Bigl| \bigl\langle P_N |\nabla|^s F(u(t)) ,
    e^{i(t-\tau)\Delta}P_N |\nabla|^s F(u(\tau))  \bigr\rangle \Bigr| \,dt\, d\tau.
\end{align*}
We estimate the term inside the integrals in two ways.  On one hand, using H\"older and the dispersive estimate,
\begin{align*}
\Bigl|\bigl\langle P_N |\nabla|^s F(u(t)) ,  e^{i(t-\tau)\Delta} & P_N |\nabla|^s F(u(\tau))  \bigr\rangle\Bigr|\\
&\lesssim \bigl\| P_N |\nabla|^s F(u(t))\bigr\|_{L_x^r} \bigl\| e^{i(t-\tau)\Delta} P_N |\nabla|^s F(u(\tau))\bigr\|_{L_x^{r'}}\\
&\lesssim |t-\tau|^{\frac d2-\frac dr} \bigl\| |\nabla|^s F(u)\bigr\|_{L_t^\infty L_x^r}^2.
\end{align*}
On the other hand, using Bernstein,
\begin{align*}
\Bigl|\bigl\langle P_N |\nabla|^s F(u(t)) ,  e^{i(t-\tau)\Delta} & P_N |\nabla|^s F(u(\tau))  \bigr\rangle\Bigr|\\
&\lesssim \bigl\| P_N |\nabla|^s F(u(t))\bigr\|_{L_x^2} \bigl\| e^{i(t-\tau)\Delta} P_N |\nabla|^s F(u(\tau))\bigr\|_{L_x^2}\\
&\lesssim N^{2(\frac d2-\frac dr)} \bigl\| |\nabla|^s F(u)\bigr\|_{L_t^\infty L_x^r}^2.
\end{align*}
Thus,
\begin{align*}
\bigl\| |\nabla|^s u_N(0) \bigr\|_{L_x^2}^2
&\lesssim \bigl\| |\nabla|^s F(u)\bigr\|_{L_t^\infty L_x^r}^2 \int_0^\infty \int_{-\infty}^0 \min\{ |t-\tau|^{-1} , N^2 \}^{\frac dr -\frac d2} \, dt\, d\tau\\
&\lesssim N^{2s_0} \bigl\| |\nabla|^s F(u)\bigr\|_{L_t^\infty L_x^r}^2.
\end{align*}
To obtain the last inequality we used the fact that $\tfrac dr -\tfrac d2 >2$ since $r<\tfrac{2d}{d+4}$. Thus \eqref{neg reg 1}
holds; this finishes the proof of the proposition.
\end{proof}

The proof of Theorem~\ref{T:-reg} will follow from iterating Proposition~\ref{P:some -reg} finitely many times.

\begin{proof}[Proof of Theorem~\ref{T:-reg}]
Proposition~\ref{P:L^p breach} allows us to apply Proposition~\ref{P:some -reg} with $s=s_c$. We conclude that $u \in L_t^\infty
\dot H^{s_c-s_0+}_x$ for some $s_0=s_0(d,r)>0$.  Combining this with \eqref{breach 2}, we deduce that
$|\nabla|^{s_c-s_0+} F(u) \in L_t^\infty L_x^r$ for some $r<\tfrac{2d}{d+4}$.  We are thus in the position to apply Proposition~\ref{P:some -reg}
again and obtain $u \in L_t^\infty \dot H^{s_c-2s_0+}_x$.  Iterating this procedure finitely many times, we derive
$u\in L_t^\infty \dot H_x^{-\eps}$ for any $0<\eps<s_0$.

This completes the proof of Theorem~\ref{T:-reg}.
\end{proof}

To prove the equivalent of Theorem~\ref{T:-reg} in the setting of Theorem~\ref{T:main cubic}, one repeats the arguments presented above with
\begin{equation*}
A(N):= N^{\frac{d-5}2} \sup_{t\in \R} \|u_N(t)\|_{L_x^{\frac{2d}{d-3}}}.
\end{equation*}

%
%
%
%

\section{The low-to-high frequency cascade}\label{S:cascade}

In this section, we use the negative regularity proved in the previous section to preclude low-to-high frequency cascade
solutions.

\begin{theorem}[Absence of cascades]\label{T:no cascade}
In the settings of Theorems~\ref{T:main cubic} and \ref{T:main} there are no solutions to \eqref{nls}
that are low-to-high frequency cascades in the sense of Theorem~\ref{T:enemies}.
\end{theorem}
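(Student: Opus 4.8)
The plan is to reduce this to conservation of mass, used in concert with the negative regularity established in Section~\ref{S:neg}. By Theorem~\ref{T:enemies}, a low-to-high frequency cascade is a global solution $u$ to \eqref{nls} that is almost periodic modulo symmetries, obeys $u\in L_t^\infty\dot H^{s_c}_x$ and $S_\R(u)=\infty$, and whose frequency scale function satisfies $\inf_{t\in\R}N(t)\geq1$ and $\limsup_{t\to+\infty}N(t)=\infty$. Such a $u$ meets the hypotheses of Theorem~\ref{T:-reg} in the setting of Theorem~\ref{T:main} (respectively, of its cubic analogue described at the end of Section~\ref{S:neg} in the setting of Theorem~\ref{T:main cubic}), so $u\in L_t^\infty\dot H^{-\eps}_x$ for some $\eps=\eps(d)>0$; in particular $u\in L_t^\infty L^2_x$. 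Since $u$ is a solution lying uniformly in $L^2_x\cap\dot H^{s_c}_x$, conservation of mass applies, so $M_0:=\|u(t)\|_{L^2_x}^2$ is finite and independent of $t$.

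The goal is to show $M_0=0$. Suppose not. Using $\limsup_{t\to+\infty}N(t)=\infty$, pick $t_n\to+\infty$ with $N(t_n)\to\infty$, and let $c(\cdot)$ be the function furnished by Remark~\ref{R:c small}. Fix $\delta\in(0,1)$ and $\eta>0$. For $n$ large enough that $c(\eta)N(t_n)>\delta$, decompose frequency space as $\{|\xi|\leq\delta\}\cup\{\delta<|\xi|\leq c(\eta)N(t_n)\}\cup\{|\xi|>c(\eta)N(t_n)\}$ and estimate the three resulting pieces of $M_0=\int_{\R^d}|\hat u(t_n,\xi)|^2\,d\xi$ separately. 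On the first region $|\xi|^{-2\eps}\geq\delta^{-2\eps}$, so the contribution is at most $\delta^{2\eps}\bigl\||\nabla|^{-\eps}u\bigr\|_{L_t^\infty L^2_x}^2\lesssim_u\delta^{2\eps}$. On the second region $|\xi|^{2s_c}\geq\delta^{2s_c}$, so Remark~\ref{R:c small} bounds the contribution by $\delta^{-2s_c}\int_{|\xi|\leq c(\eta)N(t_n)}|\xi|^{2s_c}|\hat u(t_n,\xi)|^2\,d\xi\leq\delta^{-2s_c}\eta$. On the third region $|\xi|^{2s_c}\geq(c(\eta)N(t_n))^{2s_c}$, so the contribution is at most $(c(\eta)N(t_n))^{-2s_c}\bigl\||\nabla|^{s_c}u\bigr\|_{L_t^\infty L^2_x}^2\lesssim_u(c(\eta)N(t_n))^{-2s_c}$. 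Altogether, $M_0\lesssim_u\delta^{2\eps}+\delta^{-2s_c}\eta+(c(\eta)N(t_n))^{-2s_c}$; sending $n\to\infty$, then $\eta\to0$, then $\delta\to0$ forces $M_0=0$, contrary to assumption. Hence $u\equiv0$, which contradicts $S_\R(u)=\infty$. This rules out the low-to-high frequency cascade in both settings, completing the proof.

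The substantive work here lies entirely upstream: everything hinges on Theorem~\ref{T:-reg}, i.e. on having already propagated the solution down into $L^2_x$, which is where $d\geq5$ and the double Duhamel argument of Section~\ref{S:neg} are genuinely used. Granting that, the only points that require care are the nesting of the three frequency scales—a fixed scale $\delta$, the scale $c(\eta)N(t_n)$ that must be sent to infinity, and the order of the limits $n\to\infty$, $\eta\to0$, $\delta\to0$—together with the observation that the dangerous middle band $\delta<|\xi|\leq c(\eta)N(t_n)$, which carries essentially all of the $\dot H^{s_c}_x$-norm along the sequence $t_n$, is precisely the band controlled by the compactness estimate of Remark~\ref{R:c small}.
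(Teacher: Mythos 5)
Your proof is correct and follows essentially the same strategy as the paper: invoke Theorem~\ref{T:-reg} to land in $L^2_x$, then use conservation of mass together with the compactness modulus, the $\dot H^{-\eps}_x$ and $\dot H^{s_c}_x$ bounds, and $N(t_n)\to\infty$ to force $M(u)=0$. The only (cosmetic) difference is that the paper combines the $\dot H^{-\eps}_x$ bound and the compactness bound on $\{|\xi|\leq c(\eta)N(t)\}$ in a single stroke via H\"older's inequality, obtaining $\eta^{\eps/(s_c+\eps)}$, whereas you introduce an auxiliary threshold $\delta$ to split that region in two, which avoids H\"older at the cost of a triple limit.
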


\begin{proof}
Suppose for a contradiction that there existed such a solution $u$.  Then, by the results proved in Section~\ref{S:neg}, $u\in L_t^\infty L_x^2$;
thus, by the conservation of mass,
$$
0\leq M(u)=M(u(t)) = \int_{\R^d} |u(t,x)|^2\, dx <\infty \quad \text{for all} \quad t\in \R.
$$

Fix $t\in \R$ and let $\eta>0$ be a small constant.  By compactness (see Remark~\ref{R:c small}),
$$
\int_{|\xi|\leq c(\eta)N(t)} |\xi|^{2s_c} |\hat u(t,\xi)|^2\, d\xi\leq \eta.
$$
On the other hand, as $u\in L_t^\infty \dot H^{-\eps}_x$ for some $\eps>0$,
$$
\int_{|\xi|\leq c(\eta)N(t)} |\xi|^{-2\eps} |\hat u(t,\xi)|^2\, d\xi\lesssim_u 1.
$$
Hence, by H\"older's inequality,
\begin{align}\label{close}
\int_{|\xi|\leq c(\eta)N(t)} |\hat u(t,\xi)|^2\, d\xi\lesssim_u \eta^{\frac \eps{s_c+\eps}}.
\end{align}

Meanwhile, by elementary considerations and $u\in L_t^\infty \dot H^{s_c}_x$,
\begin{align}\label{far}
\int_{|\xi|\geq c(\eta)N(t)} |\hat u(t,\xi)|^2\, d\xi
&\leq [c(\eta)N(t)]^{-2s_c} \int_{\R^d} |\xi|^{2s_c} |\hat u(t,\xi)|^2\, d\xi \notag \\
&\leq [c(\eta)N(t)]^{-2s_c} \bigl\||\nabla|^{s_c} u(t)\bigr\|^2_2 \notag\\
&\lesssim_u [c(\eta)N(t)]^{-2s_c}.
\end{align}

Collecting \eqref{close} and \eqref{far} and using Plancherel's theorem, we obtain
$$
0\leq M(u)\lesssim_u c(\eta)^{-2s_c} N(t)^{-2s_c} + \eta^{\frac \eps{s_c+\eps}}
$$
for all $t\in \R$.  As $u$ is a low-to-high cascade, there is a sequence of times $t_n\to\infty$ so that $N(t_n)\to \infty$.  As
$\eta>0$ is arbitrary, we may conclude $M(u)=0$ and hence $u$ is identically zero. This contradicts the fact that
$S_\R(u)=\infty$, thus settling Theorem~\ref{T:no cascade}.
\end{proof}

%
%
%
%

\section{The soliton}\label{S:Soliton}

In this section, we use the negative regularity proved in Section~\ref{S:neg} to preclude soliton-like solutions.

\begin{theorem}[Absence of solitons]\label{T:no soliton}
In the settings of Theorems~\ref{T:main cubic} and \ref{T:main} there are no global solutions to \eqref{nls}
that are solitons in the sense of Theorem~\ref{T:enemies}.
\end{theorem}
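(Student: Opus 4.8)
The plan is to argue by contradiction, following the scheme of \cite{Berbec}: suppose a soliton-like solution $u$ exists in the sense of Theorem~\ref{T:enemies}, so that $I=\R$, $N(t)\equiv 1$, $u$ is almost periodic modulo symmetries, and $S_\R(u)=\infty$ (in particular $u\not\equiv 0$). By Theorem~\ref{T:-reg} (and its analogue for the cubic quoted at the end of Section~\ref{S:neg}), $u\in L_t^\infty\dot H_x^{-\eps}$ for some $\eps=\eps(d)>0$; in particular $u\in L_t^\infty L_x^2$. Since $-\eps<\tfrac12<1<s_c$, interpolating the $\dot H_x^{-\eps}$ bound with the hypothesis $u\in L_t^\infty\dot H_x^{s_c}$ yields
\[
\|u\|_{L_t^\infty L_x^2}+\|u\|_{L_t^\infty\dot H_x^{1/2}}\lesssim_u 1 .
\]
By conservation of mass, $M(u):=\|u(t)\|_{L_x^2}^2$ is a finite, strictly positive constant.

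The next step is to upgrade the almost periodicity to the $L_x^2$ level. Since $N\equiv1$, the scaling symmetry plays no role, and Definition~\ref{D:ap} together with Remark~\ref{R:pot energy} shows that $\{u(t):t\in\R\}$ is precompact in $\dot H_x^{s_c}$ modulo spatial translations. Combining this with the uniform bound in $\dot H_x^{-\eps}$ and the interpolation inequality $\|f\|_{L_x^2}\lesssim\|f\|_{\dot H_x^{-\eps}}^{s_c/(s_c+\eps)}\,\|f\|_{\dot H_x^{s_c}}^{\eps/(s_c+\eps)}$ upgrades this to precompactness of $\{u(t):t\in\R\}$ in $L_x^2$ modulo translations. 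Because $M(u)>0$, precompactness forces the $L_x^2$-mass to stay trapped in a fixed-radius ball: there is $R_0=R_0(u)<\infty$ with
\[
\int_{|x-x(t)|\le R_0}|u(t,x)|^2\,dx\ \ge\ \tfrac12 M(u)\qquad\text{for all }t\in\R .
\]
(If not, there would be times $t_n$ and radii $R_n\to\infty$ with more than half the mass outside $\{|x-x(t_n)|\le R_n\}$, contradicting $L_x^2$-precompactness modulo translations.)

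Finally I would invoke the interaction Morawetz inequality for the defocusing equation \eqref{nls} in dimensions $d\ge5$ (see \cite{ckstt:gwp} and, for higher dimensions, \cite{Berbec} and the references therein). In the form needed here it reads
\[
\int_\R\!\int_{\R^d}\!\int_{\R^d}\frac{|u(t,x)|^2\,|u(t,y)|^2}{|x-y|^3}\,dx\,dy\,dt\ \lesssim\ \|u\|_{L_t^\infty L_x^2}^3\,\|u\|_{L_t^\infty\dot H_x^{1/2}}\ \lesssim_u\ 1 ,
\]
which is legitimate since $u\in L_t^\infty(L_x^2\cap\dot H_x^{1/2})$ (one runs the standard computation for smooth solutions and passes to the limit). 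On the other hand, restricting both spatial integrals to $\{|x-x(t)|\le R_0\}$, where $|x-y|\le 2R_0$, the mass lower bound gives
\[
\int_\R\!\int_{\R^d}\!\int_{\R^d}\frac{|u(t,x)|^2\,|u(t,y)|^2}{|x-y|^3}\,dx\,dy\,dt\ \ge\ (2R_0)^{-3}\int_\R\Bigl(\int_{|x-x(t)|\le R_0}|u(t,x)|^2\,dx\Bigr)^{2}dt\ \ge\ \tfrac14(2R_0)^{-3}M(u)^2\cdot|\R|\ =\ \infty ,
\]
a contradiction. The same argument applies verbatim in the setting of Theorem~\ref{T:main cubic}, since there $s_c=\tfrac{d-2}2>1>\tfrac12$ for $d\ge5$ and negative regularity was established as well.

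I expect the main obstacle to be the second step: transferring compactness of the orbit from the critical space $\dot H_x^{s_c}$ down to $L_x^2$, i.e. producing the uniform-in-time lower bound on the mass inside a fixed ball. This is precisely where the negative regularity of Section~\ref{S:neg} is indispensable — without it, the $\dot H_x^{s_c}$-localization of Definition~\ref{D:ap} says nothing about where the low-frequency $L_x^2$-mass is located, and the interaction Morawetz lower bound would collapse. The remaining ingredients (the interpolation inequalities and the invocation of the defocusing interaction Morawetz estimate for $L_t^\infty(L_x^2\cap\dot H_x^{1/2})$ solutions) are standard.
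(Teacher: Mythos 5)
Your argument is correct, but it takes a genuinely different route to the contradiction than the paper's. Both proofs rest on the same two ingredients -- the negative regularity from Section~\ref{S:neg} (which puts $u$ in $L_t^\infty H^1_x$ and legitimizes the interaction Morawetz inequality) and the interaction Morawetz bound itself -- but they diverge in how the lower bound is produced. The paper's route: convert the Morawetz estimate into $\bigl\||\nabla|^{-\frac{d-3}2}(|u|^2)\bigr\|_{L^2_{t,x}(I\times\R^d)}\lesssim_u1$, invoke \cite[Lemma~5.6]{thesis:art} to get $\bigl\||\nabla|^{-\frac{d-3}4}u\bigr\|_{L^4_{t,x}}\lesssim_u1$, interpolate against $L_t^\infty\dot H^1_x$ to obtain $\|u\|_{L_t^{d+1}L_x^{2(d+1)/(d-1)}(I\times\R^d)}\lesssim_u1$ uniformly in $I$, and then show $\|u(t)\|_{L_x^{2(d+1)/(d-1)}}\gtrsim_u1$ using only $\dot H^{s_c}_x$-precompactness of the orbit and $u\not\equiv0$. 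Your route is more elementary at the cost of one extra preparatory step: you upgrade the $\dot H^{s_c}_x$-precompactness of $\{u(t,\cdot+x(t))\}$ to $L^2_x$-precompactness via interpolation with the uniform $\dot H^{-\eps}_x$ bound (which is correct -- for $f_n\to f$ in $\dot H^{s_c}_x$ with $\sup_n\|f_n\|_{\dot H^{-\eps}_x}<\infty$, interpolation indeed gives $f_n\to f$ in $L^2_x$), extract a uniform-in-time mass lower bound in a fixed ball around $x(t)$, and then lower-bound the Morawetz integrand directly by restricting the spatial integrals to that ball. This avoids the square-function lemma and the $L_t^{d+1}L_x^q$ machinery entirely, at the expense of using the negative regularity twice (once for the Morawetz upper bound, once for the mass concentration). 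Both are clean; yours is slightly more self-contained, the paper's is slightly more modular. Two small remarks: (i) the form of the interaction Morawetz inequality you quote, $\lesssim\|u\|_{L_t^\infty L_x^2}^3\|u\|_{L_t^\infty\dot H^{1/2}_x}$, is not the one cited in the paper (\cite{matador,thesis:art} give $\lesssim\|u\|_{L_t^\infty L_x^2}^3\|\nabla u\|_{L_t^\infty L_x^2}$); since $u\in L_t^\infty H^1_x$ either version serves your purpose, but you should quote the one actually proved in your cited references; (ii) formally one should state the Morawetz bound on compact intervals $I$ and then let $|I|\to\infty$, rather than writing the estimate on $\R$ directly.
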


\begin{proof}
As usual, we will use a monotonicity formula to preclude soliton-like solutions.  Since we are in the defocusing case, we
will use the interaction Morawetz inequality introduced in \cite{CKSTT:interact}.  For the high dimensional case discussed here,
the details of this derivation can be found in \cite{matador} or \cite{thesis:art}.

To prove Theorem~\ref{T:no soliton}, we argue by contradiction.  We assume there exists a solution $u$ to \eqref{nls} which is a soliton
in the sense of Theorem~\ref{T:enemies}.  Then, by the negative regularity results proved in Section~\ref{S:neg}, $u\in L_t^\infty H_x^1$.
The interaction Morawetz inequality yields
\begin{align*}
\int_I \int_{\R^d}\int_{\R^d} \frac{|u(t,x)|^2 |u(t,y)|^2}{|x-y|^3}\, dx\, dy\, dt
\lesssim \|u\|_{L_t^\infty L_x^2(I\times\R^d)}^3 \|\nabla u\|_{L_t^\infty L_x^2(I\times\R^d)}
\lesssim_u 1
\end{align*}
for any compact time interval $I\subset \R$.  As in dimension $d$ convolution with $|x|^{-3}$ is basically the same as the
fractional integration operator $|\nabla|^{-(d-3)}$, the interaction Morawetz inequality yields
\begin{align*}
\bigl\| |\nabla|^{-\frac{d-3}2} (|u|^2) \bigr\|_{L^2_{t,x}(I \times \R^d)} \lesssim_u 1.
\end{align*}
By \cite[Lemma 5.6]{thesis:art}, this implies
\begin{align*}
\bigl\| |\nabla|^{-\frac{d-3}4} u \bigr\|_{L_{t,x}^4(I \times \R^d)} \lesssim_u 1.
\end{align*}
Interpolating between this estimate and the fact that $u\in L_t^\infty \dot H^1_x$, we derive
\begin{align}\label{Morawetz bdd}
\|u\|_{L_t^{d+1} L_x^{\frac{2(d+1)}{d-1}}(I \times \R^d)} \lesssim_u 1
\end{align}
for any compact time interval $I\subset \R$.

Next we claim that
\begin{align}\label{concentration}
\|u(t)\|_{L_x^{\frac{2(d+1)}{d-1}}}\gtrsim_u 1 \quad \text{uniformly for } t\in\R.
\end{align}
Otherwise, there exists a time sequence $t_n$ such that $u(t_n)$ converges weakly to zero in $L_x^{\frac{2(d+1)}{d-1}}$.
As $u(t)$ is uniformly bounded in $\dot H^{s_c}_x$, this implies that $u(t_n)$ converges weakly to zero in $\dot H^{s_c}_x$.
As the orbit of $u$ is precompact in $\dot H^{s_c}_x$ and $u$ is not identically zero, we derive a contradiction.

Using \eqref{Morawetz bdd} and \eqref{concentration}, we easily derive a contradiction by taking the interval $I$ to be sufficiently long.
\end{proof}

\end{document}